\urldef{\urluni}{\url}{http://www.mathematik.uni-kl.de/~wwwfktn}
\urldef{\emailfattler}{\url}{fattler@mathematik.uni-kl.de}
\urldef{\emailgrothaus}{\url}{grothaus@mathematik.uni-kl.de}
\urldef{\emailvosshall}{\url}{vosshall@mathematik.uni-kl.de}
\def\stackunder#1#2{\mathrel{\mathop{#2}\limits_{#1}}}
\makeatletter\@addtoreset{equation}{section}\makeatother
\theoremstyle{plain}      \newtheorem{theorem}{Theorem}[section]
                          \newtheorem{corollary}[theorem]{Corollary}
                          \newtheorem{proposition}[theorem]{Proposition}
													\newtheorem{condition}[theorem]{Condition}
\theoremstyle{remark}     \newtheorem{remark}[theorem]{Remark}
                          \newtheorem{lemma}[theorem]{Lemma}
\theoremstyle{definition} \newtheorem{definition}[theorem]{Definition}
\begin{document} 

\newcommand{\grad}{\nabla}
\newcommand{\D}{\partial}
\newcommand{\E}{\mathcal{E}}
\newcommand{\N}{\mathbb{N}}
\newcommand{\R}{\mathbb{R}_{\scriptscriptstyle{\ge 0}}}
\newcommand{\dom}{\mathcal{D}}
\newcommand{\ess}{\operatorname{ess~inf}}
\newcommand{\cem}{\operatorname{\text{\ding{61}}}}
\newcommand{\supp}{\operatorname{\text{supp}}}
\newcommand{\ca}{\operatorname{\text{cap}}}

\begin{titlepage}
\title{\Large Construction of a finite volume dynamical wetting model with $\delta$-pinning in (d+1)-dimension via Dirichlet forms}
\author{\normalsize\sc Torben Fattler\footnote{University of Kaiserslautern, P.O.Box 3049, 67653
Kaiserslautern, Germany.}~\thanks{\emailfattler}~\footnotemark[4] \and \normalsize\sc Martin Grothaus\footnotemark[1]~\thanks{\emailgrothaus}~\thanks{\urluni} 
 \and \normalsize\sc Robert Vo\ss hall\thanks{\emailvosshall}}
\date{\small\today}
\end{titlepage}
\maketitle

\pagestyle{headings}

\begin{abstract}
We give a Dirichlet form approach for the construction of a distorted Brownian motion in $E:=[0,\infty)^n$, $n\in\mathbb{N}$, where the behavior on the boundary is determined by the competing effects of reflection from and pinning at the boundary. The problem is formulated in an $L^2$-setting with underlying measure $\mu=\varrho\,m$. Here $\varrho$ is a positive density, integrable with respect to the measure $m$ and fulfilling the Hamza condition. The measure $m$ is such that the boundary $\partial E$ of $E$ is not of $m$-measure zero. A reference measure $\mu$ of this type is needed in order to give meaning to the so-called Wentzell boundary condition which is in literature typical for modeling such kind of boundary behavior. In providing a Skorokhod decomposition of the constructed process we are able to justify that the stochastic process is solving the underlying stochastic differential equation weakly in the sense of N.~Ikeda and Sh.~Watanabe for $\mathcal{E}$-quasi every starting point. At the boundary the constructed process indeed is governed by the competing effects of reflection and pinning. In order to obtain the Skorokhod decomposition we need $\varrho$ to be continuously differentiable on $E$, which is equivalent to continuity of the logarithmic derivative of $\varrho$. Furthermore, we assume that the logarithmic derivative of $\varrho$ is square integrable with respect to $\mu$. We do not need that the logarithmic derivative of $\varrho$ is Lipschitz continuous.
In particular, our considerations enable us to construct a dynamical wetting model (also known as Ginzburg--Landau dynamics) on a bounded set $D_{\scriptscriptstyle{N}}\subset \mathbb{Z}^d$ under mild assumptions on the underlying pair interaction potentials in all dimensions $d\in\mathbb{N}$. In dimension $d=2$ this model describes the motion of an interface resulting from wetting of a solid surface by a fluid.
\\\\
\thanks{\textbf{Mathematics Subject Classification 2010}. \textit{60K35, 82C41, 60J55, 47A07.}}\\
\thanks{\textbf{Keywords}: \textit{Interacting random processes, dynamics of random surfaces, local time and additive functionals, bilinear forms.}}
\end{abstract}

\section{Introduction}
In numerous physical situations it happens that different states of aggregation of a certain matter appear simultaneously. In particular, such phenomena occur at low temperature. Thereby, the transitions to different states of aggregation are separated by fairly sharp interfaces. The so-called $\nabla\phi$ interface model provides a fundamental mathematical model for the physical description of such interfaces from a microscopic or mesoscopic point of view. We are interested in the time development of such interfaces. In \cite{FuSpo97} the authors consider a scalar field $\phi^t$, $t\ge 0$, where its motion is governed by a reversible stochastic dynamics. I.e., in a finite volume $\Lambda\subset \mathbb{Z}^d$, $d\in\mathbb{N}$, under suitable boundary conditions, the scalar field $\phi^t:=\big(\phi_t(x)\big)_{x\in\Lambda}$, $t\ge 0$, is described by the stochastic differential equations
\begin{align*}
d\phi_t(x)=-\sum_{\stackunder{\scriptscriptstyle{|x-y|_{\scriptscriptstyle{\text{euc}}}=1}}{\scriptscriptstyle{y\in\Lambda}}}V'(\phi_t(x)-\phi_t(y))dt+\sqrt{2}\,dB_t(x),\quad x\in\Lambda,\quad t\ge 0.
\end{align*}
Here $|\cdot|_{\scriptscriptstyle{\text{euc}}}$ denotes the norm induced by the euclidean scalar product on $\mathbb{R}^d$, $V\in C^2(\mathbb{R})$ is a symmetric, strictly convex potential and $\big\{(B_t(x))_{t\ge 0}\,|\,x\in\Lambda\big\}$ are independent Brownian motions. Such a dynamics is known as the \emph{Ginzburg-Landau $\nabla\phi$ interface model in finite volume}.
Of particular interest in the framework of $\nabla\phi$ interface models is the so-called \emph{entropic repulsion}. Though one considers  the $\nabla\phi$ interface model with reflection on a hard wall. This phenomenon was investigated e.g.~in \cite{DeuGia00} and \cite{BDG01} for the static $\nabla\phi$ interface model. Interface motion with entropic repulsion, i.e., the Ginzburg-Landau $\nabla\phi$ interface model with entropic repulsion was studied recently in \cite{DeuNis07} for dimension $d\ge 2$. Here the underlying potentials are again symmetric, strictly convex and nearest neighbor $C^2$-pair potentials. The Ginzburg-Landau dynamics with repulsion was introduced by T.~Funaki and S.~Olla in \cite{Fu03, FuOl01}. In \cite{Za04} this problem was tackled via Dirichlet form techniques in dimension $d=1$.

In considering the $\nabla\phi$ interface model with reflection on a hard wall and additionally putting a pinning effect on that wall, we are dealing with the so-called \emph{wetting model}. In dimension $d=2$ this model describes the wetting of a solid surface by a fluid. The static wetting model was studied recently in \cite{DGZ05}, see also \cite{CaVe00}. Considerations of the Ginzburg-Landau dynamics with reflection on a hard wall under the influence of an outer force, causing e.g.~a mild pinning effect on the wall can be found in \cite{Fu03}. 

In \cite[Sect.~15.1]{Fu05} J.-D. Deuschel and T. Funaki investigated the scalar field $\phi^t:=\big(\phi_t(x)\big)_{x\in\Lambda}$, $t\ge 0$, described by the stochastic differential equations
\begin{multline}\label{sde}
d\phi_t(x)=-\mathbbm{1}_{(0,\infty)}\big(\phi_t(x)\big)\sum_{\stackunder{\scriptscriptstyle{|x-y|_{\scriptscriptstyle{\text{euc}}}=1}}{\scriptscriptstyle{y\in\Lambda}}}V'\big(\phi_t(x)-\phi_t(y)\big)\,dt\\
+\mathbbm{1}_{(0,\infty)}\big(\phi_t(x)\big)\sqrt{2}dB_t(x)+d\ell_{t}^{\scriptscriptstyle{0}}(x),\quad x\in\Lambda,
\end{multline}
subject to the conditions:
\begin{align*}
&\phi_t(x)\ge 0,\quad \ell_{t}^{\scriptscriptstyle{0}}(x)\mbox{ is non-decreasing with respect to }t,\quad \ell^{\scriptscriptstyle{0}}_{0}(x)=0,\\
&\int_0^\infty\phi_t(x)\,d\ell_{t}^{\scriptscriptstyle{0}}(x)=0,\\
&s\ell_{t}^{\scriptscriptstyle{0}}(x)=\int_0^t\mathbbm{1}_{\{0\}}\big(\phi_{\tau}(x)\big)\,d\tau\quad\mbox{for fixed }s\ge 0,\nonumber\\
\end{align*}
where $\ell_{t}^{\scriptscriptstyle{0}}(x)$ denotes the \emph{local time} of $\phi_t(x)\mbox{ at }0$ and the pair interaction potential $V$ is again symmetric, strictly convex and $C^2$.

For treating this system of stochastic differential equations the authors gave reference to classical solution techniques as developed e.g.~in \cite{WaIk89}. The methods provided therein require more restrictive assumptions on the drift part as in our situation (we only need that the drift part is continuous, not necessarily Lipschitz continuous), moreover, do not apply directly (the geometry differs). First steps in the direction of applying \cite{WaIk89} are discussed in \cite{Fu05} by J.-D. Deuschel and T. Funaki. First we use Dirichlet form techniques in order to construct solutions in the sense of the associated martingale problem for general Wentzell type boundary conditions. Then, by providing a Skorokhod decomposition for the constructed process, we can show that this process solves the stochastic differential equation 
\begin{multline}\label{repintro}
d\mathbf{X}_{t}^j=\mathbbm{1}_{\mathring{E}}\big(\mathbf{X}_t\big)\,\sqrt{2}\,dB^j_t+\partial_j\ln(\varrho)\big(\mathbf{X}_\tau\big)\mathbbm{1}_{\mathring{E}}\big(\mathbf{X}_\tau\big)\,dt\\
+\sum_{\varnothing\not=B\subsetneq I}\left\{\begin{array}{ll}
  \mathbbm{1}_{{E_+(B)}}\big(\mathbf{X}_t\big)\,\sqrt{2}\,dB^j_t+\partial_j\ln(\varrho)\big(\mathbf{X}_\tau\big)\mathbbm{1}_{{E_+(B)}}\big(\mathbf{X}_\tau\big)\,dt, & \text{if }j\in B\\
  \frac{1}{s}\,\mathbbm{1}_{{E_+(B)}}\big(\mathbf{X}_\tau\big)\,dt, & \text{if }j\in I\setminus B
  \end{array}\right.\\
+\frac{1}{s}\,\mathbbm{1}_{\{(0,\ldots,0)\}}\big(\mathbf{X}_\tau\big)\,dt
\end{multline}
weakly in the sense of N.~Ikeda and Sh.~Watanabe, see e.g.~\cite{WaIk89}, for $\mathcal{E}$-quasi every starting point. Here $E:=[0,\infty)^n$, $n\in\mathbb{N}$, is the state space, $j\in I:=\{1,\ldots,n\}$, $B\subsetneq I$, $E_+(B):=\big\{x\in E\,|\,x_i>0\text{ for all }i\in B\big\}\subset \partial E$, $(B^j_t)_{t\ge 0}$ are one dimensional Brownian motions with $B^j_0=0$, $j\in I$, and $\varrho$ is a positive, continuously differentiable density on $E$ such that $\sqrt{\varrho}$ is in the Sobolev space of weakly differentiable functions on $\mathring{E}$, square integrable together with their derivative. $\varrho$ continuously differentiable on $E$ is equivalent to the drift part $\big(\partial_j\ln(\varrho)\big)_{j\in I}$ being continuous. The stochastic differential equation (\ref{repintro}) can be rewritten as  
\begin{align*}
d\mathbf{X}_{t}^j=\mathbbm{1}_{(0,\infty)}\big(\mathbf{X}^j_t\big)\,\Big(\sqrt{2}\,dB^j_t+\partial_j\ln(\varrho)\big(\mathbf{X}_t\big)\,dt\Big)+\frac{1}{s}\,\mathbbm{1}_{\{0\}}\big(\mathbf{X}^j_t\big)\,dt,\quad j\in I,
\end{align*}
or equivalently
\begin{multline}\label{repintro2}
d\mathbf{X}_{t}^j=\mathbbm{1}_{(0,\infty)}\big(\mathbf{X}^j_t\big)\,\Big(\sqrt{2}\,dB^j_t+\partial_j\ln(\varrho)\big(\mathbf{X}_t\big)\,dt\Big)+d\ell_0^j\\
\text{with}\quad \ell_0^j:=\frac{1}{s}\int_0^t\mathbbm{1}_{\{0\}}\big(\mathbf{X}_{\tau}^j\big)\,d\tau,\quad j\in I.
\end{multline}
Hence we obtain a weak solution to (\ref{sde}) in the sense of N.~Ikeda and Sh.~Watanabe under rather mild assumption on the underlying pair interaction potentials.
 
A Skorokhod decomposition for reflected diffusions on bounded Lipschitz domains with singular non-reflection part was provided by G. Trutnau in \cite{Tru03}. Here we consider the case of Wentzell type boundary condition. 
As far as we know our considerations are the first with regard to construction and analysis of the dynamical wetting model in finite volume. Our approach is based on Dirichlet form techniques. Since we obtain a one to one correspondence to the functional analytic tool, as Dirichlet form and operator semigroup, we expect this method in future to be useful in view of studying scaling limits for the underlying system of stochastic differential equations as done in e.g.~\cite{Za04}. Dirichlet form methods in the context of Wentzell boundary condition were introduced in e.g.~\cite{VoVo03}. Here, however, in view of our application we construct via the underlying bilinear form a dynamics even on the boundary. In \cite{VoVo03} a static boundary behavior is realized. An overview of the state of the art in the framework of interface models is presented in e.g.~\cite{Ga02}, \cite{Fu05}.

Our paper is organized as follows. In Section \ref{sectfuana} we provide the functional analytical background to apply Dirichlet form methods in order to tackle the problem of the dynamical wetting model. We analyze the bilinear form (\ref{form1}) and prove in Theorem \ref{theosumdiri} that $\big(\mathcal{E},D(\mathcal{E})\big)$ is a conservative, strongly local, regular, symmetric Dirichlet form on $L^2\big(E;\mu_{n,s,\varrho}\big)$. In Section \ref{sectprocess} we present the probabilistic counterpart of Section \ref{sectfuana}. The main result of this section is obtained in Theorem \ref{theoprocess}, where we show that $\big(\mathcal{E},D(\mathcal{E})\big)$ has an associated conservative diffusion process $\mathbf{M}$, i.e., an associated strong Markov process with continuous sample paths and infinite life time. Since $\mathbf{M}$ solves the martingale problem, see Theorem \ref{theomartingale}, for the corresponding generator $\big(H,D(H)\big)$ it can be considered as the solution of a stochastic differential equation. The diffusion process $\mathbf{M}$ is analyzed in Section \ref{sectanapro}. Here we provide in Corollary \ref{coroskoro} a Skorokhod decomposition of $\mathbf{M}$. Finally, we apply our results to the problem of the dynamical wetting model, see Theorem \ref{theosumdiriapp}, Theorem \ref{theoprocessapp}, Theorem \ref{theomartingaleapp} and Corollary \ref{coroskoroapp}.

The following list of main results summarizes the progress achieved in this paper.
\begin{enumerate}
\item[(i)]
We construct conservative diffusion processes in $[0,\infty)^n$, $n\in\mathbb{N}$, with the competing effects of reflection and pinning at the boundary under mild assumptions on the drift part, see Theorem \ref{theoprocess} and Theorem \ref{theomartingale}.
\item[(ii)]
We provide a Skorokhod decomposition of the constructed process and thereby justify that the process solves the underlying stochastic differential equations weakly in the sense of N.~Ikeda and Sh.~Watanabe for $\mathcal{E}$-quasi every starting point. Moreover, we illustrate the behavior of the process at the boundary, see Corollary \ref{coroskoro}.
\item[(iii)]
Our general considerations apply to the construction of the dynamical wetting model in finite volume and all dimensions $d\in\mathbb{N}$ for a large class of pair interaction potentials, see Theorem \ref{theoprocessapp}, Theorem \ref{theomartingaleapp} and Corollary \ref{coroskoroapp}.
\end{enumerate}

\section{The functional analytical background}\label{sectfuana}
Let $n\in\mathbb{N}$, $I:=I_n:=\big\{1,\ldots,n\big\}$ and $E:=E_n:=[0,\infty)^n$. We have that $\mathring{E}=(0,\infty)^n$ and we denote by $\partial E$ the boundary of $E$. 
For each $x\in{E}$ we set
\begin{align*}
I_{\scriptscriptstyle{0}}(x):=\big\{i\in I\,\big|\,x_i=0\big\}\quad\mbox{and}\quad I_{\scriptscriptstyle{+}}(x):=\big\{i\in I\,\big|\,x_i>0\big\},
\end{align*}
and define for $A,B\subset I$,
\begin{align*}
{E}_{\scriptscriptstyle{0}}(A):=\Big\{x\in E\,\Big|\,I_{\scriptscriptstyle{0}}(x)=A\Big\}\quad\mbox{and}\quad {E}_{\scriptscriptstyle{+}}(B):=\Big\{x\in E\,\Big|\,I_{\scriptscriptstyle{+}}(x)=B\Big\},
\end{align*}
respectively.
\begin{remark}
We have the decomposition
\begin{align*}
{E}=\dot{\bigcup}_{A\subset I}{E}_{\scriptscriptstyle{0}}(A)=\dot{\bigcup}_{B\subset I}{E}_{\scriptscriptstyle{+}}(B). 
\end{align*}
In particular,
\begin{align*}
\partial E={E}\setminus \mathring{E}=\dot{\bigcup}_{\varnothing\not=A\subset I}{E}_{\scriptscriptstyle{0}}(A)=\dot{\bigcup}_{B\subsetneq I}{E}_{\scriptscriptstyle{+}}(B).
\end{align*}
\end{remark}
On $\big({E},\mathcal{B}_{\scriptscriptstyle{{E}}}\big)$ with $\mathcal{B}_{\scriptscriptstyle{{E}}}$ being the trace $\sigma$-algebra of the Borel $\sigma$-algebra $\mathcal{B}(\mathbb{R}^n)$ on ${E}$ we define for fixed $s\in(0,\infty)$ the measures 
\begin{align}\label{defmeasure}
m_{n,s}:=\sum_{B\subset I}\lambda_{\scriptscriptstyle{B}}^{n,s}\quad\text{with}\quad\lambda_{\scriptscriptstyle{B}}^{n,s}:=s^{n-\# B}\,\lambda_{\scriptscriptstyle{B}}^{\scriptscriptstyle{(n)}}\quad\text{and}\quad\lambda_{\scriptscriptstyle{B}}^{\scriptscriptstyle{(n)}}:=\prod_{i\in B}dx_{\scriptscriptstyle{+}}^i\prod_{j\in I\setminus B}d\delta^j_{\scriptscriptstyle{0}},
\end{align}
where $\#S$ gives the number of elements in a set $S$, $dx^i_{\scriptscriptstyle{+}}$ is the Lebesgue measure on\\ $\big([0,\infty),\mathcal{B}\big([0,\infty)\big)\big)$ and $\delta_{\scriptscriptstyle{0}}^j$ denotes the Dirac measure on $\big([0,\infty),\mathcal{B}\big([0,\infty)\big)\big)$ at $0$. The indices $i,j\in I$ give reference to the component of $x=(x_1,\ldots,x_n)\in E$ being integrated by $dx^i_{\scriptscriptstyle{+}}$ and $\delta^j_{\scriptscriptstyle{0}}$, respectively. 

\begin{condition}\label{conddensity}
$\varrho$ is a $m_{n,s}$-a.e. positive function on ${E}$ such that $\varrho\in L^{1}\big({E};m_{n,s}\big)$.
\end{condition}

\begin{remark}\label{remprobdensity}
In particular, $\varrho$ can be chosen to be a probability density.
\end{remark}
Under Condition \ref{conddensity} we define on $\big(E,\mathcal{B}_{\scriptscriptstyle{E}}\big)$ the measure $\mu_{n,s,\varrho}:=\varrho\,m_{n,s}$ and hence, the space of square integrable functions on $E$ with respect to $\mu_{n,s,\varrho}$, denoted by $L^2\big(E;\mu_{n,s,\varrho}\big)$. 

\begin{remark}\label{remBaire}
Note that the measure $\mu_{n,s,\varrho}$ on $\big(E,\mathcal{B}_{\scriptscriptstyle{E}}\big)$ is a \emph{Baire measures}. In our setting this means $\mu_{n,s,\varrho}$ is a Borel measure with the additional property that
\begin{align}\label{Baire}
\mu_{n,s,\varrho}(K)<\infty\quad\text{for all compact sets}\quad K\subset E.
\end{align}
(\ref{Baire}) is fulfilled, since $\varrho\in L^{1}\big(E;m_{n,s,\varrho}\big)$. Obviously, $E$ is locally compact and countable at infinity.
\end{remark}

We set
\begin{align*}
C^{{0}}_{{c}}\big(E\big):=\Big\{f:E\to\mathbb{R}\,\Big|\,f\mbox{ is continuous on }{E}
\mbox{ with }\supp(f)\subset E\mbox{ compact}\Big\},
\end{align*}
where $\supp$ denotes the support of the corresponding function and for $k\in\mathbb{N}$ we define
\begin{multline*}
C^{{k}}_{{c}}\big(E\big):=\Big\{f:E\to\mathbb{R}\,\Big|\,f\mbox{ is $k$-times continuously differentiable on }\mathring{E}\\
\mbox{ with }\supp(f)\subset E\mbox{ compact}\mbox{ and }\\
\partial^lf\mbox{ extends continuously to }E\mbox{ for $|l|\le k$}\Big\}.
\end{multline*}
Here and below $\partial^lf$ denotes the partial derivative of $f$ to the multi index $l\in\mathbb{N}^{{n}}_{\scriptscriptstyle{0}}$, i.e., 
\begin{multline*}
l=\big(l_1,\ldots,l_n\big)\in\mathbb{N}^{{n}}_{\scriptscriptstyle{0}},\quad |l|=l_1+\ldots+l_n,\\
\partial^lf:=\partial_1^{l_1}\partial_2^{l_2}\ldots\partial_n^{l_n}f,\quad\partial_i^{l_i}f:=\partial_{x_i}^{l_i}f,\quad \partial_{x_i}^0f:=f,\quad i\in I.
\end{multline*}
In case $k=1$ we write $\partial_i$ instead of $\partial^1_i$. Furthermore, $C_c^\infty({E}):=\bigcap_{k\in\mathbb{N}_{\scriptscriptstyle{0}}}C^k_c(E)$.

\begin{proposition}\label{propdensedef}
Under Condition \ref{conddensity} we have that $C_c^\infty\big(E\big)$ is dense in $L^2\big(E;\mu_{n,s,\varrho}\big)$.
\end{proposition}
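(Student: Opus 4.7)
The plan is to proceed in two classical stages: first approximate arbitrary $L^2$-functions by continuous functions of compact support, then smooth the latter by mollification.

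For the first stage, I would exploit Remark \ref{remBaire}, which ensures that $\mu_{n,s,\varrho}$ is a Borel measure on $E$ that is finite on compact sets, together with the fact that $E$ is locally compact and $\sigma$-compact. A measure of this type is automatically inner and outer regular, hence Radon. From here one applies a standard argument: bounded functions supported on a set of finite measure are dense in $L^2(E;\mu_{n,s,\varrho})$, and by regularity combined with Urysohn's lemma every such function can be approximated in $L^2$ by elements of $C_c^0(E)$.

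For the second stage, given $g\in C_c^0(E)$, I would extend $g$ continuously to a compactly supported $\tilde g\in C_c^0(\mathbb{R}^n)$ (e.g.\ via Tietze) and form $g_\varepsilon:=(\eta_\varepsilon\ast\tilde g)\big|_E$ with a standard Friedrichs mollifier $\eta_\varepsilon\in C_c^\infty(\mathbb{R}^n)$. Since $\eta_\varepsilon\ast\tilde g\in C^\infty(\mathbb{R}^n)$ with support contained in an $\varepsilon$-neighborhood of $\supp\tilde g$, the restriction $g_\varepsilon$ belongs to $C_c^\infty(E)$ in the sense defined in the excerpt (smooth on $\mathring{E}$ with all partial derivatives extending continuously to $E$, compact support in $E$). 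Uniform continuity of $\tilde g$ yields $g_\varepsilon\to g$ uniformly on $E$, and since all $\supp g_\varepsilon$ lie in a common compact set $K\subset E$ with $\mu_{n,s,\varrho}(K)<\infty$, the estimate
\begin{align*}
\|g_\varepsilon-g\|_{L^2(E;\mu_{n,s,\varrho})}^2\le\|g_\varepsilon-g\|_\infty^2\,\mu_{n,s,\varrho}(K)
\end{align*}
shows $L^2$-convergence. Combining the two stages proves the proposition.

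I do not expect any genuine obstacle here; the only point that deserves a moment of care is the second stage, where one has to confirm that the mollified extension really produces a function whose derivatives extend continuously up to the boundary $\partial E$ and whose support, after restriction, is still compact in $E$. Both properties are automatic because the convolution is $C^\infty$ on all of $\mathbb{R}^n$ and the dilated mollifier has arbitrarily small support, so no boundary-specific construction (such as reflection extension) is required.
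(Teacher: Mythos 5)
Your argument is correct, and its first stage coincides with the paper's: both reduce the problem, via Remark \ref{remBaire}, to the density of $C_c^0(E)$ in $L^2(E;\mu_{n,s,\varrho})$ (the paper simply quotes \cite[Coro.~7.5.5]{Bau81} for this, where you re-derive it from regularity and Urysohn), and both finish with the same estimate $\|g_\varepsilon-g\|_{L^2}^2\le\|g_\varepsilon-g\|_{\sup}^2\,\mu_{n,s,\varrho}(K)$ on a common compact support set of finite measure. Where you genuinely diverge is in how the uniform approximation of a $C_c^0(E)$-function by $C_c^\infty(E)$-functions is produced: the paper invokes the extended Stone--Weierstra\ss\ theorem (and then has to assume, essentially by inserting cutoffs, that the approximants share a common compact support), whereas you extend by Tietze to $C_c^0(\mathbb{R}^n)$ and mollify, restricting back to $E$. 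Your route is more constructive and gives the support control and the boundary regularity for free: since $\eta_\varepsilon\ast\tilde g$ is smooth on all of $\mathbb{R}^n$, its restriction trivially lies in $C_c^\infty(E)$ in the sense defined here (all derivatives extend continuously to $\partial E$), and the supports stay in a fixed compact neighbourhood of $\supp\tilde g$ intersected with $E$; no reflection or boundary-adapted construction is needed, exactly as you observe, because $E=[0,\infty)^n$ is closed in $\mathbb{R}^n$ so Tietze applies and closed bounded subsets of $E$ are compact. The Stone--Weierstra\ss\ route, on the other hand, is shorter to state and avoids convolution altogether, but it only yields sup-norm density and leaves the common-support issue to a ``without loss of generality'' remark. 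Both are legitimate; your version is, if anything, the more self-contained of the two.
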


\begin{proof}
Let $f\in L^2\big(E;\mu_{n,s,\varrho}\big)$. Due to Remark \ref{remBaire} we can apply \cite[Coro.~7.5.5]{Bau81} to obtain that $C_c^0\big(E\big)$ is dense in $L^2\big(E;\mu_{n,s,\varrho}\big)$. Hence there exists a sequence $(g_i)_{i\in\mathbb{N}}$ in $C_c^0\big(E\big)$ such that $\lim_{i\to\infty}\Vert f-g_i\Vert_{L^2(E;\mu_{n,s,\varrho})}=0$. The extended Stone--Weierstra\ss~theorem, see e.g.~\cite[Chap.~7, Sect.~38]{Sim63}, yields that $C_c^\infty\big(E\big)$ is dense in $C_c^0\big(E\big)$ with respect to $\Vert\cdot\Vert_{\sup}$, where $\Vert f\Vert_{\sup}:=\sup_{x\in E}|f(x)|$ for $f\in C_c^0\big(E\big)$. Thus to each $i\in\mathbb{N}$ there exists a sequences $\big(f^i_j\big)_{j\in\mathbb{N}}$ in $C_c^\infty\big(E\big)$ such that $\lim_{j\to\infty}\Vert g_i-f^i_j\Vert_{\sup}=0$. Without loss of generality we can assume that there exists a compact set $K_i\subset E$ such that $\text{supp}(g_i),\text{supp}({f}^{i}_j)\subset K_{i}$ for all $j\in\mathbb{N}$. Let $\varepsilon >0$. Then there exists $g_k\in C^0_c\big(E\big)$ such that $\Vert f-g_k\Vert_{L^2(E;\mu_{n,s,\varrho})}<\frac{1}{2}\sqrt{\varepsilon}$. Furthermore, there exists $f^{k}_{l}\in C_c^\infty\big(E\big)$ such that $\Vert g_k-f_{l}^k\Vert_{\sup}<\frac{1}{2}\sqrt{\frac{\varepsilon}{\mu_{n,s,\varrho}(K_k)}}$, where $\mu_{n,s,\varrho}(K_k)\in(0,\infty)$ by Condition \ref{conddensity}. Hence
\begin{multline*}
\big\Vert f-f^k_l\big\Vert_{L^2(E;\mu_{n,s,\varrho})}^2\le2\,\big\Vert f-g_k\big\Vert_{L^2(E;\mu_{n,s,\varrho})}^2+2\,\big\Vert g_k-f^k_l\big\Vert_{L^2(E;\mu_{n,s,\varrho})}^2\\
<\frac{\varepsilon}{2}+2\int_{E}\big(g_k-f^k_l\big)^2\,d\mu_{n,s,\varrho}\le \frac{\varepsilon}{2}+2\,\Vert g_k-f^k_l\Vert_{\sup}^2\cdot \mu_{n,s,\varrho}(K_k)<\frac{\varepsilon}{2}+\frac{\varepsilon}{2}=\varepsilon.
\end{multline*} 
Therefore, $C_c^\infty\big(E\big)$ is dense in $L^2\big(E;\mu_{n,s,\varrho}\big)$.
\end{proof}

\subsection{Dirichlet forms}
For fixed $n\in\mathbb{N}$, $s\in(0,\infty)$ and $\varrho$ fulfilling Condition \ref{conddensity} we define on $L^2\big(E;\mu_{n,s,\varrho}\big)$ the bilinear form
\begin{align}\label{form1}
\mathcal{E}(f,g):=\mathcal{E}^{n,s,\varrho}\big(f,g\big):=\sum_{\varnothing\not=B\subset I}\mathcal{E}_{\scriptscriptstyle{B}}(f,g),\quad f,g\in \mathcal{D}:=C_c^2\big(E\big),
\end{align}
with 
\begin{align*}
\mathcal{E}_{\scriptscriptstyle{B}}(f,g):=\mathcal{E}_{\scriptscriptstyle{B}}^{n,s,\varrho}\big(f,g\big):=\sum_{i\in B}\int_{E_{\scriptscriptstyle{+}}(B)}\partial_if\,\partial_i g\,d\mu_{\scriptscriptstyle{B}}^{\varrho,n,s},\quad\varnothing\not=B\subset I,
\end{align*}
where $\mu_{\scriptscriptstyle{B}}^{\varrho,n,s}:=\varrho\,\lambda_{\scriptscriptstyle{B}}^{n,s}$ (see (\ref{defmeasure})).
\begin{proposition}\label{propdense}
Suppose that Condition \ref{conddensity} is satisfied. Then $\big(\mathcal{E},\mathcal{D}\big)$ is a symmetric, positive definite bilinear form which is densely defined on $L^2\big(E;\mu_{n,s,\varrho}\big)$.
\end{proposition}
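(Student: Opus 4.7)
The proposition collects four routine properties of $(\mathcal{E},\mathcal{D})$. My plan is simply to verify them one by one, with a short preliminary step to check that $\mathcal{E}(f,g)$ is even well-defined (i.e.\ finite) on $\mathcal{D}=C_c^2(E)$.

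First I would handle well-definedness. Fix $f,g\in C_c^2(E)$ and $\varnothing\neq B\subset I$. By definition of $C_c^2(E)$ each $\partial_i f$ and $\partial_i g$ extends continuously to $E$ and has compact support, so $\partial_i f\,\partial_i g$ is bounded on $E$ with support contained in some compact set $K\subset E$. The measure $\mu_B^{\varrho,n,s}=\varrho\,\lambda_B^{n,s}$ satisfies $\mu_B^{\varrho,n,s}\le \mu_{n,s,\varrho}$ (since $m_{n,s}=\sum_B \lambda_B^{n,s}$), and by Remark \ref{remBaire} we have $\mu_{n,s,\varrho}(K)<\infty$. Hence $\mu_B^{\varrho,n,s}(K)<\infty$ and the integral $\int_{E_+(B)}\partial_i f\,\partial_i g\,d\mu_B^{\varrho,n,s}$ is finite. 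Summing over the finitely many $i\in B$ and over the finitely many $\varnothing\neq B\subset I$ shows $\mathcal{E}(f,g)\in\mathbb{R}$.

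Next, bilinearity and symmetry are immediate: each $\mathcal{E}_B$ is a finite sum of integrals of the form $\int \partial_i f\,\partial_i g\,d\mu_B^{\varrho,n,s}$, which is linear in each of $f,g$ by linearity of $\partial_i$ and of the integral, and symmetric because $\partial_i f\,\partial_i g=\partial_i g\,\partial_i f$. Summing over $B$ preserves both properties, so $(\mathcal{E},\mathcal{D})$ is a symmetric bilinear form. For positive (semi)definiteness, note that
\begin{equation*}
\mathcal{E}(f,f)=\sum_{\varnothing\neq B\subset I}\sum_{i\in B}\int_{E_+(B)}(\partial_i f)^2\,d\mu_B^{\varrho,n,s}\ge 0,
\end{equation*}
since every summand is the integral of a non-negative function against a non-negative measure.

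Finally, density of $\mathcal{D}$ in $L^2(E;\mu_{n,s,\varrho})$ follows directly from Proposition \ref{propdensedef}: we have $C_c^\infty(E)\subset C_c^2(E)=\mathcal{D}$, and $C_c^\infty(E)$ was shown to be dense in $L^2(E;\mu_{n,s,\varrho})$, hence so is the larger set $\mathcal{D}$. There is no real obstacle in this proof; the only point that requires any thought is the local finiteness of the component measures $\mu_B^{\varrho,n,s}$, which is handled once via the domination $\mu_B^{\varrho,n,s}\le \mu_{n,s,\varrho}$ together with the Baire property recorded in Remark \ref{remBaire}.
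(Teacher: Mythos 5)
Your proof is correct and follows essentially the same route as the paper: density is obtained from Proposition \ref{propdensedef} via the inclusion $C_c^\infty(E)\subset\mathcal{D}$, and symmetry together with positivity are read off directly from the definition of $\mathcal{E}$. The extra details you supply (finiteness of the integrals via $\mu_{\scriptscriptstyle{B}}^{\varrho,n,s}\le\mu_{n,s,\varrho}$ and Remark \ref{remBaire}) are a welcome elaboration of what the paper dismisses with ``follows directly by definition,'' but they do not constitute a different approach.
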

\begin{proof}
That the bilinear form is densely defined we obtain by Proposition \ref{propdensedef}. The rest follows directly by definition.
\end{proof}

To prove closability of the underlying bilinear form, we have to put an additional restriction on the density $\varrho$. For $\varnothing\not= B\subset I$ we define
\begin{align*}
R_\varrho\big(\overline{E_{\scriptscriptstyle{+}}(B)}\big):=\left\{x\in \overline{E_{\scriptscriptstyle{+}}(B)}\,\left|\,\int_{B_{\varepsilon}(x)}\varrho^{-1}\,d\lambda_{\scriptscriptstyle{B}}^{n,s}<\infty\quad\text{for some}\quad\varepsilon>0\right.\right\},
\end{align*}
where $B_{\varepsilon}(x):=\big\{y\in \overline{E_{\scriptscriptstyle{+}}(B)}\,\big|\,|x-y|_{\scriptscriptstyle{\text{euc}}}\le\varepsilon\big\}$ and $|\cdot|_{\scriptscriptstyle{\text{euc}}}$ denotes the norm induced by the euclidean scalar product on $\mathbb{R}^n$ and for $\varnothing\not=B\subset I$, $\overline{E_{\scriptscriptstyle{+}}(B)}$ is the closure of ${E_{\scriptscriptstyle{+}}(B)}$ with respect to $|\cdot|_{\scriptscriptstyle{\text{euc}}}$. 

\begin{condition}\label{condHamza}
For $\varnothing\not=B\subset I$ we have that $\varrho=0$~$\lambda_{\scriptscriptstyle{B}}^{n,s}$-a.e.~on $\overline{E_{\scriptscriptstyle{+}}(B)}\setminus R_\varrho\big(\overline{E_{\scriptscriptstyle{+}}(B)}\big)$.
\end{condition}

\begin{lemma}\label{lemMaRo}
Let Condition \ref{condHamza} be satisfied. For $\varnothing\not=B\subset I$ let $\varphi\in C^\infty_c\Big(R_\varrho\big(\overline{E_{\scriptscriptstyle{+}}(B)}\big)\Big)$ and $f\in L^2\big(E,\mu_{n,s,\varrho}\big)$.
\begin{enumerate}
\item[(i)]
There exists $C_1(\varphi,B)\in(0,\infty)$ such that
\begin{align*}
\Bigg|\int_{R_\varrho(\overline{E_{\scriptscriptstyle{+}}(B)})}f\varphi\,d\lambda_{\scriptscriptstyle{B}}^{n,s}\Bigg|\le C_1(\varphi,B)\cdot\Vert f\Vert_{L^2(\overline{E_{\scriptscriptstyle{+}}(B)};\mu_{\scriptscriptstyle{B}}^{\varrho,n,s})}.
\end{align*}
Here $L^2\big(\overline{E_{\scriptscriptstyle{+}}(B)};\mu_{\scriptscriptstyle{B}}^{\varrho,n,s}\big)$ denotes the spaces of square integrable functions on $\overline{E_{\scriptscriptstyle{+}}(B)}$ with respect to $\mu_{\scriptscriptstyle{B}}^{\varrho,n,s}$.
\item[(ii)]
There exists $C_2(\varphi,B)\in(0,\infty)$ such that
\begin{align*}
\Bigg|\int_{\partial R_\varrho(\overline{E_{\scriptscriptstyle{+}}(B)})}f\varphi\,d\sigma_{\scriptscriptstyle{B}}^{n,s}\Bigg|\le C_2(\varphi,B)\cdot\Vert f\Vert_{L^2(E;\mu_{\varrho,n,s})},
\end{align*}
where $\sigma_{\scriptscriptstyle{B}}^{n,s}:=\sum_{\hat{B}\subsetneq B}\lambda_{\scriptscriptstyle{\hat{B}}}^{n,s}$.
\end{enumerate}
\end{lemma}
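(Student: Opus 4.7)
Both (i) and (ii) reduce to a weighted Cauchy--Schwarz inequality: I would split
\[
f\varphi=\bigl(\sqrt{\varrho}\,f\bigr)\cdot\bigl(\varphi/\sqrt{\varrho}\bigr)
\]
so that one factor sits naturally in $L^{2}(\cdot\,;\,\varrho\,\lambda)$ and the other is a bounded ``test'' function, provided one can prove the local integrability $\int\varphi^{2}\varrho^{-1}\,d\lambda<\infty$.

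For (i), Cauchy--Schwarz gives
\[
\Bigl|\int_{R_\varrho(\overline{E_{+}(B)})}f\varphi\,d\lambda_{B}^{n,s}\Bigr|
\le\Bigl(\int f^{2}\varrho\,d\lambda_{B}^{n,s}\Bigr)^{1/2}\Bigl(\int_{\mathrm{supp}(\varphi)}\varphi^{2}\varrho^{-1}\,d\lambda_{B}^{n,s}\Bigr)^{1/2},
\]
and the first factor is $\|f\|_{L^{2}(\overline{E_{+}(B)};\mu_{B}^{\varrho,n,s})}$. To bound the second factor I would use compactness of $\mathrm{supp}(\varphi)\subset R_\varrho(\overline{E_{+}(B)})$: by definition of $R_\varrho$, every $x\in\mathrm{supp}(\varphi)$ admits some $\varepsilon_{x}>0$ with $\int_{B_{\varepsilon_{x}}(x)}\varrho^{-1}\,d\lambda_{B}^{n,s}<\infty$. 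Extracting a finite subcover of $\mathrm{supp}(\varphi)$ by such balls yields $\int_{\mathrm{supp}(\varphi)}\varrho^{-1}\,d\lambda_{B}^{n,s}<\infty$, and then
\[
C_{1}(\varphi,B):=\|\varphi\|_{\sup}\Bigl(\int_{\mathrm{supp}(\varphi)}\varrho^{-1}\,d\lambda_{B}^{n,s}\Bigr)^{1/2}
\]
does the job.

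For (ii) I would first decompose the boundary measure into its finitely many pieces, $\sigma_{B}^{n,s}=\sum_{\hat{B}\subsetneq B}\lambda_{\hat{B}}^{n,s}$, and apply the same weighted Cauchy--Schwarz on each $\lambda_{\hat{B}}^{n,s}$. The critical observation is that the first factor satisfies
\[
\int f^{2}\varrho\,d\lambda_{\hat{B}}^{n,s}\le\int f^{2}\,d\mu_{n,s,\varrho}=\|f\|_{L^{2}(E;\mu_{n,s,\varrho})}^{2},
\]
because $\mu_{n,s,\varrho}=\varrho\,m_{n,s}=\sum_{B'\subset I}\varrho\,\lambda_{B'}^{n,s}$ contains $\varrho\,\lambda_{\hat{B}}^{n,s}$ as a (nonnegative) summand. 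Summing over the finitely many $\hat{B}\subsetneq B$ and invoking the triangle inequality produces a bound of the required form with
\[
C_{2}(\varphi,B):=\|\varphi\|_{\sup}\sum_{\hat{B}\subsetneq B}\Bigl(\int_{\mathrm{supp}(\varphi)\cap\partial R_\varrho(\overline{E_{+}(B)})}\varrho^{-1}\,d\lambda_{\hat{B}}^{n,s}\Bigr)^{1/2}.
\]

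The main obstacle will be part (ii): although in (i) the local $\lambda_{B}^{n,s}$-integrability of $\varrho^{-1}$ on $\mathrm{supp}(\varphi)$ is exactly what the defining property of $R_\varrho(\overline{E_{+}(B)})$ provides, in (ii) one needs the analogous local integrability with respect to the lower-dimensional boundary measures $\lambda_{\hat{B}}^{n,s}$, $\hat{B}\subsetneq B$. My plan for this step is to apply Condition~\ref{condHamza} to each $\hat{B}$ in turn: on the face carrying $\lambda_{\hat{B}}^{n,s}$ the density $\varrho$ vanishes $\lambda_{\hat{B}}^{n,s}$-a.e.\ outside $R_\varrho(\overline{E_{+}(\hat{B})})$, so after discarding a $\lambda_{\hat{B}}^{n,s}$-null set one may assume $\mathrm{supp}(\varphi)$ meets only $R_\varrho(\overline{E_{+}(\hat{B})})$, where the same compactness and finite-subcover argument as in (i) finishes the estimate. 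All other steps are routine Cauchy--Schwarz bookkeeping.
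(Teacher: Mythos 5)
Your overall route is the paper's route: part (i) is exactly the weighted Cauchy--Schwarz plus finite-subcover argument behind the Hamza-type lemma which the paper simply cites (\cite[Chap.~2, Lemm.~2.2]{MR92}), and your scheme for (ii) --- decompose $\sigma_{\scriptscriptstyle{B}}^{n,s}=\sum_{\hat{B}\subsetneq B}\lambda_{\scriptscriptstyle{\hat{B}}}^{n,s}$, estimate facewise as in (i), and dominate each $\Vert f\Vert_{L^2(\overline{E_{\scriptscriptstyle{+}}(\hat{B})};\mu_{\scriptscriptstyle{\hat{B}}}^{\varrho,n,s})}$ by $\Vert f\Vert_{L^2(E;\mu_{n,s,\varrho})}$, since $\mu_{n,s,\varrho}$ contains $\varrho\,\lambda_{\scriptscriptstyle{\hat{B}}}^{n,s}$ as a summand --- is precisely the paper's ``multiple application of part (i)''. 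You even identify the point the paper leaves implicit: $\varphi$ is only supported in $R_\varrho\big(\overline{E_{\scriptscriptstyle{+}}(B)}\big)$, and membership of a point in $R_\varrho\big(\overline{E_{\scriptscriptstyle{+}}(B)}\big)$ controls $\varrho^{-1}$ only against $\lambda_{\scriptscriptstyle{B}}^{n,s}$, not against the lower-dimensional $\lambda_{\scriptscriptstyle{\hat{B}}}^{n,s}$, so the facewise constant is not automatic.

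However, your repair of that step contains a genuine error. Condition \ref{condHamza} says that $\varrho=0$ holds $\lambda_{\scriptscriptstyle{\hat{B}}}^{n,s}$-a.e.\ on $\overline{E_{\scriptscriptstyle{+}}(\hat{B})}\setminus R_\varrho\big(\overline{E_{\scriptscriptstyle{+}}(\hat{B})}\big)$; it does \emph{not} say that this set is $\lambda_{\scriptscriptstyle{\hat{B}}}^{n,s}$-null. So you may not ``discard a $\lambda_{\scriptscriptstyle{\hat{B}}}^{n,s}$-null set'' and then treat $\supp(\varphi)$ as if it met only $R_\varrho\big(\overline{E_{\scriptscriptstyle{+}}(\hat{B})}\big)$: the exceptional set can carry positive $\lambda_{\scriptscriptstyle{\hat{B}}}^{n,s}$-mass, and on it your factorization $f\varphi=(\sqrt{\varrho}\,f)(\varphi/\sqrt{\varrho})$ degenerates because $\varrho^{-1}=\infty$ there. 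What Condition \ref{condHamza} actually gives is that this set is $\mu_{\scriptscriptstyle{\hat{B}}}^{\varrho,n,s}$-null (hence $\mu_{n,s,\varrho}$-null), so it contributes nothing to the right-hand side; to make the left-hand side harmless one must restrict the integration to $\{\varrho>0\}$, i.e.\ read $f$ through its $\mu_{n,s,\varrho}$-equivalence class (this is also the reading under which the paper's one-line proof of (ii) is to be understood). With that interpretation your split is correct: on $\supp(\varphi)\cap R_\varrho\big(\overline{E_{\scriptscriptstyle{+}}(\hat{B})}\big)$ run Cauchy--Schwarz with the finite-subcover constant, and off $R_\varrho\big(\overline{E_{\scriptscriptstyle{+}}(\hat{B})}\big)$ nothing survives because $\varrho$ vanishes $\lambda_{\scriptscriptstyle{\hat{B}}}^{n,s}$-a.e.\ there. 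As literally written, though, the null-set shortcut is a step that fails, so this part of the argument needs to be restated.
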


\begin{proof}
\begin{enumerate}
\item[(i)]
See e.g.~\cite[Chap.~2,~Lemm. 2.2]{MR92}.
\item[(ii)]
Let $\varnothing\not=B\subset I$, $\varphi\in C^\infty_c\Big(R_\varrho\big(\overline{E_{\scriptscriptstyle{+}}(B)}\big)\Big)$ and $f\in L^2\big(\overline{\Omega},\mu_{n,s,\varrho}\big)$. By a multiple application of part (i) we obtain
\begin{multline*}
\Bigg|\int_{\partial R_\varrho(\overline{E_{\scriptscriptstyle{+}}(B)})}f\varphi\,d\sigma_{\scriptscriptstyle{B}}^{n,s}\le
\sum_{\hat{B}\subsetneq B}\Bigg|\int_{\partial R_\varrho(\overline{E_{\scriptscriptstyle{+}}(B)})}f\varphi\,d\lambda_{\scriptscriptstyle{\hat{B}}}^{n,s}\Bigg|\\
\le\sum_{\hat{B}\subsetneq B}C_1(\varphi,\hat{B})\cdot\Vert f\Vert_{L^2(\overline{E_{\scriptscriptstyle{+}}(\hat{B})};\mu_{\scriptscriptstyle{\hat{B}}}^{\varrho,n,s})}
\le C_2(\varphi,B)\cdot\Vert f\Vert_{L^2(E;\mu_{\varrho,n,s})}.
\end{multline*}
\end{enumerate}
\end{proof}

\begin{proposition}\label{propclos}
Suppose that Conditions \ref{conddensity} and \ref{condHamza} are satisfied. Then $\big(\mathcal{E},\mathcal{D}\big)$ is closable on $L^2\big(E;\mu_{n,s,\varrho}\big)$. Its closure we denote by $\big(\mathcal{E},D(\mathcal{E})\big)$.
\end{proposition}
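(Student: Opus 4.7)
The plan is to reduce closability of $(\mathcal{E},\mathcal{D})$ to closability of each summand $\mathcal{E}_B$ separately, and to prove the latter by a Ma--R\"ockner type argument built on Condition~\ref{condHamza}. As the sets $E_+(B)$ partition $E$ and each measure $\mu_B^{\varrho,n,s}$ is supported on $E_+(B)$, I would first record the orthogonal decomposition $L^2(E;\mu_{n,s,\varrho})=\bigoplus_{B\subset I}L^2(E_+(B);\mu_B^{\varrho,n,s})$. Given $(f_n)\subset\mathcal{D}$ with $f_n\to 0$ in $L^2(E;\mu_{n,s,\varrho})$ and $\mathcal{E}(f_n-f_m,f_n-f_m)\to 0$, positivity of each $\mathcal{E}_B$ makes $(f_n)$ an $\mathcal{E}_B$-Cauchy sequence, so there exist $g_i^B\in L^2(E_+(B);\mu_B^{\varrho,n,s})$ with $\partial_i f_n\to g_i^B$ for every $i\in B$ and $\varnothing\ne B\subset I$. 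It then suffices to show each $g_i^B$ vanishes, since summing $\mathcal{E}_B(f_n,f_n)\to 0$ over $B$ recovers $\mathcal{E}(f_n,f_n)\to 0$.

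To show $g_i^B=0$ I would fix such $B$, $i$, and a test function $\varphi\in C_c^\infty(R_\varrho(\overline{E_+(B)}))$, and compute $\int_{E_+(B)}\partial_i f_n\,\varphi\,d\lambda_B^{n,s}$ in two ways. On one hand, because $\varphi^2\varrho^{-1}$ is $\lambda_B^{n,s}$-integrable by definition of $R_\varrho$, Cauchy--Schwarz against $\varrho^{1/2}\varphi\varrho^{-1/2}$ shows this integral converges to $\int_{E_+(B)}g_i^B\,\varphi\,d\lambda_B^{n,s}$ as $n\to\infty$. On the other hand, integrating by parts within the $|B|$-dimensional face $E_+(B)$ in the $i$-th coordinate gives $-\int f_n\,\partial_i\varphi\,d\lambda_B^{n,s}$ plus a boundary contribution carried on the lower-dimensional faces $E_+(B')$, $B'\subsetneq B$. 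The first term is bounded by $C_1(\partial_i\varphi,B)\,\|f_n\|_{L^2(E_+(B);\mu_B^{\varrho,n,s})}\to 0$ via Lemma~\ref{lemMaRo}(i), and the boundary term by $C_2(\varphi,B)\,\|f_n\|_{L^2(E;\mu_{n,s,\varrho})}\to 0$ via Lemma~\ref{lemMaRo}(ii), since $\sigma_B^{n,s}=\sum_{\hat B\subsetneq B}\lambda_{\hat B}^{n,s}$ is precisely the natural measure on the union of those subfaces.

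Equating the two limits yields $\int_{E_+(B)}g_i^B\,\varphi\,d\lambda_B^{n,s}=0$ for every admissible $\varphi$. Varying $\varphi$ over $C_c^\infty(R_\varrho(\overline{E_+(B)}))$ forces $g_i^B=0$ $\lambda_B^{n,s}$-a.e.\ on $R_\varrho(\overline{E_+(B)})$, and Condition~\ref{condHamza} supplies $\varrho=0$ $\lambda_B^{n,s}$-a.e.\ on the complement, so $g_i^B=0$ in $L^2(E_+(B);\mu_B^{\varrho,n,s})$ as desired. The closure $(\mathcal{E},D(\mathcal{E}))$ is then defined as usual as the completion of $\mathcal{D}$ with respect to the norm $\mathcal{E}_1^{1/2}=(\mathcal{E}+(\cdot,\cdot)_{L^2(E;\mu_{n,s,\varrho})})^{1/2}$.

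The hard part will be the bookkeeping of the boundary contributions in the integration by parts: unlike the classical Ma--R\"ockner setting on $\mathbb{R}^n$, the set $R_\varrho(\overline{E_+(B)})$ may touch the corner boundary $\partial E_+(B)\setminus E_+(B)$, and since $\varphi$ is merely compactly supported in $\overline{E_+(B)}$ rather than in its topological interior, the traces on the lower-dimensional subfaces do not vanish for free. Lemma~\ref{lemMaRo}(ii) is tailored exactly to this obstacle, absorbing those traces into the global $L^2(\mu_{n,s,\varrho})$-norm of $f_n$ by reapplying the Hamza-type estimate from part (i) recursively on every proper subface $E_+(\hat B)$, $\hat B\subsetneq B$.
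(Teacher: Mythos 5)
Your proposal is correct and follows essentially the same route as the paper: fix a $B$ and $i\in B$, identify the $L^2(E_+(B);\mu_B^{\varrho,n,s})$-limit of $(\partial_i f_k)$, test against $\varphi\in C_c^\infty(R_\varrho(\overline{E_+(B)}))$, integrate by parts on the face, control the interior term by Lemma \ref{lemMaRo}(i) and the lower-dimensional trace terms by Lemma \ref{lemMaRo}(ii), and invoke Condition \ref{condHamza} to pass from $\lambda_B^{n,s}$-a.e.\ vanishing on $R_\varrho(\overline{E_+(B)})$ to vanishing in $L^2(E_+(B);\mu_B^{\varrho,n,s})$. The only cosmetic difference is your explicit orthogonal-decomposition framing of $L^2(E;\mu_{n,s,\varrho})$, which the paper uses implicitly.
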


\begin{proof}
Let $(f_k)_{k\in\mathbb{N}}$ be a Cauchy sequence in $\mathcal{D}$ with respect to $\mathcal{E}$, i.e.,~$\mathcal{E}(f_k-f_l,f_k-f_l)\to 0$ as $k,l\to\infty$. Furthermore, we suppose that $f_k\to 0$ in $L^2\big(E;\mu_{n,s,\varrho}\big)$ as $k\to\infty$, i.e.,
$\big(f_k,f_k\big)_{L^2(E;\mu_{n,s,\varrho})}\to 0$ as $k\to\infty$. We have to check whether $\mathcal{E}(f_k,f_k)\to 0$ as $k\to\infty$. 
Let $\varnothing\not=B\subset I$. We know that for fixed $i\in B$, $\big(\partial_i f_k\big)_{k\in\mathbb{N}}$ converges to some $h_i$ in $L^2\big(\overline{E_{\scriptscriptstyle{+}}(B)};\mu_{\scriptscriptstyle{B}}^{\varrho,n,s}\big)$, since $\big(\partial_i f_k\big)_{k\in\mathbb{N}}$ is a Cauchy sequence in $L^2\big(\overline{E_{\scriptscriptstyle{+}}(B)};\mu_{\scriptscriptstyle{B}}^{\varrho,n,s}\big)$ and $\big(L^2\big(\overline{E_{\scriptscriptstyle{+}}(B)};\mu_{\scriptscriptstyle{B}}^{\varrho,n,s}\big),\Vert\cdot\Vert_{L^2(\overline{E_{\scriptscriptstyle{+}}(B)};\mu_{\scriptscriptstyle{B}}^{\varrho,n,s})}\big)$ is complete. Let $\varphi\in C_c^\infty\big(R_\varrho\big(\overline{E_{\scriptscriptstyle{+}}(B)}\big)\big)$. Using Lemma \ref{lemMaRo}(i) we obtain that    
\begin{multline*}
\Bigg|\int_{R_\varrho(\overline{E_{\scriptscriptstyle{+}}(B)})}h_i\,\varphi\,d\lambda_{\scriptscriptstyle{B}}^{n,s}-\int_{R_\varrho(\overline{E_{\scriptscriptstyle{+}}(B)})}\partial_if_k\,\varphi\,d\lambda_{\scriptscriptstyle{B}}^{n,s}\Bigg|\\
=\Bigg|\int_{R_\varrho(\overline{E_{\scriptscriptstyle{+}}(B)})}\Big(h_i-\partial_i f_k\Big)\,\varphi\,d\lambda_{\scriptscriptstyle{B}}^{n,s}\Bigg|\le C_1(\varphi,B)\cdot\Vert h_i-\partial_i f_k\Vert_{L^2(\overline{E_{\scriptscriptstyle{+}}(B)};\mu_{\scriptscriptstyle{B}}^{\varrho,n,s})}\to 0\quad\text{as}\quad k\to\infty.
\end{multline*}
This, together with an integration by parts, triangle inequality, Lemma \ref{lemMaRo}(ii) and the fact that $\big(f_k,f_k\big)_{L^2(E;\mu_{n,s,\varrho})}\to 0$ as $k\to\infty$ implies:
\begin{multline*}
\left|\int_{R_\varrho(\overline{E_+(B)})}h_i\,\varphi\,d\lambda_{\scriptscriptstyle{B}}^{n,s}\right|=\lim_{k\to\infty} \left|\int_{R_\varrho(\overline{E_+(B)})}\partial_if_k\,\varphi\,d\lambda_{\scriptscriptstyle{B}}^{n,s}\right|\\
=\lim_{k\to\infty} \left|\int_{\partial(R_\varrho(\overline{E_+(B)}))}f_k\,\varphi\,d\sigma_{\scriptscriptstyle{B}}^{n,s}-\int_{R_\varrho(\overline{E_+(B)})} f_k\,\partial_i\varphi\,d\lambda_{\scriptscriptstyle{B}}^{n,s}\right|\\
\le\lim_{k\to\infty} \left|\int_{\partial(R_\varrho(\overline{E_+(B)}))}f_k\,\varphi\,d\sigma_{\scriptscriptstyle{B}}^{n,s}\right|+\lim_{k\to\infty}\left|\int_{R_\varrho(\overline{E_+(B)})} f_k\,\partial_i\varphi\,d\lambda_{\scriptscriptstyle{B}}^{n,s}\right|
=0\quad\text{as}\quad k\to\infty.
\end{multline*}
Thus $h_i=0$ in $L^2\big(R_\varrho(\overline{E_{\scriptscriptstyle{+}}(B)});\lambda_{\scriptscriptstyle{B}}^{n,s}\big)$ and therefore $h_i=0$ in $L^2\big(\overline{E_{\scriptscriptstyle{+}}(B)};\varrho\,\lambda_{\scriptscriptstyle{B}}^{n,s}\big)$ by Condition \ref{condHamza}. For all $\varnothing\not=B\subset I$ this yields $h_i=0$ in $L^2\big(\overline{E_{\scriptscriptstyle{+}}(B)};\mu_{\scriptscriptstyle{B}}^{\varrho,n,s}\big)$ for all $i\in B$. Moreover,
\begin{multline*}
\mathcal{E}(f_k,f_k)=\sum_{\varnothing\not=B\subset I}\int_{E_{\scriptscriptstyle{+}}(B)}\sum_{i\in B}\big(\partial_if_k\big)^2\,d\mu_{\scriptscriptstyle{B}}^{\varrho,n,s}\\
=\sum_{\varnothing\not=B\subset I}\sum_{i\in B}\big\Vert\partial_i f_k-h_i\big\Vert_{L^2(\overline{E_{\scriptscriptstyle{+}}(B)};\mu_{\scriptscriptstyle{B}}^{\varrho,n,s})}^2\to 0\quad\text{as}\quad k\to\infty
\end{multline*}
and closability is shown. 
\end{proof}

\begin{remark}
Since $\big(\mathcal{E},\mathcal{D}\big)$ is closable on $L^2\big(E;\mu_{n,s,\varrho}\big)$ by Proposition \ref{propclos} we have that $D(\mathcal{E})$ is complete with respect to the norm $\Vert\cdot\Vert_{\mathcal{E}_1}:={\mathcal{E}(\cdot,\cdot)}^{\frac{1}{2}}+{\big(\cdot,\cdot\big)^{\frac{1}{2}}_{L^2(E;\mu_{n,s,\varrho})}}$.
\end{remark}

\begin{proposition}\label{propDirichlet}
Suppose that Conditions \ref{conddensity} and \ref{condHamza} are satisfied. Then $\big(\mathcal{E},D(\mathcal{E})\big)$ is a symmetric, regular Dirichlet form.
\end{proposition}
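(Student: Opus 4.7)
Propositions \ref{propdense} and \ref{propclos} already provide that $(\mathcal{E},D(\mathcal{E}))$ is a symmetric, densely defined, closed bilinear form on $L^2(E;\mu_{n,s,\varrho})$. To upgrade this to a regular Dirichlet form I need to establish (a) the Markovian (unit contraction) property on $D(\mathcal{E})$, and (b) that $\mathcal{D}=C_c^2(E)$ is a core, i.e. is dense in $D(\mathcal{E})$ with respect to $\|\cdot\|_{\mathcal{E}_1}$ and dense in $C_c(E)$ with respect to $\|\cdot\|_{\sup}$.

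For (a) I would use the standard smooth--approximation scheme. Fix a family $\phi_\varepsilon\in C^2(\mathbb{R})$, $\varepsilon>0$, with $\phi_\varepsilon(t)=t$ for $t\in[0,1]$, $-\varepsilon\le\phi_\varepsilon(t)\le 1+\varepsilon$, $0\le\phi'_\varepsilon\le 1$, and $\phi_\varepsilon\to(0\vee\cdot)\wedge 1$ pointwise as $\varepsilon\downarrow 0$. For $f\in\mathcal{D}$, the composition $\phi_\varepsilon\circ f$ lies again in $C_c^2(E)$, and the chain rule together with $|\phi'_\varepsilon|\le 1$ yields
\[
\sum_{i\in B}\bigl(\partial_i(\phi_\varepsilon\circ f)\bigr)^2=(\phi'_\varepsilon\circ f)^2\sum_{i\in B}(\partial_i f)^2\le\sum_{i\in B}(\partial_i f)^2
\]
pointwise on each $E_{\scriptscriptstyle +}(B)$, so $\mathcal{E}(\phi_\varepsilon\circ f,\phi_\varepsilon\circ f)\le\mathcal{E}(f,f)$. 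To transfer this to arbitrary $f\in D(\mathcal{E})$, approximate $f$ by $(f_k)\subset\mathcal{D}$ with $\|f_k-f\|_{\mathcal{E}_1}\to 0$. The Lipschitz bound $|\phi_\varepsilon(s)-\phi_\varepsilon(t)|\le|s-t|$ gives $\phi_\varepsilon\circ f_k\to\phi_\varepsilon\circ f$ in $L^2(E;\mu_{n,s,\varrho})$, while the uniform control $\mathcal{E}(\phi_\varepsilon\circ f_k)\le\mathcal{E}(f_k)$ makes the sequence $\mathcal{E}_1$-bounded. A standard Banach--Saks/Cesàro argument combined with the closedness of $(\mathcal{E},D(\mathcal{E}))$ then produces a convex combination converging in $\|\cdot\|_{\mathcal{E}_1}$, showing $\phi_\varepsilon\circ f\in D(\mathcal{E})$ and preserving the inequality. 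Finally, letting $\varepsilon\downarrow 0$ and repeating the same closedness argument delivers the unit contraction $(0\vee f)\wedge 1\in D(\mathcal{E})$ together with $\mathcal{E}((0\vee f)\wedge 1)\le\mathcal{E}(f)$.

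For (b) the $\mathcal{E}_1$-density of $\mathcal{D}$ in $D(\mathcal{E})$ is immediate from the very definition of $D(\mathcal{E})$ as the closure of $\mathcal{D}$, while the sup-norm density of $C_c^2(E)$ in $C_c(E)$ is a consequence of the extended Stone--Weierstra\ss{} theorem, exactly as already exploited in the proof of Proposition \ref{propdensedef}. Hence $\mathcal{D}$ is a regular core and the regularity of $(\mathcal{E},D(\mathcal{E}))$ follows.

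The main obstacle I expect is the transfer of the contraction inequality from $\mathcal{D}$ to all of $D(\mathcal{E})$: because the form splits as a weighted sum over the strata $E_{\scriptscriptstyle +}(B)$ with the singular density $\varrho$, each stratum contributes its own $L^2$-space of partial derivatives, and one has to verify that the gradient identification carried out in the closability proof of Proposition \ref{propclos} is compatible with post-composition by $\phi_\varepsilon$. Once this compatibility is checked stratum-by-stratum the remaining steps are routine.
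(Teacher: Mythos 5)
Your proposal is correct and takes essentially the same route as the paper: the regularity part (density of $\mathcal{D}$ in $D(\mathcal{E})$ by construction, sup-norm density via the extended Stone--Weierstra\ss{} theorem, and the inclusion $C_c^\infty(E)\subset\mathcal{D}\subset D(\mathcal{E})\cap C^0_c(E)$) is exactly the paper's argument. For the Markov property the paper merely cites \cite[Chap.~2, Sect.~2, Example c)]{MR92}, and what you write out by hand -- the smooth approximations $\phi_\varepsilon$ of the unit contraction on the core $\mathcal{D}$, followed by the closedness/Banach--Saks transfer to $D(\mathcal{E})$ as in \cite[Theo.~3.1.1]{FOT94} -- is precisely the standard content behind that citation, so there is no substantive difference.
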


\begin{proof}
The Markov property is clear, see e.g. \cite[Chap.~2, Sect.~2,~Example c)]{MR92}. Regularity can be shown as follows.
The extended Stone--Weierstra\ss~theorem, see e.g.~\cite[Chap.~7, Sect.~38]{Sim63}, yields that $C_c^\infty\big(E\big)$ is dense in $C_c^0\big(E\big)$ with respect to $\Vert\cdot\Vert_{\sup}$. Furthermore, $\mathcal{D}$ is dense in $D\big(\mathcal{E}\big)$ with respect to $\Vert\cdot\Vert_{\mathcal{E}_1}$. Since $C_c^\infty(E)\subset\mathcal{D}\subset D(\mathcal{E})\cap C^0_c(E)$, we obtain that $\big(\mathcal{E},D(\mathcal{E})\big)$ is regular.
\end{proof}

\begin{proposition}\label{propstronglocal}
Suppose that Conditions \ref{conddensity} and \ref{condHamza} are satisfied. Then the regular, symmetric Dirichlet form $\big(\mathcal{E},D(\mathcal{E})\big)$ is strongly local and conservative.
\end{proposition}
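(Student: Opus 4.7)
The plan is to verify strong locality and conservativity separately, exploiting the gradient-type expression of $\mathcal{E}$ on the core $\mathcal{D}=C_c^2(E)$ together with the regularity established in Proposition~\ref{propDirichlet}.

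\textbf{Strong locality.} The starting observation is the defining identity on the core: if $u,v \in \mathcal{D}$ are such that $v$ is constant on some open neighborhood $U$ of $\supp(u)$, then $\partial_i v \equiv 0$ on $U$, while $u \equiv 0$, and therefore $\partial_i u \equiv 0$, on the open set $E\setminus \supp(u) \supset E\setminus U$. Hence $\partial_i u\cdot \partial_i v \equiv 0$ pointwise on $E$ for every $i\in I$, so every summand $\mathcal{E}_B(u,v)$ vanishes and $\mathcal{E}(u,v)=0$. To lift this to all of $D(\mathcal{E})$ I would pass through the Beurling--Deny decomposition of the regular symmetric Dirichlet form,
\begin{align*}
\mathcal{E} \;=\; \mathcal{E}^{(c)} + \mathcal{E}^{(j)} + \mathcal{E}^{(k)},
\end{align*}
with $\mathcal{E}^{(c)}$ strongly local and $\mathcal{E}^{(j)},\mathcal{E}^{(k)}$ governed by a jumping measure $J$ and a killing measure $\kappa$. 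Applying the above pointwise identity to $u,v\in\mathcal{D}$ with disjoint supports gives $J=0$; then, applying it to a pair $(u,\psi)$ with $\psi\in\mathcal{D}$ equal to $1$ on an open neighborhood of $\supp(u)$ forces $\int u\,d\kappa=0$ for every $u\in\mathcal{D}$, hence $\kappa=0$. Thus $\mathcal{E}=\mathcal{E}^{(c)}$ is strongly local.

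\textbf{Conservativity.} The decisive observation is that Condition~\ref{conddensity} gives
\begin{align*}
\mu_{n,s,\varrho}(E)=\int_E\varrho\,dm_{n,s}<\infty,
\end{align*}
so in particular $1\in L^2(E;\mu_{n,s,\varrho})$. It then suffices to exhibit $(u_k)_{k\in\mathbb{N}}\subset\mathcal{D}$ with $u_k\to 1$ in $L^2(E;\mu_{n,s,\varrho})$ and $\mathcal{E}(u_k,u_k)\to 0$: lower semicontinuity of the closed form $(\mathcal{E},D(\mathcal{E}))$ on $L^2$ then yields $1\in D(\mathcal{E})$ with $\mathcal{E}(1,1)=0$, whence $\mathcal{E}(1,v)=0$ for every $v\in D(\mathcal{E})$ by Cauchy--Schwarz; consequently $G_\alpha 1=\tfrac{1}{\alpha}$ and $T_t 1=1$, i.e., the form is conservative. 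Fix $\chi\in C^\infty(\mathbb{R})$ with $0\le\chi\le 1$, $\chi\equiv 1$ on $[0,1]$, $\chi\equiv 0$ on $[2,\infty)$, and $|\chi'|\le 2$, and set
\begin{align*}
u_k(x):=\prod_{j\in I}\chi\!\left(\tfrac{x_j}{k}\right),\qquad x\in E.
\end{align*}
Then $u_k\in\mathcal{D}$, $0\le u_k\le 1$, and $u_k\to 1$ pointwise on $E$, so dominated convergence together with the finiteness of $\mu_{n,s,\varrho}$ gives $u_k\to 1$ in $L^2(E;\mu_{n,s,\varrho})$. Since $|\partial_i u_k|^2\le 4/k^2$ with support in $\{x_i\in[k,2k]\}$, and since $\lambda_{\scriptscriptstyle{B}}^{n,s}\le m_{n,s}$ for every $\varnothing\neq B\subset I$, one obtains
\begin{align*}
\mathcal{E}(u_k,u_k)\;\le\;\frac{4}{k^2}\sum_{\varnothing\neq B\subset I}\sum_{i\in B}\mu_{n,s,\varrho}(E)\;\le\;\frac{4\,n\,2^{n-1}}{k^2}\,\mu_{n,s,\varrho}(E)\;\longrightarrow\;0.
\end{align*}

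\textbf{Expected obstacle.} The conservativity step is essentially a routine cutoff argument, enabled entirely by the finiteness of $\mu_{n,s,\varrho}$. The genuinely delicate point is the extension of strong locality from $\mathcal{D}$ to the whole of $D(\mathcal{E})$, since the geometric condition ``$v$ constant on a neighborhood of $\supp(u)$'' is not preserved under $\mathcal{E}_1$-approximation; routing through the Beurling--Deny decomposition is the cleanest remedy, as it reduces strong locality to the vanishing of $J$ and $\kappa$, both of which are transparent from the explicit gradient-type representation of $\mathcal{E}$ on the core.
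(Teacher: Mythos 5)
Your proposal is correct and takes essentially the same route as the paper: the pointwise identity $\partial_i u\,\partial_i v\equiv 0$ on the core $\mathcal{D}$ gives the local property there, and your passage to all of $D(\mathcal{E})$ via the Beurling--Deny decomposition (showing $J=0$ and $\kappa=0$) is precisely the content of the results of Fukushima--\={O}shima--Takeda that the paper cites for the reduction to the core. For conservativity you reproduce the paper's cutoff construction (with an explicit product cutoff and lower semicontinuity of the closed form in place of the paper's $\mathcal{E}_1$-Cauchy argument), which is an equivalent and equally valid way to conclude $\mathbbm{1}_E\in D(\mathcal{E})$ with $\mathcal{E}(\mathbbm{1}_E,\mathbbm{1}_E)=0$.
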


\begin{proof}
Using \cite[Theo.~3.1.1]{FOT94} and \cite[Problem~3.1.1]{FOT94} it is sufficient to show the strong local property for elements in $\mathcal{D}$. Therefore, let $f,g\in\mathcal{D}$ with $\supp(f)$, $\supp(g)$ compact and let $g$ be constant on some open (in the trace topology of $E$) neighborhood $U$ of $\supp(f)$. Then
\begin{multline*}
\mathcal{E}\big(f,g\big)=\sum_{\varnothing\not=B\subset I}\sum_{i\in B}\int_{E_{\scriptscriptstyle{+}}(B)}\partial_if\,\partial_i g\,d\mu_{\scriptscriptstyle{B}}^{\varrho,n,s}\\
=\sum_{\varnothing\not=B\subset I}\sum_{i\in B}\int_{E_{\scriptscriptstyle{+}}(B)\cap\supp(f)}\partial_if\,\underbrace{\partial_i g}_{=0}\,d\mu_{\scriptscriptstyle{B}}^{\varrho,n,s}+\sum_{\varnothing\not=B\subset I}\sum_{i\in B}\int_{E_{\scriptscriptstyle{+}}(B)\setminus\supp(f)}\underbrace{\partial_if}_{=0}\,\partial_i g\,d\mu_{\scriptscriptstyle{B}}^{\varrho,n,s}=0.
\end{multline*} 
Hence $\big(\mathcal{E},D(\mathcal{E})\big)$ is strongly local.

Next we prove conservativity. $\mathbbm{1}_{E}\in L^2\big(E;\mu_{n,s,\varrho}\big)$ by Condition \ref{conddensity}. We show that $\mathbbm{1}_{E}\in D(\mathcal{E})$. Set $\Lambda:=[-1,\infty)^n$ and $K_k:=[0,k]^n$, $k\in\mathbb{N}$. Then there exist cutoff functions $f_k\in C_c^\infty(\Lambda)$, $k\in\mathbb{N}$, such that $0\le f_k\le f_{k+1}\le 1$, $f_k=1$ on $K_k$, $\supp(f_k)\subset B_1(K_k)$ and $\big|\partial_i f_k\big|\le C_3<\infty$. $C_3$ independent of $k\in\mathbb{N}$. Here $B_1(K_k)$, $k\in\mathbb{N}$, denotes the $1$-neighborhood of the set $K_k$. Hence
\begin{multline}\label{conv1}
\Vert\mathbbm{1}_{E}-f_k\Vert_{L^2(E;\mu_{n,s,\varrho})}^2=\int_{E}\big(\mathbbm{1}_{E}-f_k\big)^2\,d\mu_{n,s,\varrho}\\
=\int_{E\setminus K_k}\big(\mathbbm{1}_{E}-f_k\big)^2\,d\mu_{n,s,\varrho}\le \mu_{n,s,\varrho}\big(E\setminus K_k\big)\to 0\quad\text{as}\quad k\to\infty.
\end{multline}
Furthermore,
\begin{multline}\label{conv2}
\mathcal{E}(f_k,f_k)=\sum_{\varnothing\not=B\subset I}\int_{E_{\scriptscriptstyle{+}}(B)}\sum_{i\in B}\big(\partial_if_k\big)^2\,d\mu_{\scriptscriptstyle{B}}^{\varrho,n,s}\le n\,\int_{E\setminus K_k}\big(\partial_i f_k\big)^2\,d\mu_{n,s,\varrho}\\
\le n\,C_3^2\,\mu_{n,s,\varrho}\big(E\setminus K_k\big)\to 0\quad\text{as}\quad k\to\infty.
\end{multline} 
Using (\ref{conv1}) and (\ref{conv2}) we easily obtain by applying Cauchy-Schwarz inequality that $(f_k)_{k\in\mathbb{N}}$ is $\mathcal{E}_1$-Cauchy. Hence $\mathbbm{1}_{E}\in D(\mathcal{E})$ with
\begin{align*}
\mathcal{E}(\mathbbm{1}_E,\mathbbm{1}_E)=\lim_{k\to\infty}\mathcal{E}(f_k,f_k)=0
\end{align*}
 and conservativity is shown.
\end{proof}

Finally, we end up with the following result.

\begin{theorem}\label{theosumdiri}
For fixed $n\in\mathbb{N}$, $s\in(0,\infty)$ and density function $\varrho$ we have that under Conditions \ref{conddensity} and \ref{condHamza} 
\begin{align*}
\mathcal{E}(f,g)=\sum_{\varnothing\not=B\subset I}\mathcal{E}_{\scriptscriptstyle{B}}(f,g),\quad f,g\in \mathcal{D}=C_c^2\big(E\big),
\end{align*}
with 
\begin{align*}
\mathcal{E}_{\scriptscriptstyle{B}}(f,g)=\sum_{i\in B}\int_{E_{\scriptscriptstyle{+}}(B)}\partial_if\,\partial_i g\,d\mu_{\scriptscriptstyle{B}}^{\varrho,n,s},\quad\varnothing\not=B\subset I,
\end{align*}
and $\mu_{\scriptscriptstyle{B}}^{\varrho,n,s}=\varrho\,\lambda_{\scriptscriptstyle{B}}^{n,s}$,
is a densely defined, positive definite, symmetric bilinear form, which is closable on $L^2\big(E;\mu_{n,s,\varrho}\big)$. Its closure $\big(\mathcal{E},D(\mathcal{E})\big)$ is a conservative, strongly local, regular, symmetric Dirichlet form on $L^2\big(E;\mu_{n,s,\varrho}\big)$.
\end{theorem}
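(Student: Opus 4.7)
The plan is to observe that Theorem \ref{theosumdiri} is essentially a consolidation of the preceding propositions in this section, so the proof reduces to invoking each of them in the right order. The key point is that all the hard work has already been done piece by piece in Propositions \ref{propdensedef}, \ref{propdense}, \ref{propclos}, \ref{propDirichlet}, and \ref{propstronglocal}, and the task here is simply to assemble these facts into a single statement.

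First, I would note that $\mathcal{D}=C_c^2(E)$ contains $C_c^\infty(E)$, which by Proposition \ref{propdensedef} is dense in $L^2\bigl(E;\mu_{n,s,\varrho}\bigr)$; hence $\mathcal{D}$ itself is dense. The symmetry and positive definiteness of $\mathcal{E}$ on $\mathcal{D}$ are immediate from the form of the integrand $\sum_{i\in B}\partial_i f\,\partial_i g$, and were already recorded in Proposition \ref{propdense}. So bullet one (densely defined, positive definite, symmetric bilinear form) is immediate.

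Next, closability on $L^2\bigl(E;\mu_{n,s,\varrho}\bigr)$ under Conditions \ref{conddensity} and \ref{condHamza} is precisely the content of Proposition \ref{propclos}, and its closure is denoted $\bigl(\mathcal{E},D(\mathcal{E})\bigr)$. The Markov property together with regularity was shown in Proposition \ref{propDirichlet}, so $\bigl(\mathcal{E},D(\mathcal{E})\bigr)$ is a regular symmetric Dirichlet form. Finally, the strong locality and conservativity assertions are exactly Proposition \ref{propstronglocal}. Combining these four statements gives the full conclusion of Theorem \ref{theosumdiri}.

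Since every component has already been verified, there is no genuine obstacle to overcome — the only thing to be careful about is to state clearly which hypotheses (Condition \ref{conddensity} alone, or Conditions \ref{conddensity} and \ref{condHamza} together) feed into each step, since density and symmetry require only Condition \ref{conddensity}, whereas closability and the Dirichlet-form/locality/conservativity assertions require both conditions. No additional computation is needed.
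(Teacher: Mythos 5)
Your proposal is correct and matches the paper's own proof, which likewise just cites Propositions \ref{propdense}, \ref{propclos}, \ref{propDirichlet} and \ref{propstronglocal} and assembles them. Your added bookkeeping about which conditions feed into which step is accurate and harmless.
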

\begin{proof}
See Propositions \ref{propdense}, \ref{propclos}, \ref{propDirichlet} and \ref{propstronglocal}.
\end{proof}

\subsection{Generators}
By Friedrichs representation theorem we have the existence of the self-adjoint generator\\
$\big(H,D(H)\big)$ corresponding to $\big(\mathcal{E},D(\mathcal{E})\big)$.
\begin{proposition}\label{propgen}
Suppose that Conditions \ref{conddensity} and \ref{condHamza} are satisfied. There exists a unique, positive, self-adjoint, linear operator $\big(H,D(H)\big)$ on $L^2\big(E;\mu_{n,s,\varrho}\big)$ such that
\begin{align*}
D(H)\subset D(\mathcal{E})\quad\text{and}\quad\mathcal{E}\big(f,g\big)=\Big(H f,g\Big)_{L^2(E;\mu_{n,s,\varrho})}
\quad\text{for all }f\in D(H),~g\in D(\mathcal{E}).
\end{align*}
\end{proposition}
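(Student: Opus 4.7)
The plan is to invoke the standard correspondence between densely defined, closed, symmetric, positive semidefinite bilinear forms on a Hilbert space and positive self-adjoint operators on that same space, which is essentially the content of the Friedrichs (or Kato's first) representation theorem. Theorem \ref{theosumdiri} already provides exactly the input required: under Conditions \ref{conddensity} and \ref{condHamza}, the form $(\mathcal{E}, D(\mathcal{E}))$ is densely defined, symmetric, positive definite, and closed on $L^{2}(E;\mu_{n,s,\varrho})$. Thus all hypotheses of the representation theorem are met and no further verification at the form level is needed.

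Concretely, I would proceed as follows. First, I would recall the statement of the representation theorem in the form appropriate to our setting, e.g.\ as formulated in \cite[Chap.~1, Sect.~2]{MR92} or in Fukushima--Oshima--Takeda \cite{FOT94}: to any closed, densely defined, symmetric, positive semidefinite bilinear form $(\mathcal{E}, D(\mathcal{E}))$ on a real Hilbert space there corresponds a unique non-negative self-adjoint operator $(H, D(H))$ with the property that $D(H) \subset D(\mathcal{E})$ and
\begin{equation*}
\mathcal{E}(f,g) = (Hf, g)_{L^{2}(E;\mu_{n,s,\varrho})} \qquad \text{for all } f \in D(H),\ g \in D(\mathcal{E}).
\end{equation*}
Second, I would check that the hypotheses of this theorem hold in our situation by citing Theorem \ref{theosumdiri}, which furnishes closedness (the form is the closure of $(\mathcal{E},\mathcal{D})$), symmetry, positivity, and density of the form domain in $L^{2}(E;\mu_{n,s,\varrho})$. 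Uniqueness, positivity, and self-adjointness of $(H, D(H))$ are then part of the conclusion of the representation theorem.

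There is essentially no obstacle here beyond correctly identifying that the preceding results put us precisely in the scope of the Friedrichs/Kato representation theorem; the proposition is a direct consequence. The only minor point worth making explicit in the write-up is that the bilinear form under consideration is positive semidefinite (rather than strictly positive definite in the inner-product sense) on $L^{2}(E;\mu_{n,s,\varrho})$, but this causes no difficulty because the representation theorem only requires positivity, and the non-negativity of the resulting operator $H$ is exactly what the proposition asserts.
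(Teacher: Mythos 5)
Your proposal is correct and matches the paper's own proof, which likewise treats the statement as an immediate consequence of the Friedrichs-type representation theorem (cited there as \cite[Coro.~1.3.1]{FOT94}) applied to the closed, densely defined, symmetric, positive form obtained from Proposition \ref{propclos}. Citing Theorem \ref{theosumdiri} instead of Proposition \ref{propclos} is an inessential difference.
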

\begin{proof}
Using Proposition \ref{propclos} this is a direct application of \cite[Coro.~1.3.1]{FOT94}.
\end{proof}

We need additional assumptions on the density function $\varrho$ in order to give a characterization of the generator $H$ on a certain subset of its domain $D(H)$. 

\begin{condition}\label{condweakdiff}
$ $
\begin{enumerate}
\item[(i)]
$\sqrt{\varrho}\in H^{1,2}(\mathring{E})$, where $H^{1,2}(\mathring{E})$ denotes the Sobolev space of weakly differentiable functions on $\mathring{E}$, square integrable together with their derivative.
\item[(ii)]
$\varrho\in C^1(E)$, where $C^1(E)$ denotes the space of continuously differentiable functions on $E$.
\end{enumerate}
\end{condition}

\begin{remark}\label{remcondequi}
$ $
\begin{enumerate}
\item[(i)]
Note that the additional assumptions collected in Condition \ref{condweakdiff} are not necessary for the existence of the generator $\big(H,D(H)\big)$.
\item[(ii)]
If $\varrho$ fulfills Condition \ref{conddensity} then Condition \ref{condweakdiff}(i) is equivaltent to $\big(\partial_i\ln(\varrho)\big)_{i=1}^n$\\
$\in L^2(E;\mu^{n,s,\varrho})$.
\item[(iii)]
Condition \ref{condweakdiff}(ii) is equivalent to $\big(\partial_i\ln(\varrho)\big)_{i=1}^n$ being continuous.
\end{enumerate}
\end{remark}

For $f\in\mathcal{D}$ we define
\begin{align*}
L^sf:={L}^{n,s,\varrho}f:=\sum_{i=1}^n\partial^2_i f+\sum_{i=1}^n\partial_i f\,\partial_i (\ln\varrho)+\frac{1}{s}\sum_{i=1}^n\partial_i f
\end{align*}
and
\begin{align*}
Lf:={L}^{n,{\scriptscriptstyle{\infty}},\varrho} f:=\sum_{i=1}^n\partial^2_i f+\sum_{i=1}^n\partial_i f\,\partial_i (\ln\varrho).
\end{align*}
\begin{proposition}\label{propibp}
Suppose Conditions \ref{conddensity}, \ref{condHamza} and \ref{condweakdiff} are satisfied. For functions 
\begin{align*}
f,g\in \mathcal{D}_{\scriptscriptstyle{\text{Wentzell}}}:=\Big\{h\in\mathcal{D}\,\Big|\,\big(L^{s}h\big)\big|_{{E_{\scriptscriptstyle{+}}(B)}}=0\quad\text{for all}\quad B\subsetneq I\Big\}
\end{align*}
we have the representation $\mathcal{E}\big(f,g\big)=\Big(-Lf,g\Big)_{\scriptscriptstyle{L^2(E;\mu_{n,s,\varrho})}}$.
\end{proposition}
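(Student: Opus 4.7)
The plan is to reduce the claim to a Gauss--Green / integration by parts identity on each face of the stratified domain $E$, with the Wentzell condition absorbing the surface contributions that appear at the lower-dimensional strata. Condition \ref{condweakdiff}(ii) guarantees that $\varrho \in C^1(E)$, so that $\partial_i \ln \varrho$ is continuous up to $\partial E$ and all pointwise traces appearing below are well defined; combined with Condition \ref{conddensity} and the compact support of $f, g \in \mathcal{D}$, this also secures integrability throughout.

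First, I would fix $\varnothing \neq B \subset I$ and, for each $i \in B$, integrate by parts in the variable $x_i$ along the $\#B$-dimensional face $E_+(B)$. Since $f$ has compact support in $E$, only the contribution at $x_i = 0$ survives, and summing over $i \in B$ produces
\begin{align*}
\mathcal{E}_B(f,g) = -\int_{E_+(B)} L_B f \cdot g \, d\mu_B^{\varrho,n,s} - \frac{1}{s} \sum_{i \in B} \int_{E_+(B \setminus \{i\})} \partial_i f \cdot g \, d\mu_{B \setminus \{i\}}^{\varrho,n,s},
\end{align*}
where $L_B f := \sum_{i \in B}(\partial_i^2 f + \partial_i f \, \partial_i \ln \varrho)$ is the tangential portion of $L$ on $E_+(B)$. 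The $\frac{1}{s}$ prefactor arises because the measure $\lambda_{B \setminus \{i\}}^{n,s}$ on the sub-face carries the weight $s^{n-\#B+1}$, one power of $s$ above the product of Lebesgue factors produced naturally by the integration by parts.

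Next, after summing over $\varnothing \neq B \subset I$ and reindexing the surface terms through the bijection $(B,i) \leftrightarrow (C,i)$ with $C = B \setminus \{i\}$, $C \subsetneq I$, $i \in I \setminus C$, I would invoke the Wentzell assumption $L^s f|_{E_+(C)} = 0$ on each proper face $C \subsetneq I$. Writing $Lf = L_B f + \sum_{i \in I \setminus B}(\partial_i^2 f + \partial_i f \, \partial_i \ln \varrho)$ to split the full generator into tangential and normal parts on each face, and using the Wentzell identity to re-express the accumulated $\frac{1}{s}$-surface integrals at $E_+(C)$ together with the tangential-bulk term $-\int_{E_+(C)} L_C f \cdot g \, d\mu_C^{\varrho,n,s}$ as $-\int_{E_+(C)} Lf \cdot g \, d\mu_C^{\varrho,n,s}$, the whole expression collapses to $-\sum_{B \subset I} \int_{E_+(B)} Lf \cdot g \, d\mu_B^{\varrho,n,s} = (-Lf, g)_{L^2(E;\mu_{n,s,\varrho})}$. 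The stratum $B = I$ is handled directly by the same integration by parts, since $L_I = L$ and $\mathring E = E_+(I)$.

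The main obstacle is this final algebraic reorganization: one has to verify, at each stratum $E_+(C)$ with $C \subsetneq I$, that the surface terms collected from all parent faces $E_+(C \cup \{i\})$, $i \notin C$, combine with the tangential generator $L_C f$ exactly via the single linear relation supplied by the Wentzell condition on $E_+(C)$. The identical factor $\frac{1}{s}$ that appears both in the measure weights $s^{n-\#B}$ and in the boundary term $\frac{1}{s} \sum_i \partial_i$ of $L^s$ is precisely what makes this face-by-face telescoping succeed and produces the stated identity.
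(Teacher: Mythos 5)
Your proposal follows the paper's proof essentially verbatim: the same face-by-face integration by parts producing the tangential bulk terms $-\int_{E_+(B)}L_Bf\,g\,d\mu_{\scriptscriptstyle B}^{\varrho,n,s}$ and the $\tfrac{1}{s}$-weighted boundary integrals on the sub-faces (the factor $\tfrac{1}{s}$ coming from $\lambda_{\scriptscriptstyle B}^{n,s}=s^{n-\# B}\lambda_{\scriptscriptstyle B}^{\scriptscriptstyle (n)}$), the same reindexing of the surface terms over parent faces (which the paper organizes as a descending induction on $\# B$, ending at the origin), and the same final appeal to the Wentzell condition to identify the resulting integrand on each proper face $E_+(B)$ with $-Lf$. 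The step you flag as the ``main obstacle'' is exactly the one the paper compresses into its closing sentence ``Now using Wentzell type boundary condition we obtain the desired result,'' so your argument is complete to the same degree as the paper's own.
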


\begin{remark}
Elements from $\mathcal{D}_{\scriptscriptstyle{\text{Wentzell}}}$ are said to fulfill a \emph{Wentzell type boundary condition}.
\end{remark}

\begin{proof}
Let $f,g\in C^2_c(E)$. In order to show this representation we carry out an integration by parts. We start with $B=I$, i.e., $\# B=n$:
\begin{multline*}
\mathcal{E}_{\scriptscriptstyle{I}}(f,g)=\sum_{i\in I}\int_{\mathring{E}}\partial_if\,\partial_i g\,\varrho\,d\lambda^{\scriptscriptstyle{(n)}}_{\scriptscriptstyle{I}}
=\sum_{i\in I}\int_{\mathring{E}}\partial_if\varrho\,\partial_i g\,d\lambda_{\scriptscriptstyle{I}}^{\scriptscriptstyle{(n)}}\\
=\sum_{i\in I}\int_{\mathring{E}}\Big(-\partial^2_if\varrho-\partial_if\partial_i\varrho\Big)\,
g(x)\,d\lambda^{\scriptscriptstyle{(n)}}_{\scriptscriptstyle{I}}-\sum_{\stackunder{\#B=n-1}{B\subset I}}\sum_{i\in I\setminus B}\int_{E_+(B)}\partial_i fg\varrho\,d\lambda^{\scriptscriptstyle{(n)}}_{\scriptscriptstyle{B}}\\
=\sum_{i\in I}\int_{\scriptscriptstyle{\mathring{E}}}\Big(-\partial^2_i f-\partial_i f\partial_i \ln(\varrho)\Big)\,
g(x)\,\varrho\,d\lambda_{\scriptscriptstyle{I}}^{\scriptscriptstyle{(n)}}-\sum_{\stackunder{\#B=n-1}{B\subset I}}\sum_{i\in I\setminus B}\int_{E_+(B)}\partial_i f\,g\varrho\,d\lambda_{\scriptscriptstyle{B}}^{\scriptscriptstyle{(n)}}.
\end{multline*}
Next we consider all $B\subset I$ such that $\# B=n-1$, i.e.,
\begin{multline*}
\mathcal{E}_{\scriptscriptstyle{B}}(f,g)=\sum_{i\in B}\int_{E_{+}(B)}\partial_if\,\partial_i g\,s\,\varrho\,\prod_{i\in B}dx_{+}^i\prod_{j\in I\setminus B}d\delta^j_0\\
=\sum_{i\in B}\int_{E_+(B)}\Big(-\partial^2_i f-\partial_i f\partial_i \ln(\varrho)\Big)\,
g\,\varrho\,s\,d\lambda_{\scriptscriptstyle{B}}^{\scriptscriptstyle{(n)}}\\
-\sum_{\stackunder{\#\tilde{B}=n-2}{\tilde{B}\subset B}}\sum_{i\in I\setminus \tilde{B}}\int_{E_+(\tilde{B})}\partial_i f\,g\varrho\,s\,d\lambda_{\scriptscriptstyle{\tilde{B}}}^{\scriptscriptstyle{(n)}}.
\end{multline*}
Proceeding inductively we end up with all $B\subset I$ fulfilling $\# B=1$, i.e., we consider
\begin{multline*}
\mathcal{E}_{\scriptscriptstyle{B}}(f,g)=\sum_{i\in B}\int_{E_{+}(B)}\partial_if\,\partial_i g\,\varrho\,s^{n-1}\,\prod_{i\in B}dx_{+}^i\prod_{j\in I\setminus B}d\delta^j_0\\
=\sum_{i\in B}\int_{E_+(B)}\Big(-\partial^2_i f-\partial_i f\partial_i \ln(\varrho)\Big)\,
g\,\varrho\,s^{n-1}\,d\lambda_{\scriptscriptstyle{B}}^{\scriptscriptstyle{(n)}}\\
-s^{n-1}\,\sum_{i\in I}\partial_i f(0)\,g(0)\varrho(0).
\end{multline*}
Combining all this yields
\begin{multline}\label{ibp}
\mathcal{E}\big(f,g\big)=\sum_{\varnothing\not=B\subset I}\mathcal{E}_{\scriptscriptstyle{B}}\big(f,g\big)\\
=\sum_{i\in I}\int_{\mathring{E}}\Big(-\partial^2_i f-\partial_i f\partial_i \ln(\varrho)\Big)\,
g\,\varrho\,d\lambda_{\scriptscriptstyle{I}}^{\scriptscriptstyle{(n)}}\\
+\sum_{\stackunder{\#B=n-1}{B\subset I}}\int_{E_+(B)}\left(\sum_{i\in B}\Big(-\partial^2_i f-\partial_i f\partial_i \ln(\varrho)\Big)-\frac{1}{s}\sum_{i\in I\setminus B}\partial_i f\right)\,
\,g\,s\,\varrho\,d\lambda_{\scriptscriptstyle{B}}^{\scriptscriptstyle{(n)}}\\
+\sum_{\stackunder{\#B=n-2}{B\subset I}}\int_{E_+(B)}\left(\sum_{i\in B}\Big(-\partial^2_i f-\partial_i f\partial_i \ln(\varrho)\Big)-\frac{1}{s}\sum_{i\in I\setminus B}\partial_i f\right)\,
\,g\,s^2\,\varrho\,d\lambda_{\scriptscriptstyle{B}}^{\scriptscriptstyle{(n)}}\\
\begin{array}{c}
+\\
\vdots\\
+
\end{array}\\
\sum_{\stackunder{\#B=1}{B\subset I}}\int_{E_+(B)}\left(\sum_{i\in B}\Big(-\partial^2_i f-\partial_i f\partial_i \ln(\varrho)\Big)-\frac{1}{s}\sum_{i\in I\setminus B}\partial_i f\right)\,
\,g\,s^{n-1}\,\varrho(x)\,d\lambda_{\scriptscriptstyle{B}}^{\scriptscriptstyle{(n)}}\\
-\sum_{i\in I}\frac{1}{s}\,\partial_i f(0)\,g(0)\,s^n\,\varrho(0).
\end{multline}
Now using Wentzell type boundary condition we obtain the desired result.
\end{proof}

\section{The associated Markov process}\label{sectprocess}
Since $\big(\mathcal{E},D(\mathcal{E})\big)$ is a regular, symmetric Dirichlet form on $L^2\big(E;\mu_{n,s,\varrho}\big)$ which is conservative and possesses the strong local property, we obtain the following theorem, where $\big(T_t\big)_{t>0}$ denotes the $C^0$-semigroup corresponding to $\big(\mathcal{E},D(\mathcal{E})\big)$, see e.g.~\cite[Diagram 3]{MR92}.
\begin{theorem}\label{theoprocess}
Suppose that Conditions \ref{conddensity} and \ref{condHamza} are satisfied. Then there exists a conservative diffusion process (i.e.~a strong Markov process with continuous sample paths and infinite life time)
\begin{align*}
\mathbf{M}:=\mathbf{M}^{n,s,\varrho}:=\big(\mathbf{\Omega},\mathbf{F},(\mathbf{F}_t)_{t\ge 0},(\mathbf{X}_t)_{t\ge 0},(\mathbf{\Theta}_t)_{t\ge 0},(\mathbf{P}^{n,s,\varrho}_x)_{x\in E}\big)
\end{align*}
with state space $E$ which is properly associated with $\big(\mathcal{E},D(\mathcal{E})\big)$, i.e., for all ($\mu_{n,s,\varrho}$-versions of) $f\in L^2\big(E;\mu_{n,s,\varrho}\big)$ and all $t>0$ the function
\begin{align*}
E\ni x\mapsto\mathbb{E}^{n,s,\varrho}_x\Big(f\big(\mathbf{X}_t\big)\Big):=\int_{\mathbf{\Omega}}f\big(\mathbf{X}_t\big)\,d\mathbf{P}^{n,s,\varrho}_x\in[0,\infty)
\end{align*}
is a $\mathcal{E}$-quasi continuous version of $T_tf$. $\mathbf{M}$ is up to $\mu_{n,s,\varrho}$-equivalence unique. In particular, $\mathbf{M}$ is $\mu_{n,s,\varrho}$-symmetric, i.e., 
\begin{align*}
\int_{E}T_tf\,g\,d\mu_{n,s,\varrho}=\int_{E}f\,T_tg\,d\mu_{n,s,\varrho}\quad\text{for all}\quad f,g:E\to[0,\infty)\text{ measurable and all }t>0,
\end{align*}
and has $\mu_{n,s,\varrho}$ as invariant measure, i.e., 
\begin{align*}
\int_{E}T_tf\,d\mu_{n,s,\varrho}=\int_{E}f\,d\mu_{n,s,\varrho}\quad\text{for all}\quad f:E\to[0,\infty)\text{ measurable and all }t>0.
\end{align*}
\end{theorem}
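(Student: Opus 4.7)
The plan is to invoke the general correspondence between regular symmetric Dirichlet forms and Hunt processes developed in \cite{FOT94}. Theorem \ref{theosumdiri} has already equipped us with exactly the right hypotheses: $\big(\mathcal{E},D(\mathcal{E})\big)$ is a regular, symmetric, strongly local and conservative Dirichlet form on $L^2\big(E;\mu_{n,s,\varrho}\big)$, while $E=[0,\infty)^n$ is locally compact and has a countable base (see also Remark \ref{remBaire}). These are precisely the ingredients required for Fukushima's construction theorem.

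First I would apply \cite[Theo.~7.2.1]{FOT94} (alternatively \cite[Chap.~IV]{MR92}) to obtain an $\mu_{n,s,\varrho}$-symmetric Hunt process $\mathbf{M}$ properly associated with $\big(\mathcal{E},D(\mathcal{E})\big)$, meaning that for every $\mu_{n,s,\varrho}$-version of $f\in L^2\big(E;\mu_{n,s,\varrho}\big)$ and every $t>0$ the map $x\mapsto\mathbb{E}^{n,s,\varrho}_x\big(f(\mathbf{X}_t)\big)$ is an $\mathcal{E}$-quasi continuous $\mu_{n,s,\varrho}$-version of $T_tf$. Uniqueness up to $\mu_{n,s,\varrho}$-equivalence is part of the same statement.

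The qualitative path properties are then read off from the additional structure of the form. Strong locality, established in Proposition \ref{propstronglocal}, lets me upgrade $\mathbf{M}$ from a Hunt process to a diffusion by the characterisation in \cite[Theo.~4.5.1]{FOT94}: the strong local property is equivalent to the associated Hunt process having $\mathbf{P}^{n,s,\varrho}_x$-almost surely continuous sample paths up to the life time $\zeta$, for $\mathcal{E}$-quasi every $x\in E$. Conservativity, also shown in Proposition \ref{propstronglocal}, is equivalent to $T_t\mathbbm{1}_E=\mathbbm{1}_E$ in $L^2\big(E;\mu_{n,s,\varrho}\big)$, which via \cite[Theo.~1.6.6]{FOT94} translates into $\mathbf{P}^{n,s,\varrho}_x(\zeta=\infty)=1$ for $\mathcal{E}$-quasi every $x\in E$. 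Together these give a conservative diffusion on $E$.

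Finally, the $\mu_{n,s,\varrho}$-symmetry of $\mathbf{M}$ is merely a rewriting of the symmetry of the $L^2$-semigroup $(T_t)_{t>0}$; by the monotone class theorem it extends from $L^2$ to all non-negative measurable functions, and the stated invariance then follows by inserting $g\equiv\mathbbm{1}_E$ and using conservativity. The only subtle point in this plan is that Fukushima's construction yields a process defined initially only off a properly exceptional set $N\subset E$; this is absorbed into the $\mathcal{E}$-quasi notions appearing in the statement, so it presents no genuine obstacle. In this sense the real work of the theorem is not in the present proof but in the verification of regularity, strong locality and conservativity carried out in Section \ref{sectfuana}.
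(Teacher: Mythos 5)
Your proposal is correct and follows essentially the same route as the paper, which simply cites the general construction theorem for (quasi-)regular local Dirichlet forms (\cite[Chap.~V, Theo.~1.11]{MR92}); you spell out the same machinery via \cite{FOT94}, using regularity for existence of the Hunt process, strong locality for path continuity, and conservativity for infinite lifetime. The added detail is harmless, and your closing observation that the real work lies in Section \ref{sectfuana} matches the paper's intent exactly.
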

In the above theorem $\mathbf{M}$ is canonical, i.e., $\mathbf{\Omega}=C^0\big([0,\infty),E\big)$, the space of continuous functions on $[0,\infty)$ into $E$, $\mathbf{X}_t(\omega)=\omega(t)$, $\omega\in\mathbf{\Omega}$. The filtration $(\mathbf{F}_t)_{t\ge 0}$ is the natural minimum completed admissible filtration obtained from the $\sigma$-algebras $\mathbf{F}^0_t:=\sigma\Big\{\textbf{X}_{\tau}\,\Big|\,0\le \tau\le t\Big\}$, $t\ge 0$, and $\mathbf{F}:=\mathbf{F}_{\scriptscriptstyle{\infty}}:=\bigvee_{t\in[0,\infty)}\mathbf{F}_t$. For each $t\ge 0$ we denote by $\mathbf{\Theta}_t:\mathbf{\Omega}\to\mathbf{\Omega}$ a shift operator such that $\mathbf{X}_\tau\circ\mathbf{\Theta}_t=\mathbf{X}_{\tau+t}$ for all $\tau\ge 0$.
\begin{proof}
See e.g.~\cite[Chap.~V,~Theo.~1.11]{MR92}.
\end{proof}

\begin{theorem}\label{theomartingale}
The diffusion process $\mathbf{M}$ from Theorem \ref{theoprocess} is up to $\mu_{n,s,\varrho}$-equivalence the unique diffusion process having $\mu_{n,s,\varrho}$ as symmetrizing measure and solving the martingale problem for $\big(H,D(H)\big)$, i.e., for all $g\in D(H)$
\begin{align*}
\widetilde{g}(\mathbf{X}_t)-\widetilde{g}(\mathbf{X}_0)+\int_0^t \big(Hg\big)(\mathbf{X}_\tau)\,d\tau,\quad t\ge 0,
\end{align*}
is an $\mathbf{F}_t$-martingale under $\mathbf{P}^{n,s,\varrho}_x$ (hence starting in x) for $\mathcal{E}$-quasi all $x\in E$. $\mathcal{E}$-quasi all $x\in E$ or $\mathcal{E}$-quasi every $x\in E$ (abbreviated by q.a.~$x\in E$ or q.e.~$x\in E$, respectively) means all $x\in E$ except those contained in a set of $\mathcal{E}$-capacity zero. 
(Here $\widetilde{g}$ denotes a quasi-continuous version of $g$, see \cite[Chap.~IV, Prop.~3.3]{MR92}.)
\end{theorem}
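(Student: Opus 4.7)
The plan is to deduce the martingale characterization of $\mathbf{M}$ from two ingredients already in hand: the semigroup-generator identity for $\bigl(H,D(H)\bigr)$ supplied by Proposition~\ref{propgen}, and the proper association of $\mathbf{M}$ with $\bigl(\mathcal{E},D(\mathcal{E})\bigr)$ together with $\mu_{n,s,\varrho}$-symmetry and conservativeness from Theorem~\ref{theoprocess}. The existence statement and the uniqueness statement will be handled separately; existence is a semigroup-functional calculus argument combined with the strong Markov property, while uniqueness is a standard Laplace-transform identification of transition semigroups.

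For existence, I would start from the operator identity $T_t g-g=-\int_0^t T_\tau Hg\,d\tau$, valid in $L^2\bigl(E;\mu_{n,s,\varrho}\bigr)$ for every $g\in D(H)$. Because $\mathbf{M}$ is properly associated with $\bigl(\mathcal{E},D(\mathcal{E})\bigr)$, the function $x\mapsto \mathbb{E}^{n,s,\varrho}_x[\widetilde{g}(\mathbf{X}_t)]$ is an $\mathcal{E}$-quasi-continuous version of $T_tg$, and analogously for $T_\tau Hg$; passing to quasi-continuous versions and invoking Fubini (together with conservativeness, which lets us avoid a killing term) yields
\begin{align*}
\mathbb{E}^{n,s,\varrho}_x\bigl[\widetilde{g}(\mathbf{X}_t)\bigr]=\widetilde{g}(x)-\int_0^t\mathbb{E}^{n,s,\varrho}_x\bigl[(Hg)(\mathbf{X}_\tau)\bigr]\,d\tau
\end{align*}
for $\mathcal{E}$-quasi every $x\in E$. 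Applying this identity along with the strong Markov property to conditional expectations given $\mathbf{F}_s$ and using $\mathbf{X}_{s+\cdot}=\mathbf{X}_\cdot\circ\mathbf{\Theta}_s$ gives
\begin{align*}
\mathbb{E}^{n,s,\varrho}_x\!\left[\widetilde{g}(\mathbf{X}_t)+\int_0^t(Hg)(\mathbf{X}_\tau)\,d\tau\,\Big|\,\mathbf{F}_s\right]=\widetilde{g}(\mathbf{X}_s)+\int_0^s(Hg)(\mathbf{X}_\tau)\,d\tau,
\end{align*}
which is precisely the asserted martingale property. The relevant machinery is collected in \cite[Chap.~IV, Theo.~3.5 and Sect.~3]{MR92}, and the argument is analogous to \cite[Lem.~5.1.3]{FOT94}.

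For uniqueness, I would let $\mathbf{M}'$ be any $\mu_{n,s,\varrho}$-symmetric diffusion on $E$ solving the martingale problem for $\bigl(H,D(H)\bigr)$, with transition semigroup $(P'_t)_{t\ge 0}$ on $L^2\bigl(E;\mu_{n,s,\varrho}\bigr)$. Taking expectations in the martingale identity and dividing by $t$ yields $t^{-1}\bigl(P'_tg-g\bigr)\to -Hg$ pointwise q.e., and by symmetry and dominated convergence this convergence can be lifted to $L^2$, so the $L^2$-generator of $(P'_t)$ extends $\bigl(H,D(H)\bigr)$. Since $H$ is already self-adjoint by Proposition~\ref{propgen}, the extension must be an equality, so $(P'_t)$ coincides with the semigroup $(T_t)$ generated by $H$. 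The agreement of transition semigroups forces the finite-dimensional distributions of $\mathbf{M}'$ to coincide with those of $\mathbf{M}$ for $\mu_{n,s,\varrho}$-a.e.\ starting point, and continuity of sample paths promotes this to equivalence of the full laws on path space.

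The main obstacle is the customary control of the $\mathcal{E}$-exceptional sets: the identities above hold only up to $\mu_{n,s,\varrho}$-null sets in $L^2$, whereas the martingale statement must hold $\mathbf{P}^{n,s,\varrho}_x$-almost surely for $\mathcal{E}$-quasi every $x\in E$. The key technical input that bridges the gap is the fact that for an $\mathcal{E}$-quasi-continuous $f$ the map $x\mapsto\mathbb{E}^{n,s,\varrho}_x[f(\mathbf{X}_t)]$ is again $\mathcal{E}$-quasi-continuous and that sets of capacity zero are properly exceptional for $\mathbf{M}$, which again is exactly what the proper association in Theorem~\ref{theoprocess} provides; this is also the reason the result can only be formulated up to $\mu_{n,s,\varrho}$-equivalence and for q.e.\ starting point.
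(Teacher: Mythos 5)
Your argument is correct in outline, but it is worth noting that the paper itself does not argue at all: its proof of Theorem \ref{theomartingale} is the single citation to \cite[Theo.~3.4(i)]{AR95}, which is exactly the general statement (existence and uniqueness, up to $\mu$-equivalence, of a $\mu$-symmetric diffusion solving the martingale problem for the generator of a regular Dirichlet form) that you set out to reprove. What you do differently is reconstruct the content of that citation: the existence half via $T_tg-g=-\int_0^tT_\tau Hg\,d\tau$, proper association, and the Markov property, and the uniqueness half via the observation that the $L^2$-generator of any competing $\mu_{n,s,\varrho}$-symmetric solution is a self-adjoint extension of the self-adjoint $H$, hence equals $H$, so the semigroups coincide. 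This buys a self-contained and instructive argument at the price of several technical points that \cite{AR95} handles and that your sketch only gestures at: obtaining a single properly exceptional set that works simultaneously for all $t$ (via countably many times and right-continuity of paths, since the q.e.\ identity a priori has a $t$-dependent exceptional set), the integrability of $\widetilde{g}(\mathbf{X}_t)$ and $\int_0^t|Hg|(\mathbf{X}_\tau)\,d\tau$ under $\mathbf{P}^{n,s,\varrho}_x$ for q.e.\ $x$, the strong continuity on $L^2\big(E;\mu_{n,s,\varrho}\big)$ of the transition semigroup $(P'_t)_{t\ge 0}$ of the competing process (your ``dominated convergence'' step is better replaced by the integrated identity $P'_tg-g=-\int_0^tP'_\tau Hg\,d\tau$ in $L^2$), and the precise sense in which agreement of the $L^2$-semigroups yields $\mu_{n,s,\varrho}$-equivalence of the processes. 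None of these is a genuine gap --- they are exactly the points the cited theorem takes care of --- so your proposal is a sound expanded version of the paper's one-line proof.
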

\begin{proof}
See e.g.~\cite[Theo.~3.4(i)]{AR95}.
\end{proof}

\section{Analysis of the stochastic process by additive functionals}\label{sectanapro}  
Throughout this section we assume that we are given the regular, symmetric Dirichlet form $\big(\mathcal{E},D(\mathcal{E})\big)$ on $L^2\big(E;\mu_{n,s,\varrho}\big)$ which is conservative and possesses the strong local property, see Section \ref{sectfuana}, and the associated diffusion process $\mathbf{M}$ from Section \ref{sectprocess}. 
Let $g\in D(\mathcal{E})$ be essentially bounded. Due to \cite[Sect.~3.2]{FOT94} there exists a unique, finite, positive Radon measure $\nu_{\scriptscriptstyle{\langle g\rangle}}$ on $\big(E,\mathcal{B}_{E}\big)$ satisfying
\begin{align*}
\int_{E}f\,d\nu_{\scriptscriptstyle{\langle g\rangle}}=2\,\mathcal{E}(gf,g)-\mathcal{E}(g^2,f)\quad\text{for all }f\in D(\mathcal{E})\cap C^{0}_c\big(E\big).
\end{align*}
\begin{remark}
For an essentially bounded $g\in D(\mathcal{E})$ the measure $\nu_{\scriptscriptstyle{\langle g\rangle}}$ is called the \emph{energy measure} of $g$.
\end{remark}

\begin{proposition}\label{propenergymeasure}
Suppose that Conditions \ref{conddensity} and \ref{condHamza} are satisfied. Let $g\in\mathcal{D}\subset D(\mathcal{E})$. Then the energy measure $\nu_{\scriptscriptstyle{\langle g\rangle}}$ of $g$ is given by
\begin{align*}
\nu_{\scriptscriptstyle{\langle g\rangle}}=2\sum_{\varnothing\not=B\subset I}\sum_{i\in B}\big(\partial_ig\big)^2\,\mu_{\scriptscriptstyle{B}}^{\varrho,n,s}.
\end{align*}
\end{proposition}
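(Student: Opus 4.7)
My plan is to test the defining relation of the energy measure against functions in the core $\mathcal{D}=C^2_c(E)$, then use Riesz representation to identify $\nu_{\scriptscriptstyle{\langle g\rangle}}$ with the candidate Radon measure
\begin{align*}
\tilde\nu:=2\sum_{\varnothing\not=B\subset I}\sum_{i\in B}(\partial_ig)^2\,\mu_{\scriptscriptstyle{B}}^{\varrho,n,s}.
\end{align*}
Since $g\in C^2_c(E)$, each $(\partial_ig)^2$ is bounded with common compact support; combined with the local finiteness of $\mu_{n,s,\varrho}$ observed in Remark \ref{remBaire}, this makes $\tilde\nu$ a finite positive Radon measure on $\big(E,\mathcal{B}_{\scriptscriptstyle{E}}\big)$.

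\textbf{The key calculation.} Fix $g\in\mathcal{D}$ and first take $f\in\mathcal{D}\subset D(\mathcal{E})\cap C^0_c\big(E\big)$. By the product rule, $\partial_i(gf)=f\,\partial_ig+g\,\partial_if$ and $\partial_i(g^2)=2g\,\partial_ig$, so expanding via the explicit form (\ref{form1}) yields
\begin{align*}
\mathcal{E}(gf,g)&=\sum_{\varnothing\not=B\subset I}\sum_{i\in B}\int_{E_{\scriptscriptstyle{+}}(B)} f\,(\partial_ig)^2\,d\mu_{\scriptscriptstyle{B}}^{\varrho,n,s}+\sum_{\varnothing\not=B\subset I}\sum_{i\in B}\int_{E_{\scriptscriptstyle{+}}(B)} g\,\partial_if\,\partial_ig\,d\mu_{\scriptscriptstyle{B}}^{\varrho,n,s},\\
\mathcal{E}(g^2,f)&=2\sum_{\varnothing\not=B\subset I}\sum_{i\in B}\int_{E_{\scriptscriptstyle{+}}(B)} g\,\partial_ig\,\partial_if\,d\mu_{\scriptscriptstyle{B}}^{\varrho,n,s}.
\end{align*}
The cross terms involving $g\,\partial_ig\,\partial_if$ cancel in $2\mathcal{E}(gf,g)-\mathcal{E}(g^2,f)$, so that
\begin{align*}
2\mathcal{E}(gf,g)-\mathcal{E}(g^2,f)=2\sum_{\varnothing\not=B\subset I}\sum_{i\in B}\int_{E_{\scriptscriptstyle{+}}(B)} f\,(\partial_ig)^2\,d\mu_{\scriptscriptstyle{B}}^{\varrho,n,s}=\int_E f\,d\tilde\nu.
\end{align*}

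\textbf{Identification and main obstacle.} Combined with the defining property of the energy measure, the previous display gives $\int_E f\,d\nu_{\scriptscriptstyle{\langle g\rangle}}=\int_E f\,d\tilde\nu$ for every $f\in\mathcal{D}$. By the extended Stone--Weierstra\ss~theorem (as used in Proposition \ref{propdensedef}), $\mathcal{D}=C^2_c(E)$ is sup-norm dense in $C^0_c(E)$ with approximating sequences contained in a common compact support; since both $\nu_{\scriptscriptstyle{\langle g\rangle}}$ and $\tilde\nu$ are positive Radon measures that are finite on compacta, dominated convergence extends the identity to all $f\in C^0_c(E)$. Riesz representation on the locally compact space $E$ then forces $\nu_{\scriptscriptstyle{\langle g\rangle}}=\tilde\nu$, which is exactly the claim. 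The computation itself is purely algebraic; the only mildly delicate step is the passage from the identity on the core $\mathcal{D}$ to the equality of Radon measures on $E$, and this is handled by the density argument just described together with the finiteness of $\tilde\nu$ on compact sets.
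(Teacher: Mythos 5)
Your proof is correct, and the core algebraic step is exactly the paper's: expand $2\mathcal{E}(gf,g)-\mathcal{E}(g^2,f)$ via the product rule and observe the cancellation of the mixed terms, which identifies the candidate measure on test functions $f\in\mathcal{D}$. Where you diverge is in the identification step. The paper does not stop at $f\in\mathcal{D}$: it approximates a general $f\in D(\mathcal{E})\cap C^0_c(E)$ by $f_k\in\mathcal{D}$ in the $\Vert\cdot\Vert_{\mathcal{E}_1}$-norm, checks that $f_kg\to fg$ in $\big(D(\mathcal{E}),\Vert\cdot\Vert_{\mathcal{E}_1}\big)$, and passes to the limit on the right-hand side using Cauchy--Schwarz (possible since $(\partial_ig)^2\in L^2(E_+(B);\mu_{\scriptscriptstyle{B}}^{\varrho,n,s})$); this verifies the defining relation of the energy measure on the full class $D(\mathcal{E})\cap C^0_c(E)$ and then invokes its uniqueness from \cite[Sect.~3.2]{FOT94}. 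You instead use the defining relation only for $f\in\mathcal{D}$, note that both $\nu_{\scriptscriptstyle{\langle g\rangle}}$ and your candidate $\tilde\nu$ are finite positive Radon measures, and conclude equality of the two measures from sup-norm density of $\mathcal{D}$ in $C^0_c(E)$ (with approximants of common compact support, as in Proposition \ref{propdensedef}) together with Riesz uniqueness. Both routes are legitimate: yours is slightly more elementary in that it avoids checking convergence of products in the form norm, at the price of leaning on the Riesz representation theorem and on the support-control refinement of the Stone--Weierstra\ss{} approximation; the paper's route stays entirely inside the Dirichlet-form framework and yields, as a by-product, the defining identity for all $f\in D(\mathcal{E})\cap C^0_c(E)$, which is the form in which the energy measure is used later (e.g.\ in Theorem \ref{theorep1} via \cite[Theo.~5.2.5]{FOT94}).
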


\begin{proof}
First let $f,g\in\mathcal{D}$. Then we have
\begin{multline*}
2\,\mathcal{E}(gf,g)-\mathcal{E}(g^2,f)\\
=2\sum_{\varnothing\not=B\subset E}\,\int_{E_+(B)}\sum_{i\in B}\partial_i\big( gf\big)\,\partial_i g\,d\mu_{\scriptscriptstyle{B}}^{\varrho,n,s}-\sum_{\varnothing\not=B\subset I}\int_{E_+(B)}\sum_{i\in B}\partial_i\big(g^2\big)\,\partial_i f\,d\mu_{\scriptscriptstyle{B}}^{\varrho,n,s}\\
=2\sum_{\varnothing\not=B\subset I}\int_{E_+(B)}\sum_{i\in B}\Big(\partial_i gf+g\partial_i f\Big)\,\partial_i g\,d\mu_{\scriptscriptstyle{B}}^{\varrho,n,s}-2\sum_{\varnothing\not=B\subset I}\int_{E_+(B)}\sum_{i\in B}g\,\partial_i g\,\partial_i f\,d\mu_{\scriptscriptstyle{B}}^{\varrho,n,s}\\
=2\sum_{\varnothing\not=B\subset I}\int_{E_+(B)}\sum_{i\in B}\Big(\big(\partial_i g\big)^2\,f+g\,\partial_i g\,\partial_i f\Big)\,d\mu_{\scriptscriptstyle{B}}^{\varrho,n,s}-2\sum_{\varnothing\not=B\subset I}\int_{E_+(B)}\sum_{i\in B}g\,\partial_i g\,\partial_i f\,d\mu_{\scriptscriptstyle{B}}^{\varrho,n,s}\\
=2\sum_{\varnothing\not=B\subset I}\int_{E_+(B)}\sum_{i\in B}\big(\partial_i g\big)^2\,f\,d\mu_{\scriptscriptstyle{B}}^{\varrho,n,s}=\int_{E}f\,\,2\sum_{\varnothing\not=B\subset I}\sum_{i\in B}\big(\partial_i g\big)^2\,d\mu_{\scriptscriptstyle{B}}^{\varrho,n,s}.
\end{multline*}
Now let $g\in\mathcal{D}$ and $f\in D(\mathcal{E})\cap C_c^0\big(E\big)$. Moreover, let $(f_k)_{k\in\mathbb{N}}\subset\mathcal{D}$ such that $f_k\to f$ in $\big(D(\mathcal{E}),\Vert\cdot\Vert_{\mathcal{E}_1}\big)$ as $k\to\infty$. One easily verifies that $f_kg\to fg$ in $\big(D(\mathcal{E}),\Vert\cdot\Vert_{\mathcal{E}_1}\big)$ as $k\to\infty$. Hence
\begin{multline*}
2\,\mathcal{E}\big(fg,g\big)-\mathcal{E}\big(g^2,f\big)=\lim_{k\to\infty}2\,\mathcal{E}\big(f_k g,g\big)-\lim_{k\to\infty}\mathcal{E}\big(g^2,f_k\big)\\
=\lim_{k\to\infty}\Big(2\,\mathcal{E}\big(f_k g,g\big)-\mathcal{E}\big(g^2,f_k\big)\Big)=2\lim_{k\to\infty}\int_{E}f_k\sum_{\varnothing\not=B\subset I}\sum_{i\in B}\big(\partial_i g\big)^2\,d\mu_{\scriptscriptstyle{B}}^{\varrho,n,s}\\
=2\,\sum_{\varnothing\not=B\subset I}\lim_{k\to\infty}\int_{E_+(B)}f_k\,\sum_{i\in B}\big(\partial_i g\big)^2\,d\mu_{\scriptscriptstyle{B}}^{\varrho,n,s}=2\,\sum_{\varnothing\not=B\subset I}\int_{E_+(B)}f\,\sum_{i\in B}\big(\partial_i g\big)^2\,d\mu_{\scriptscriptstyle{B}}^{\varrho,n,s},
\end{multline*}
since for $\varnothing\not=B\subset I$,
\begin{multline*}
\lim_{k\to\infty}\left|\sum_{i\in B}\int_{E_+(B)}f_k\,\big(\partial_i g\big)^2\,d\mu_{\scriptscriptstyle{B}}^{\varrho,n,s}-\sum_{i\in B}\int_{E_+(B)}f\,\big(\partial_i g\big)^2\,d\mu_{\scriptscriptstyle{B}}^{\varrho,n,s}\right|\\
\le\lim_{k\to\infty}\sum_{i\in B}\int_{E}\left|f_k-f\right|\,\big(\partial_i g\big)^2\,d\mu_{\scriptscriptstyle{B}}^{\varrho,n,s}\\
\le\lim_{k\to\infty}\big\Vert f_k-f\big\Vert_{L^2(E_+(B);\mu_{\scriptscriptstyle{B}}^{\varrho,n,s})}\,\underbrace{\sum_{i\in B}\big\Vert \big(\partial_ig\big)^2\big\Vert_{L^2(E_+(B);\mu_{\scriptscriptstyle{B}}^{\varrho,n,s})}}_{<\infty}\to 0\quad\text{as }k\to\infty,
\end{multline*}
by H\"older inequality. Hence 
\begin{align*}
2\,\mathcal{E}(gf,g)-\mathcal{E}(g^2,f)=\int_{E}f\,\,2\sum_{\varnothing\not=B\subset I}\sum_{i\in B}\big(\partial_i g\big)^2\,d\mu_{\scriptscriptstyle{B}}^{\varrho,n,s},\quad g\in\mathcal{D}\text{ and }f\in D(\mathcal{E})\cap C_c^0\big(E\big).
\end{align*}
\end{proof}

\begin{proposition}\label{propmeasureform}
Suppose Conditions \ref{conddensity}, \ref{condHamza} and \ref{condweakdiff} are satisfied. Let $f,g\in\mathcal{D}\subset D(\mathcal{E})$. Then
\begin{align*}
\mathcal{E}\big(f,g\big)=\big\langle\nu_f,g\big\rangle:=\int_{E}g\,d\nu_f
\end{align*}
with 
\begin{align*}
\nu_f:=\sum_{B\subset I}\Bigg(\sum_{i\in B}\Big(-\partial^2_i f-\partial_i f\partial_i \ln(\varrho)\Big)\Bigg)\,\lambda^{n,s,\varrho}_{\scriptscriptstyle{B}}-\frac{1}{s}\sum_{B\subsetneq I}\Big(\sum_{i\in I\setminus B}\partial_i f\Big)\,\lambda^{n,s,\varrho}_{\scriptscriptstyle{B}}.
\end{align*}
\end{proposition}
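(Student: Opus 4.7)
The plan is to directly reuse the integration by parts identity (\ref{ibp}) already derived in the proof of Proposition \ref{propibp}. The crucial observation is that the Wentzell boundary condition was only invoked at the very last step there, in order to annihilate the boundary correction terms; formula (\ref{ibp}) itself is valid for every pair $f, g \in \mathcal{D} = C_c^2(E)$ under Conditions \ref{conddensity}, \ref{condHamza} and \ref{condweakdiff}, without any further restriction on $f$. Thus the proof proposed here essentially amounts to repackaging (\ref{ibp}) as a single integral $\int_E g \, d\nu_f$.

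Concretely, I would regroup the terms of (\ref{ibp}) stratum by stratum over $B \subset I$. Each stratum $E_+(B)$ contributes, on the one hand, the bulk integral $\int_{E_+(B)} g \sum_{i\in B}(-\partial_i^2 f - \partial_i f\,\partial_i \ln \varrho)\, d\mu_B^{\varrho,n,s}$, and, on the other hand (only when $B \subsetneq I$), the boundary correction $-\tfrac{1}{s}\int_{E_+(B)} g \sum_{i\in I\setminus B} \partial_i f \, d\mu_B^{\varrho,n,s}$. The isolated summand $-\tfrac{1}{s}\,s^n \varrho(0) g(0) \sum_{i\in I} \partial_i f(0)$ that appears at the very end of (\ref{ibp}) is then recognised as precisely the $B = \emptyset$ case, since $E_+(\emptyset) = \{0\}$ and $\mu_\emptyset^{\varrho,n,s} = s^n \varrho(0)\,\delta_0$. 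Summing all stratum-wise contributions yields exactly the measure $\nu_f$ in the statement, once one reads $\lambda_B^{n,s,\varrho}$ as shorthand for $\mu_B^{\varrho,n,s} = \varrho \lambda_B^{n,s}$.

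I do not anticipate any substantive analytical obstacle: the hard work (the iterated integration by parts, the control of the successive boundary contributions from $\#B = n, n-1, \ldots, 1$, and the use of Condition \ref{condweakdiff} to convert $\partial_i \varrho$ into $\varrho\,\partial_i\ln\varrho$) has already been carried out in Proposition \ref{propibp}. Moreover, since $f \in C_c^2(E)$ and $\partial_i\ln\varrho$ is continuous on $E$ by Condition \ref{condweakdiff}(ii), all integrands in sight are continuous with compact support, so the regrouping is legitimate and $\nu_f$ is a well-defined signed Radon measure of bounded total variation on $E$. The only care needed is the combinatorial bookkeeping, in particular to ensure that the two summation ranges ($B \subset I$ versus $B \subsetneq I$) in the definition of $\nu_f$ correctly encode both the absence of a correction term when $B = I$ and the emergence of the pinning-at-origin contribution when $B = \emptyset$.
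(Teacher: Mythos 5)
Your proposal is correct and coincides with the paper's own argument: the proof of Proposition \ref{propmeasureform} consists precisely in invoking the integration by parts identity (\ref{ibp}) from the proof of Proposition \ref{propibp}, which holds for all $f,g\in\mathcal{D}$ since the Wentzell condition enters only in the final step there. Your stratum-by-stratum regrouping, including the identification of the final term in (\ref{ibp}) as the $B=\varnothing$ contribution with $\mu_{\varnothing}^{\varrho,n,s}=s^n\varrho(0)\,\delta_0$ and the reading $\lambda_{B}^{n,s,\varrho}=\varrho\,\lambda_{B}^{n,s}$, is exactly the bookkeeping implicit in the paper's one-line proof.
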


\begin{proof}
This representation is valid due to the integration by parts carried out in the proof of Proposition \ref{propibp}.
\end{proof}

Next we recall the definition of a \emph{positive, continuous, additive functional} (see e.g.~\cite[Appendix A.2,~A.3]{FOT94}).

\begin{definition}[\emph{additive functional}]
A family $\big(A_t\big)_{t\ge 0}$ of extended real valued functions $A_t:\Omega\to\overline{\mathbb{R}}$, with $\overline{\mathbb{R}}:=\mathbb{R}\cup\{-\infty,\infty\}$, is called \emph{additive functional} (\emph{A}F in abbreviation) of $\mathbf{M}$ if it satisfies the following conditions:
\begin{enumerate}
\item[(A1)]
$A_t$ is $\mathbf{F}_t$-measurable for each $t\ge 0$.
\item[(A2)]
There exists $\Lambda\in\mathbf{F}_{\scriptscriptstyle{\infty}}$ with $\mathbf{P}^{n,s,\varrho}_x(\Lambda)=1$ for all $x\in E$, $\mathbf{\Theta}_t\Lambda\subset\Lambda$ for all $t>0$ and for each $\omega\in\Lambda$, $t\mapsto A_t(\omega)$ is right continuous and has left limit on $\big[0,\infty\big)$ satisfying
\begin{enumerate}
\item[(i)]
$A_0(\omega)=0$, and 
\item[(ii)]
$A_{t+\tau}(\omega)=A_t(\omega)+A_\tau(\mathbf{\Theta}_t\omega)$ for all $t,\tau\ge 0$.
\end{enumerate}
\end{enumerate}
The set $\Lambda$ in the above is called a \emph{defining set} for $\big(A_t\big)_{t\ge 0}$. An $\big(A_t\big)_{t\ge 0}$ is said to be \emph{finite} if $\big|A_t(\omega)\big|<\infty$ for all $t\in[0,\infty)$ and each $\omega$ in a defining set.
An $\big(A_t\big)_{t\ge 0}$ is said to be \emph{continuous} if $[0,\infty)\ni t\mapsto A_t(\omega)\in\overline{\mathbb{R}}$ is continuous for each $\omega$ in a defining set. A continuous AF $\big(A_t\big)_{t\ge 0}$ consisting of a family of $[0,\infty]$-valued functions $A_t:\Omega\to[0,\infty]$ is called a \emph{positive continuous AF} (\emph{PCAF} in abbreviation). The set of all PCAFs we denote by $\mathbf{A}^+_{c}$. Moreover, we call an AF which is also a square integrable martingale with respect to $\big(\mathbf{F}_t\big)_{t\ge 0}$ a \emph{martingale AF} (\emph{MAF} in abbreviation).  
\end{definition}

\begin{lemma}\label{lemPACF}
Suppose that Conditions \ref{conddensity} and \ref{condHamza} are satisfied. Let $0\le g\in C^0\big(E\big)$ and $M\in\mathcal{B}_{\scriptscriptstyle{E}}$ Then $A:=(A_t)_{t\ge 0}$ with
\begin{align*}
A_t(\omega):=\int_0 ^{t}g\big(\mathbf{X}_{\tau}(\omega)\big)\,\mathbbm{1}_{M}\big(\mathbf{X}_{\tau}(\omega)\big)\,d\tau,\quad\omega\in \Omega,
\end{align*}
is a PCAF, i.e., $A\in\mathbf{A}^+_{c}$. If $g$ is bounded, $A$ is even finite.
\end{lemma}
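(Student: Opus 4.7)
The plan is to verify the defining conditions (A1), (A2)(i), (A2)(ii), positivity, and continuity directly, exploiting the fact that $\mathbf{M}$ is canonical, so $\mathbf{\Omega} = C^0([0,\infty),E)$ and $\mathbf{X}_\tau(\omega) = \omega(\tau)$. I would take the defining set to be all of $\mathbf{\Omega}$ (or at worst the subset where the integrand is locally integrable, to be dealt with last).

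First I would check (A1). For each fixed $\omega$, the map $\tau \mapsto \mathbf{X}_\tau(\omega) = \omega(\tau)$ is continuous into $E$, hence $\tau \mapsto g(\mathbf{X}_\tau(\omega))$ is continuous (so Borel) and $\tau \mapsto \mathbbm{1}_M(\mathbf{X}_\tau(\omega))$ is Borel (as composition of a Borel map with a continuous one); thus the integrand is a nonnegative Borel function of $\tau$ and $A_t(\omega) \in [0,\infty]$ is well-defined. To obtain joint measurability in $(\tau,\omega)$, I would note that $(\tau,\omega) \mapsto \mathbf{X}_\tau(\omega)$ is jointly measurable with respect to $\mathcal{B}([0,t]) \otimes \mathbf{F}_t$ (for $\tau \le t$) because each $\mathbf{X}_\tau$ is $\mathbf{F}_\tau$-measurable and the paths are continuous, so a standard approximation via dyadic time-discretization gives joint measurability. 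Composing with the Borel map $x \mapsto g(x)\mathbbm{1}_M(x)$ and applying Fubini/Tonelli then yields that $\omega \mapsto A_t(\omega)$ is $\mathbf{F}_t$-measurable.

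Next, (A2)(i) is immediate since the integral over $[0,0]$ is $0$. For the additivity (A2)(ii), I would use the change of variables $\tau = t + u$ together with $\mathbf{X}_{t+u}(\omega) = \mathbf{X}_u(\mathbf{\Theta}_t \omega)$ (which is exactly the defining property of the shift on canonical path space), so that
\begin{equation*}
A_{t+s}(\omega) = \int_0^t g(\mathbf{X}_\tau(\omega))\mathbbm{1}_M(\mathbf{X}_\tau(\omega))\,d\tau + \int_0^s g(\mathbf{X}_u(\mathbf{\Theta}_t\omega))\mathbbm{1}_M(\mathbf{X}_u(\mathbf{\Theta}_t\omega))\,du = A_t(\omega) + A_s(\mathbf{\Theta}_t\omega).
\end{equation*}
Continuity of $t \mapsto A_t(\omega)$ follows because the integrand is locally bounded by $\sup_{\tau \in [0,t]} g(\omega(\tau)) < \infty$ (since $\omega([0,t])$ is compact in $E$ and $g$ is continuous), so $A_t$ is even Lipschitz on compact time intervals for each $\omega$. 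Nonnegativity is clear from $g \ge 0$, and if $g$ is globally bounded by some constant $C$, then $A_t(\omega) \le Ct < \infty$ so $A$ is finite.

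The only potentially delicate point is the joint measurability needed for (A1); everything else is essentially bookkeeping using the canonical structure of $\mathbf{M}$ and continuity of sample paths. Once joint measurability is in hand, all conditions (A1), (A2)(i), (A2)(ii), continuity, and positivity are verified on the defining set $\Lambda = \mathbf{\Omega}$, completing the proof that $A \in \mathbf{A}^+_c$, with finiteness when $g$ is bounded.
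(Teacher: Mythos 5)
Your proof is correct and follows essentially the same route as the paper: verify (A1), (A2)(i), (A2)(ii), continuity and positivity directly, using the canonical continuous-path structure for the defining set and the shift identity $\mathbf{X}_{t+u}(\omega)=\mathbf{X}_u(\mathbf{\Theta}_t\omega)$ for additivity. You merely spell out the joint-measurability and local-boundedness details that the paper treats as immediate ("clearly $\mathbf{F}_t$-measurable", "continuous since $g$ is continuous"), which is fine.
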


\begin{proof}
Clearly, $A_t$ is $\mathbf{F}_t$-measurable for each $t\ge 0$. Since $\mathbf{M}$ is a diffusion process of infinite life time we can choose as defining set $\Lambda$ the set of all $\omega\in\Omega$ such that $\mathbf{X}_{\cdot}(\omega)$ is continuous. For all $t_1,t_2\ge 0$ and $\omega\in\Omega$ we have
\begin{multline*}
A_{t_1+t_2}(\omega)=\int_0^{t_1+t_2}g\big(\mathbf{X}_{\tau}\big)\mathbbm{1}_M\big(\mathbf{X}_{\tau}(\omega)\big)\,d\tau\\
=\int_0^{t_2}g\big(\mathbf{X}_{\tau}\big)\mathbbm{1}_M\big(\mathbf{X}_{\tau}(\omega)\big)\,d\tau+\int_{t_2}^{t_1+t_2}g\big(\mathbf{X}_{\tau}\big)\mathbbm{1}_M\big(\mathbf{X}_{\tau}(\omega)\big)\,d\tau\\
=A_{t_2}(\omega)+\int_{t_2}^{t_1+t_2}g\big(\mathbf{X}_{\tau-t_2}\circ\mathbf{\Theta}_{t_2}(\omega)\big)\mathbbm{1}_M\big(\mathbf{X}_{\tau-t_2}\circ\mathbf{\Theta}_{t_2}(\omega)\big)\,d\tau\\
=A_{t_2}(\omega)+\int_{0}^{t_1}g\big(\mathbf{X}_{\tau}\circ\mathbf{\Theta}_{t_2}(\omega)\big)\mathbbm{1}_M\big(\mathbf{X}_{\tau}\circ\mathbf{\Theta}_{t_2}(\omega)\big)\,d\tau
=A_{t_2}(\omega)+A_{t_1}(\mathbf{\Theta}_{t_2}\big(\omega)\big)
\end{multline*}
and in particular, $A_0(\omega)=0$. $A$ is a continuous AF, since $g$ is continuous. $A$ is clearly positive and in case of bounded $g$ finite.
\end{proof}
Given $\mathbf{M}$ and a positive measure $\mu$ on $\big(E,\mathcal{B}_{\scriptscriptstyle{E}}\big)$ we define a positive measure $\mathbf{P}_\mu$ on $(\mathbf{\Omega},\mathbf{F})$ by
\begin{align*}
\mathbf{P}_\mu(\Gamma):=\int_E\mathbf{P}^{n,s,\varrho}_x(\Gamma)\,d\mu(x),\quad\Gamma\in\mathbf{F}.
\end{align*}
Now we want to assign to the measures $\nu_{\scriptscriptstyle{\langle g\rangle}}$ from Proposition \ref{propenergymeasure} and $\nu_f$ from Proposition \ref{propmeasureform} the corresponding \emph{additive functionals (AFs)}. In order to do this we make use of \cite[Theo.~5.1.3]{FOT94}. 

We consider the following classes of measures.

 \begin{definition}[\emph{smooth measure}, \emph{measure of finite energy integral}]
We denote by $\mathbf{S}$ the family of \emph{smooth measures}, i.e., all positive Borel measures $\mu$ on $\mathcal{B}_{\scriptscriptstyle{E}}$ such that $\mu$ charges no set of $\mathcal{E}$-capacity zero and there exists an increasing sequence $\big(F_k\big)_{k\in\mathbb{N}}$ of closed sets in $E$ such that $\mu\big(F_k\big)<\infty$ for all $k\in\mathbb{N}$ and $\lim_{k\to\infty}\text{cap}_{\mathcal{E}}\big(K\setminus F_k\big)=0$ for any compact set $K\subset E$. Here $\text{cap}_{\mathcal{E}}\big(S\big)$ denotes the $\mathcal{E}$-capacity of a set $S\subset E$.  

A positive Radon measure $\mu$ on $\mathcal{B}_{\scriptscriptstyle{E}}$ is said to be of \emph{finite energy integral} if
\begin{align*}
\int_E\big|f\big|\,d\mu\le C_4\sqrt{\mathcal{E}_1\big(f,f\big)},\quad f\in D(\mathcal{E})\cap C_c(E),
\end{align*}
for some $C_4\in(0,\infty)$. We denote by $\mathbf{S}_{\scriptscriptstyle{0}}$ the set of all positive Radon measures of finite energy integrals.
\end{definition}

\begin{remark}\label{remUpot}
A positive Radon measure $\mu$ on $\mathcal{B}_{\scriptscriptstyle{E}}$ is of finite energy integral if and only if there exists for each $\alpha>0$ a unique $U_\alpha\mu\in D\big(\mathcal{E}\big)$ such that
\begin{align*}
\mathcal{E}_\alpha\big(U_\alpha\mu,f\big)=\int_E f\,d\mu\quad\mbox{for all }f\in D\big(\mathcal{E}\big)\cap C_c(E),
\end{align*}
where $\mathcal{E}_\alpha(\cdot,\cdot):={\mathcal{E}(\cdot,\cdot)}+{\alpha\,\big(\cdot,\cdot\big)_{L^2(E;\mu_{n,s,\varrho})}}$.
\end{remark}

\begin{definition}[\emph{$\alpha$-potential}]
We call $U_\alpha\mu$ from Remark \ref{remUpot} an \emph{$\alpha$-potential} and denote by $\mathbf{S}_{\scriptscriptstyle{00}}$ the set of all finite $\mu\in \mathbf{S}_{\scriptscriptstyle{0}}$ such that $\Vert U_1\mu\Vert_{\scriptscriptstyle{L^\infty(E;\mu_{n,s,\varrho})}}<\infty$. 
\end{definition}

\begin{lemma}\label{lemS00}
Let $\mu\in\mathbf{S}_{\scriptscriptstyle{00}}$ be a finite measure and $g:E\to [0,\infty)$ measurable and bounded. Then ${\mu_g}:=g\mu\in\mathbf{S}_{\scriptscriptstyle{00}}$.
\end{lemma}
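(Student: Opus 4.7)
The plan is to verify the three defining properties of $\mathbf{S}_{\scriptscriptstyle{00}}$ for $\mu_g$ in turn. Fix $C\in[0,\infty)$ with $g\le C$ on $E$.

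First, finiteness and the Radon property are immediate: since $\mu_g(A)\le C\,\mu(A)$ for every $A\in\mathcal{B}_{\scriptscriptstyle{E}}$, the measure $\mu_g$ is a finite positive Borel measure dominated by the finite positive Radon measure $C\mu$, hence itself a finite positive Radon measure. Next, $\mu_g\in\mathbf{S}_{\scriptscriptstyle{0}}$: for every $f\in D(\mathcal{E})\cap C_c(E)$,
\begin{align*}
\int_E|f|\,d\mu_g\le C\int_E|f|\,d\mu\le C\cdot C_4\,\sqrt{\mathcal{E}_1(f,f)},
\end{align*}
where $C_4$ is the constant witnessing $\mu\in\mathbf{S}_{\scriptscriptstyle{0}}$. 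By Remark \ref{remUpot} the $1$-potential $U_1\mu_g\in D(\mathcal{E})$ exists and satisfies $\mathcal{E}_1(U_1\mu_g,f)=\int_E fg\,d\mu$ for every $f\in D(\mathcal{E})\cap C_c(E)$.

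The main step is to prove $\Vert U_1\mu_g\Vert_{L^\infty(E;\mu_{n,s,\varrho})}<\infty$, which I would achieve by comparing $U_1\mu_g$ with $C\cdot U_1\mu$. Set $h:=C\cdot U_1\mu-U_1\mu_g\in D(\mathcal{E})$; then for every nonnegative $f\in D(\mathcal{E})\cap C_c(E)$,
\begin{align*}
\mathcal{E}_1(h,f)=\int_E f(C-g)\,d\mu\ge 0.
\end{align*}
By $\mathcal{E}_1$-continuity of both sides (using $\mu,\mu_g\in\mathbf{S}_{\scriptscriptstyle{0}}$ together with the regularity of $(\mathcal{E},D(\mathcal{E}))$) the inequality extends to all nonnegative $f\in D(\mathcal{E})$. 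Applying it to $f=h^-$, which lies in $D(\mathcal{E})$ by the unit contraction (Markov) property of $(\mathcal{E},D(\mathcal{E}))$, and using the standard Dirichlet form estimate $\mathcal{E}_1(h,h^-)\le -\mathcal{E}_1(h^-,h^-)$, one obtains $\mathcal{E}_1(h^-,h^-)=0$, so $h^-=0$ $\mu_{n,s,\varrho}$-a.e. An analogous argument with $U_1\mu_g$ in place of $h$, using $\mu_g\ge 0$, yields $U_1\mu_g\ge 0$ $\mu_{n,s,\varrho}$-a.e. Combining both gives $0\le U_1\mu_g\le C\cdot U_1\mu$ $\mu_{n,s,\varrho}$-a.e., so
\begin{align*}
\Vert U_1\mu_g\Vert_{L^\infty(E;\mu_{n,s,\varrho})}\le C\cdot\Vert U_1\mu\Vert_{L^\infty(E;\mu_{n,s,\varrho})}<\infty,
\end{align*}
since $\mu\in\mathbf{S}_{\scriptscriptstyle{00}}$.

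The main obstacle is the comparison step in the previous paragraph; it rests on two standard consequences of the Markov property of $(\mathcal{E},D(\mathcal{E}))$, namely that negative parts of elements of $D(\mathcal{E})$ remain in $D(\mathcal{E})$ and that $\mathcal{E}(h,h^-)\le -\mathcal{E}(h^-,h^-)$. Everything else is direct bookkeeping from the definitions of $\mathbf{S}_{\scriptscriptstyle{0}}$ and $\mathbf{S}_{\scriptscriptstyle{00}}$.
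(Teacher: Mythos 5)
Your proof is correct and follows essentially the same route as the paper: both verify that $\mu_g$ is finite and of finite energy integral and then deduce $0\le U_1\mu_g\le C\,U_1\mu$ $\mu_{n,s,\varrho}$-a.e.\ from $\mathcal{E}_1\big(C\,U_1\mu-U_1\mu_g,f\big)\ge 0$ for nonnegative $f\in D(\mathcal{E})\cap C_c(E)$, which gives the $L^\infty$-bound on $U_1\mu_g$. The only difference is that the paper obtains the positivity of $C\,U_1\mu-U_1\mu_g$ by citing \cite[Theo.~2.2.1]{FOT94}, whereas you reprove it by hand via the negative part $h^-$ and the Markov property (together with the standard approximation of nonnegative elements of $D(\mathcal{E})$ by nonnegative elements of $D(\mathcal{E})\cap C_c(E)$), which is a correct, slightly more self-contained rendering of the same step.
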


\begin{proof}
Since the finite measure $\mu$ is of finite energy integral and $g$ is bounded, ${\mu_g}$ is bounded and of finite energy integral. Let $h\in D\big(\mathcal{E}\big)\cap C_c(E)$ such that $h\ge 0$. We have that $g\le C_5$ for some $C_5\in(0,\infty)$ and
\begin{align*}
\mathcal{E}_1\big(U_1{\mu_g},h\big)=\int_E h\,d{\mu_g}=\int_Eh\,g\,d\mu\le C_5\int_E h\,d\mu=C_5\,\mathcal{E}_1\big(U_1\mu,h\big)=\mathcal{E}_1\big(C_5\,U_1\mu,h\big).
\end{align*}
Hence
\begin{align*}
\mathcal{E}_1\big(C_5\,U_1\mu-U_1{\mu_g},h\big)\ge 0\quad\text{for all }h\in D\big(\mathcal{E}\big)\cap C_c(E)\text{ such that }h\ge 0.
\end{align*}
By \cite[Theo.~2.2.1]{FOT94} this implies that $C_5\,U_1\mu-U_1{\mu_g}\ge 0$. Thus $0\le U_1{\mu_g}\le C_5\,U_1\mu$ and therefore,
\begin{align*}
\Vert U_1{\mu_g}\Vert_{\scriptscriptstyle{L^\infty(E;\mu_{n,s,\varrho})}}\le C_5\,\Vert U_1{\mu}\Vert_{\scriptscriptstyle{L^\infty(E;\mu_{n,s,\varrho})}}<\infty.
\end{align*}
\end{proof}

Let $t>0$, $\mu\in \mathbf{S}$, $A\in\mathbf{A}^+_{c}$ and $f,h:E\to[0,\infty)$ measurable. Then we consider 
\begin{align}\label{equRevuz1}
\mathbb{E}_{h \mu_{n,s,\varrho}}\Big(\big(fA\big)_t\Big):=\int_{\mathbf{\Omega}}\int_{0}^tf\big(\mathbf{X}_\tau\big)\,dA_{\tau}\,d\mathbf{P}_{h\cdot\mu_{n,s,\varrho}}
\end{align}
and
\begin{multline}\label{equRevuz2}
\int_0^t\left\langle f\mu,p_\tau h\right\rangle\,d\tau:=\int_0^t\int_E \left(\widetilde{T_\tau h}\right)f\,d\mu\,d\tau\\
=\int_0^t\int_E\int_{\mathbf{\Omega}}h\big(\mathbf{X}_\tau(\omega)\big)\,d\mathbf{P}^{n,s,\varrho}_x(\omega)\,f(x)\,d\mu(x)\,d\tau,
\end{multline}
where for $\tau\ge 0$, $p_\tau h:=\widetilde{T_\tau h}$ denotes an $\mathcal{E}$-quasi continuous version of ${T_\tau h}$.

\begin{definition}[\emph{Revuz correspondence}]\label{defRevuz}
A measure $\mu\in \mathbf{S}$ and a AF $A\in\mathbf{A}^+_{c}$ are said to be in \emph{Revuz correspondence} if and only if equality of (\ref{equRevuz1}) and (\ref{equRevuz2}) holds for all $f,h:E\to[0,\infty)$ measurable.
\end{definition}

\begin{theorem}\label{theoRevuz}
Suppose Conditions \ref{conddensity}, \ref{condHamza} and \ref{condweakdiff} are satisfied. Then for $B\subset I$ the positive Radon measure $\mu_{\scriptscriptstyle{B}}:=\mu^{n,\varrho}_{\scriptscriptstyle{B}}:=\varrho\,\lambda^{\scriptscriptstyle{(n)}}_{\scriptscriptstyle{B}}$ is an element of $\mathbf{S}_{\scriptscriptstyle{00}}$ and in Revuz correspondence with the PCAF $\big(A_t^{\scriptscriptstyle{B}}\big)_{t\ge 0}$ given by 
\begin{align*}
A^{\scriptscriptstyle{B}}_t:=A^{n,s,\scriptscriptstyle{B}}_t:=\frac{1}{s^{n-\# B}}\int_{0}^t\mathbbm{1}_{E_+(B)}\big(\mathbf{X_\tau}\big)\,d\tau.
\end{align*}
\end{theorem}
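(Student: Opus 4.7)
The plan is to verify the theorem in three steps: first confirm that $A^B=(A_t^B)_{t\ge 0}$ is a finite PCAF, then check the Revuz correspondence identity directly by a symmetry computation, and finally upgrade $\mu_B$ from a smooth measure to an element of $\mathbf{S}_{00}$ using the boundedness of the corresponding $\alpha$-potential.

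\medskip

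\noindent\textbf{Step 1 (PCAF).} Apply Lemma \ref{lemPACF} with $g\equiv 1/s^{n-\#B}$ (bounded) and $M:=E_{\scriptscriptstyle{+}}(B)\in\mathcal{B}_{\scriptscriptstyle{E}}$. This yields $A^B\in\mathbf{A}_c^+$, and in fact $0\le A_t^B\le t/s^{n-\#B}$. In particular, the $\alpha$-potential
\[
U_\alpha^{A^B}(x):=\mathbb{E}_x^{n,s,\varrho}\!\left[\int_0^\infty e^{-\alpha\tau}\,dA_\tau^B\right]
\]
is uniformly bounded by $1/(\alpha\,s^{n-\#B})$.

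\medskip

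\noindent\textbf{Step 2 (Revuz correspondence).} I would verify $(\ref{equRevuz1})=(\ref{equRevuz2})$ for $\mu=\mu_B$ and arbitrary measurable $f,h:E\to[0,\infty)$. By Fubini and the definition of $A^B$,
\[
\mathbb{E}_{h\mu_{n,s,\varrho}}\!\big((fA^B)_t\big)
=\int_0^t\!\!\int_E h\,\mathbb{E}_{\cdot}^{n,s,\varrho}\!\big[f(\mathbf{X}_\tau)\,\tfrac{1}{s^{n-\#B}}\mathbbm{1}_{E_{+}(B)}(\mathbf{X}_\tau)\big]\,d\mu_{n,s,\varrho}\,d\tau.
\]
Using $\mu_{n,s,\varrho}$-symmetry of $(T_t)_{t>0}$ from Theorem \ref{theoprocess},
\[
\int_E h\,T_\tau\!\big(\tfrac{1}{s^{n-\#B}}f\mathbbm{1}_{E_{+}(B)}\big)\,d\mu_{n,s,\varrho}
=\int_E (T_\tau h)\,f\,\tfrac{1}{s^{n-\#B}}\mathbbm{1}_{E_{+}(B)}\,d\mu_{n,s,\varrho}.
\]
The crucial measure-theoretic identity is
\[
\tfrac{1}{s^{n-\#B}}\mathbbm{1}_{E_{+}(B)}\,d\mu_{n,s,\varrho}=d\mu_{\scriptscriptstyle{B}},
\]
which follows from (\ref{defmeasure}) and the fact that $\lambda_A^{n,s}$ is concentrated on $E_{+}(A)$, so the sets $\{E_{+}(A)\}_{A\subset I}$ being disjoint forces $\mu_{n,s,\varrho}|_{E_{+}(B)}=\varrho\,\lambda_B^{n,s}=s^{n-\#B}\,\varrho\,\lambda_B^{\scriptscriptstyle{(n)}}=s^{n-\#B}\mu_{\scriptscriptstyle{B}}$. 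Hence the left-hand side of (\ref{equRevuz1}) equals $\int_0^t\!\int_E f\,T_\tau h\,d\mu_{\scriptscriptstyle{B}}\,d\tau$. Since $\mu_{\scriptscriptstyle{B}}\ll\mu_{n,s,\varrho}$, the $\mathcal{E}$-quasi continuous version $\widetilde{T_\tau h}=p_\tau h$ agrees with $T_\tau h$ up to a $\mu_{\scriptscriptstyle{B}}$-null set, so this coincides with (\ref{equRevuz2}). By Definition \ref{defRevuz} and \cite[Theo.~5.1.3]{FOT94}, $\mu_{\scriptscriptstyle{B}}\in\mathbf{S}$ is the Revuz measure of $A^B$.

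\medskip

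\noindent\textbf{Step 3 (Upgrade to $\mathbf{S}_{00}$).} Finiteness of $\mu_{\scriptscriptstyle{B}}$ is immediate: $\mu_{\scriptscriptstyle{B}}(E)=s^{\#B-n}\int_E\varrho\,d\lambda_B^{n,s}\le s^{\#B-n}\mu_{n,s,\varrho}(E)<\infty$ by Condition \ref{conddensity}. Because $\mu_{n,s,\varrho}$ is a finite measure, the uniformly bounded function $U_1^{A^B}$ lies in $L^2(E;\mu_{n,s,\varrho})$; standard Dirichlet form theory (\cite[Theo.~5.1.3]{FOT94} and its consequences on $\alpha$-potentials of PCAFs) then gives $U_1^{A^B}\in D(\mathcal{E})$ together with the identity
\[
\mathcal{E}_1\!\big(U_1^{A^B},f\big)=\int_E f\,d\mu_{\scriptscriptstyle{B}}\quad\text{for all}\quad f\in D(\mathcal{E})\cap C_c(E),
\]
so $\mu_{\scriptscriptstyle{B}}\in\mathbf{S}_{\scriptscriptstyle{0}}$ with $U_1\mu_{\scriptscriptstyle{B}}=U_1^{A^B}$. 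Finally, $\Vert U_1\mu_{\scriptscriptstyle{B}}\Vert_{L^\infty(E;\mu_{n,s,\varrho})}=\Vert U_1^{A^B}\Vert_{L^\infty(E;\mu_{n,s,\varrho})}\le 1/s^{n-\#B}<\infty$, proving $\mu_{\scriptscriptstyle{B}}\in\mathbf{S}_{\scriptscriptstyle{00}}$.

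\medskip

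\noindent\textbf{Main obstacle.} The delicate point is the disjointness/concentration argument that identifies $\tfrac{1}{s^{n-\#B}}\mathbbm{1}_{E_{+}(B)}\,d\mu_{n,s,\varrho}=d\mu_{\scriptscriptstyle{B}}$; once this is in hand, symmetry of the semigroup turns the expectation in (\ref{equRevuz1}) directly into the dual pairing in (\ref{equRevuz2}), and the $\mathbf{S}_{\scriptscriptstyle{00}}$-membership is a soft consequence of the uniform bound on $U_1^{A^B}$.
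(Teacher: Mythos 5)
Your Steps 1 and 2 are exactly the paper's argument: the PCAF property comes from Lemma \ref{lemPACF}, and the Revuz correspondence is verified by the same computation the paper performs (the identity $\mathbbm{1}_{E_+(B)}\,d\mu_{n,s,\varrho}=s^{n-\#B}\,d\mu_{\scriptscriptstyle B}$ together with the $\mu_{n,s,\varrho}$-symmetry of $(T_\tau)$ and Fubini), only run in the opposite direction, which is immaterial.

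Step 3, however, has a genuine gap. You infer from ``$U_1^{A^B}$ is uniformly bounded, hence in $L^2(E;\mu_{n,s,\varrho})$'' that $U_1^{A^B}\in D(\mathcal{E})$ and that $\mathcal{E}_1\big(U_1^{A^B},f\big)=\int_E f\,d\mu_{\scriptscriptstyle B}$, citing \cite[Theo.~5.1.3]{FOT94}. That theorem (and the standard identification of $E_\cdot\big[\int_0^\infty e^{-t}\,dA_t\big]$ with a quasi-continuous version of $U_1\mu$) \emph{presupposes} $\mu\in\mathbf{S}_{\scriptscriptstyle 0}$; it does not produce the finite energy integral bound, which is precisely part of what membership in $\mathbf{S}_{\scriptscriptstyle 0}$ (and hence $\mathbf{S}_{\scriptscriptstyle 00}$) requires, nor does boundedness plus $L^2$-membership of a $1$-excessive function imply membership in $D(\mathcal{E})$. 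Indeed, boundedness of the PCAF potential alone is not a sufficient criterion in general (for a transient form the PCAF $A_t=t$ has potential $\equiv 1$ while its Revuz measure, the reference measure, need not be of finite energy integral), so the step needs the finiteness of $\mu_{\scriptscriptstyle B}$ in an essential way and an actual argument, not a black-box citation. The repair is easy and can be done in either of two ways: (a) first prove $\mu_{\scriptscriptstyle B}\in\mathbf{S}_{\scriptscriptstyle 0}$ directly, via Cauchy--Schwarz and $\mu_{\scriptscriptstyle B}\le s^{-(n-\# B)}\mu_{n,s,\varrho}$ with $\mu_{n,s,\varrho}(E)<\infty$ (Condition \ref{conddensity}), so that $U_1\mu_{\scriptscriptstyle B}$ exists in the sense of Remark \ref{remUpot}; only then identify $\widetilde{U_1\mu_{\scriptscriptstyle B}}$ with $U_1^{A^B}$ and use your uniform bound to get $\mu_{\scriptscriptstyle B}\in\mathbf{S}_{\scriptscriptstyle 00}$; or (b) argue as the paper does: from the computation in Proposition \ref{propstronglocal} one gets $U_1\mu_{n,s,\varrho}=\mathbbm{1}_E$, the decomposition $\mu_{n,s,\varrho}=\sum_{B\subset I}\mu^{n,s,\varrho}_{\scriptscriptstyle B}$ and the non-negativity of $1$-potentials force each $U_1\mu^{n,s,\varrho}_{\scriptscriptstyle B}$ to be bounded by $1$, and Lemma \ref{lemS00} absorbs the constant factor $s^{-(n-\# B)}$, yielding $\mu_{\scriptscriptstyle B}\in\mathbf{S}_{\scriptscriptstyle 00}$ without ever invoking the PCAF.
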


\begin{proof}
Let $B\subset I$. In order to show that $\mu_{\scriptscriptstyle{B}}\in \mathbf{S}_{\scriptscriptstyle{00}}$ we first prove that $\mu^{n,s,\varrho}_{\scriptscriptstyle{B}}=\varrho\,\lambda^{n,s}_{\scriptscriptstyle{B}}\in \mathbf{S}_{\scriptscriptstyle{00}}$. Since $\mu^{n,s,\varrho}_{\scriptscriptstyle{B}}$ is a finite measure by Condition \ref{conddensity} one easily checks that $\mu^{n,s,\varrho}_{\scriptscriptstyle{B}}$ is of finite energy integral. It is left to show that the corresponding $1$-potential is essentially bounded. In the same way as in the proof of Proposition \ref{propstronglocal} we obtain that $\mathcal{E}_1\big(\mathbbm{1}_E,g\big)=\int_E g\,\mu_{n,s,\varrho}$ for all $g\in D(\mathcal{E})\cap C_c(E)$, i.e., $U_1\mu_{n,s,\varrho}=\mathbbm{1}_E\in D(\mathcal{E})$. Moreover, we have that 
\begin{multline*}
\mathcal{E}_1\big(U_1\mu_{n,s,\varrho},f\big)=\int_E f\,d\mu_{n,s,\varrho}=\sum_{B\subset I}\int_E f\,d\mu^{n,s,\varrho}_{\scriptscriptstyle{B}}\\
=\sum_{B\subset I}\mathcal{E}_1\big(U_1\mu^{n,s,\varrho}_{\scriptscriptstyle{B}},f\big)\quad\mbox{for all }f\in D\big(\mathcal{E}\big)\cap C_c(E),
\end{multline*}
where the existence of $U_1\mu^{n,s,\varrho}_{\scriptscriptstyle{B}}$, $B\subset I$, is due to Remark \ref{remUpot}, since $\mu^{n,s,\varrho}_{\scriptscriptstyle{B}}$ is of finite energy integral. The fact that $1$-potentials are uniquely determined yields $U_1\mu_{n,s,\varrho}=\sum_{B\subset I}U_1\Big(\mu^{n,s,\varrho}_{\scriptscriptstyle{B}}\Big)$. By \cite[Theo.~2.2.1]{FOT94} $1$-potentials are non-negative. Therefore, all measures $\mu^{n,s,\varrho}_{\scriptscriptstyle{B}}$, $B\subset I$, have essentially bounded $1$-potentials, since $U_1\mu_{{n,s,\varrho}}$ is essentially bounded. Applying Lemma \ref{lemS00} yields $\mu_{\scriptscriptstyle{B}}\in \mathbf{S}_{\scriptscriptstyle{00}}$.

Due to Lemma \ref{lemPACF} $\big(A^{\scriptscriptstyle{B}}_t\big)_{t\ge 0}$ is a PCAF. That $\mu_{\scriptscriptstyle{B}}$ and $\big(A^{\scriptscriptstyle{B}}_t\big)_{t\ge 0}$ are in Revuz correspondence follows by 
\begin{multline*}
\int_0^t\left\langle f\,\mu_{\scriptscriptstyle{B}},p_\tau h\right\rangle\,d\tau=\frac{1}{s^{n-\# B}}\int_0^t\left\langle f\,\mathbbm{1}_{E_+(B)}\mu_{n,s,\varrho},p_\tau h\right\rangle\,d\tau\\
=\frac{1}{s^{n-\# B}}\int_0^t\int_E \left(\widetilde{T_\tau h}\right)f\mathbbm{1}_{E_+(B)}\,d\mu_{n,s,\varrho}\,d\tau=\frac{1}{s^{n-\# B}}\int_0^t\int_E h\,\left(\widetilde{T_\tau\big(f\mathbbm{1}_{E_+(B)}\big)}\right)\,d\mu_{n,s,\varrho}\,d\tau\\
=\frac{1}{s^{n-\# B}}\int_0^t\int_E\int_{\mathbf{\Omega}}\big(f\mathbbm{1}_{E_+(B)}\big)\big(\mathbf{X}_\tau(\omega)\big)\,d\mathbf{P}^{n,s,\varrho}_x(\omega)\,h(x)\,d\mu_{n,s,\varrho}(x)\,d\tau\\
=\frac{1}{s^{n-\# B}}\int_0^t\int_E\int_{\mathbf{\Omega}}f\big(\mathbf{X}_\tau(\omega)\big)\,\mathbbm{1}_{E_+(B)}\big(\mathbf{X}_\tau(\omega)\big)\,d\mathbf{P}^{n,s,\varrho}_x(\omega)\,h(x)\,d\mu_{n,s,\varrho}(x)\,d\tau\\
=\int_0^t\int_{\mathbf{\Omega}}f\big(\mathbf{X}_\tau\big)\,\frac{1}{s^{n-\# B}}\mathbbm{1}_{E_+(B)}\big(\mathbf{X}_\tau\big)\,d\mathbf{P}_{h\cdot\mu_{n,s,\varrho}}\,d\tau
=\int_{\mathbf{\Omega}}\int_0^tf\big(\mathbf{X}_\tau\big)\,\frac{1}{s^{n-\# B}}\mathbbm{1}_{E_+(B)}\big(\mathbf{X}_\tau\big)\,d\tau\,d\mathbf{P}_{h\cdot\mu_{n,s,\varrho}}\\
=\int_{\mathbf{\Omega}}\int_0^tf\big(\mathbf{X}_\tau\big)\,dA^{\scriptscriptstyle{B}}_{\tau}\,d\mathbf{P}_{h\cdot\mu_{n,s,\varrho}}=\mathbb{E}_{h\mu_{n,s,\varrho}}\Big(\big(fA^{\scriptscriptstyle{B}}\big)_t\Big)
\end{multline*}
for $f,h:E\to[0,\infty)$ measurable.
\end{proof}

\begin{proposition}\label{propS00ex}
Suppose Conditions \ref{conddensity} and \ref{condHamza}. Let $\eta\in C^0(E)$ such that ${\eta}\ge 0$.
\begin{enumerate}
\item[(i)]
If $\mu\in\mathbf{S}_{\scriptscriptstyle{00}}$ and $A_t=\int_0^t g\big(\mathbf{X}_\tau\big)\mathbbm{1}_M\big(\mathbf{X}_{\tau}\big)\,d\tau$, $t\ge 0$, as in Lemma \ref{lemPACF}, are in Revuz correspondence, then
\begin{align*}
{\mu_{\eta}}:={\eta}\mu\quad\text{and}\quad{A}^{\eta}_t:=\int_0^t {\eta}\big(\mathbf{X}_\tau\big)\,g\big(\mathbf{X}_\tau\big)\mathbbm{1}_M\big(\mathbf{X}_{\tau}\big)\,d\tau
\end{align*}
are in Revuz correspondence.
\item[(ii)]
If, moreover, ${\eta}$ has compact support, then ${\mu_{\eta}}\in\mathbf{S}_{\scriptscriptstyle{00}}$.  
\end{enumerate}
\end{proposition}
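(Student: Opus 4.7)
Both parts should reduce to quick applications of machinery already in the paper; the argument is essentially bookkeeping.

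For part (i), my plan is to verify the defining identity of the Revuz correspondence (Definition \ref{defRevuz}) for the pair $(\mu_\eta, A^\eta)$ by a change-of-test-function trick. The key observation is the pathwise identity
\[
(fA^{\eta})_t \;=\; \int_0^t f(\mathbf{X}_\tau)\,\eta(\mathbf{X}_\tau)\,g(\mathbf{X}_\tau)\,\mathbbm{1}_M(\mathbf{X}_\tau)\,d\tau \;=\; \bigl((f\eta)A\bigr)_t,
\]
which allows me to absorb the continuous weight $\eta$ into the measurable test function $f$ on the process side. Applying the assumed Revuz correspondence of $\mu$ and $A$ to the test function $f\eta$ (still nonnegative and measurable) in place of $f$, for the same $h$, then produces on the analytic side the expression $\int_0^t \langle (f\eta)\mu,\, p_\tau h\rangle\,d\tau$, which by the very definition $\mu_\eta = \eta\mu$ equals $\int_0^t \langle f\mu_\eta,\, p_\tau h\rangle\,d\tau$. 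Chaining the two identities yields the required Revuz correspondence for $(\mu_\eta, A^\eta)$.

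For part (ii), my plan is to invoke Lemma \ref{lemS00} directly. Since $\eta \in C^0(E)$ is continuous with compact support in the locally compact space $E$, it is automatically bounded, say $0 \le \eta \le C$ for some $C \in (0,\infty)$, and it is of course measurable. Combined with the standing hypothesis $\mu \in \mathbf{S}_{\scriptscriptstyle{00}}$ carried over from part (i), Lemma \ref{lemS00} applied with $\eta$ playing the role of the generic bounded weight there immediately gives $\mu_\eta = \eta\mu \in \mathbf{S}_{\scriptscriptstyle{00}}$.

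I do not anticipate any genuine obstacle. The only small verifications are the measurability and nonnegativity of $f\eta$ in (i), which are immediate from $f,\eta \ge 0$ measurable, and the boundedness of a continuous compactly supported function in (ii), which is a standard fact on locally compact Hausdorff spaces. Everything else is quoted verbatim from Definition \ref{defRevuz} and Lemma \ref{lemS00}.
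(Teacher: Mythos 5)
Your proof of part (ii) and your verification of the defining identity in part (i) coincide with the paper's: the pathwise identity $(fA^{\eta})_t=\big((\eta f)A\big)_t$, the analytic identity $\int_0^t\langle f\mu_{\eta},p_\tau h\rangle\,d\tau=\int_0^t\langle(\eta f)\mu,p_\tau h\rangle\,d\tau$, and the assumed correspondence of $(\mu,A)$ applied to the nonnegative measurable test function $\eta f$; likewise, continuity plus compact support gives boundedness of $\eta$, so Lemma \ref{lemS00} yields (ii).

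There is, however, a gap in (i). By Definition \ref{defRevuz}, Revuz correspondence is a relation between a measure belonging to $\mathbf{S}$ and a PCAF, so to assert that $\mu_{\eta}$ and $A^{\eta}$ are in Revuz correspondence you must also check that $A^{\eta}\in\mathbf{A}^+_{c}$ (immediate from Lemma \ref{lemPACF}, since $\eta g$ is continuous and nonnegative) and, more substantially, that $\mu_{\eta}\in\mathbf{S}$. The latter is not automatic: in (i) the weight $\eta$ is not assumed compactly supported, hence may be unbounded, so you can neither quote Lemma \ref{lemS00} nor reuse a nest $(F_k)_{k\in\mathbb{N}}$ for $\mu$ directly, because finiteness of $\mu_{\eta}(F_k)=\int_{F_k}\eta\,d\mu$ requires $\eta$ to be bounded on $F_k$. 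The paper devotes the first half of its proof of (i) to exactly this point: by \cite[Theo.~2.2.4]{FOT94} there is a generalized \emph{compact} nest $(F_k)_{k\in\mathbb{N}}$ with $\mathbbm{1}_{F_k}\mu\in\mathbf{S}_{\scriptscriptstyle{00}}$ and $\mu\big(E\setminus\bigcup_{k}F_k\big)=0$; then $\mathbbm{1}_{F_k}\mu_{\eta}=\big(\mathbbm{1}_{F_k}\eta\big)\mu\in\mathbf{S}_{\scriptscriptstyle{00}}$ by Lemma \ref{lemS00} (the weight $\mathbbm{1}_{F_k}\eta$ is bounded because $F_k$ is compact and $\eta$ continuous), $E\setminus\bigcup_k F_k$ is also $\mu_{\eta}$-null, and a second application of the same theorem gives $\mu_{\eta}\in\mathbf{S}$. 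You should insert this step (or an equivalent argument, e.g.\ replacing a nest for $\mu$ by the closed sets $F_k\cap\{\eta\le k\}$ and using subadditivity of the capacity together with boundedness of $\eta$ on compacts) before invoking Definition \ref{defRevuz}; the identity computation alone does not establish the stated correspondence.
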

\begin{proof}
\begin{enumerate}
\item[(i)]
Since $\mu\in\mathbf{S}_{\scriptscriptstyle{00}}\subset\mathbf{S}$, there exists a generalized compact nest $\big(F_k\big)_{k\in\mathbb{N}}$ such that $\mu\Big(E\setminus\bigcup_{n=1}^\infty F_k\Big)=0$ and $\mathbbm{1}_{F_k}\mu\in\mathbf{S}_{\scriptscriptstyle{00}}$ for each $k\in\mathbb{N}$ according to \cite[Theo.~2.2.4]{FOT94}. Hence $E\setminus\bigcup_{n=1}^\infty F_k$ is also a ${\mu_{\eta}}$-null set. Moreover, due to Lemma \ref{lemS00} we have $\mathbbm{1}_{F_k}{\mu_{\eta}}=\big(\mathbbm{1}_{F_k}{\eta}\big)\mu\in\mathbf{S}_{\scriptscriptstyle{00}}$. Another application of \cite[Theo.~2.2.4]{FOT94} yields ${\mu_{\eta}}\in\mathbf{S}$. Let $f,h:E\to[0,\infty)$ measurable. We have that
\begin{multline}\label{equRc1}
\mathbb{E}_{h\mu^{n,s,\varrho}}\big((f{A}^{\eta})_t\big)=\int_{\mathbf{\Omega}}\int_0 ^t f\big(\mathbf{X}_{\tau}\big)\,d{A}^{\eta}_\tau\,d\mathbf{P}_{h\mu_{n,s,\varrho}}\\
=\int_{\mathbf{\Omega}}\int_0 ^t \eta\big(\mathbf{X}_{\tau}\big)\,f\big(\mathbf{X}_{\tau}\big)\,d{A}_\tau\,d\mathbf{P}_{h\mu_{n,s,\varrho}}=\mathbb{E}_{h\mu^{n,s,\varrho}}\Big(\big((\eta f){A}\big)_t\Big).
\end{multline}
On the other hand,
\begin{align}\label{equRc2}
\int_0^t\left\langle f\big(\eta\mu\big),p_\tau h\right\rangle\,d\tau=\int_0^t\int_E p_\tau h\,f\,\eta\,d\mu\,d\tau=\int_0^t\left\langle \big(\eta f\big)\mu,p_\tau h\right\rangle\,d\tau.
\end{align}
Since $\eta f:E\to[0,\infty)$ is measurable and $\mu$ is in Revuz correspondence with $\big(A_t\big)_{t\ge 0}$ we obtain that (\ref{equRc1}) equals (\ref{equRc2}) and the desired statement follows.
\item[(ii)]
This follows by Lemma \ref{lemS00}.
\end{enumerate}
\end{proof}

\begin{remark}\label{remS00plus}
If $\mu_1,\mu_2\in\mathbf{S}_{\scriptscriptstyle{00}}$ with Revuz corresponding AFs $A_1$, $A_2$, respectively. Then $\mu_1+\mu_2\in\mathbf{S}_{\scriptscriptstyle{00}}$ with Revuz corresponding AF $A$ given by $A:=A_1+A_2$.
\end{remark}

\begin{theorem}\label{theorep1}
Suppose Conditions \ref{conddensity}, \ref{condHamza} and \ref{condweakdiff} are satisfied. Let $f\in\mathcal{D}$. Then
\begin{align}\label{equdecomp}
f\big(\mathbf{X}_t\big)-f\big(\mathbf{X}_0\big)=\mathbf{M}_t^{[f]}-\mathbf{N}_t^{[f]}\quad\mathbf{P}^{n,s,\varrho}_x-\text{a.s.~for q.e. }x\in E,
\end{align}
where $\mathbf{M}_t^{[f]}$ is a MAF with quadratic variation
\begin{align*}
\left\langle \mathbf{M}^{[f]} \right\rangle_t=2\sum_{\varnothing\not=B\subset I}\int_0^t\sum_{i\in B}\big(\partial_if\big)^2\big(\mathbf{X}_\tau\big)\,\mathbbm{1}_{{E_+(B)}}\big(\mathbf{X}_\tau\big)\,d\tau
\end{align*}
and
\begin{multline*}
\mathbf{N}^{[f]}_t=\int_0^t\Big(\sum_{B\subset I}\Big(\sum_{i\in B}\Big(\partial^2_i f+\partial_i f\partial_i \ln(\varrho)\Big)\Big)\big(\mathbf{X}_\tau\big)\,\mathbbm{1}_{{E_+(B)}}\big(\mathbf{X}_\tau\big)\Big)\\
+\Big(\sum_{B\subset I}\Big(\frac{1}{s}\sum_{i\in I\setminus B}\partial_i f\Big)(\mathbf{X}_\tau\big)\,\mathbbm{1}_{{E_+(B)}}\big(\mathbf{X}_\tau\big)\Big)\,d\tau.
\end{multline*}
\end{theorem}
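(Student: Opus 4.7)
The plan is to invoke the Fukushima decomposition for regular symmetric Dirichlet forms (\cite[Theo.~5.2.2]{FOT94}, specialized to the strongly local case in \cite[Sect.~5.5]{FOT94}). Since $f\in\mathcal{D}\subset D(\mathcal{E})$, this immediately yields the existence of the decomposition (\ref{equdecomp}) with $\mathbf{M}^{[f]}$ a MAF of finite energy and $\mathbf{N}^{[f]}$ a CAF of zero energy, holding $\mathbf{P}^{n,s,\varrho}_x$-a.s.~for q.e.~$x\in E$. The real work is then to identify $\langle\mathbf{M}^{[f]}\rangle$ and $\mathbf{N}^{[f]}$ explicitly as PCAFs via the Revuz correspondence.

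For the quadratic variation, $\langle\mathbf{M}^{[f]}\rangle$ is by the Fukushima theory the PCAF whose Revuz measure equals the energy measure $\nu_{\langle f\rangle}$, computed in Proposition~\ref{propenergymeasure} as the positive combination $2\sum_{\varnothing\neq B\subset I}\sum_{i\in B}(\partial_if)^2\mu_B^{\varrho,n,s}$. Theorem~\ref{theoRevuz} identifies the PCAF associated with $\mu_B^{\varrho,n,s}=s^{n-\#B}\mu_B$ as $\int_0^t\mathbbm{1}_{E_+(B)}(\mathbf{X}_\tau)\,d\tau$; multiplication by the continuous bounded factors $(\partial_if)^2$ is handled by Proposition~\ref{propS00ex}(i)--(ii), and the finite summation over $B$ and $i\in B$ by Remark~\ref{remS00plus}. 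Assembling these pieces delivers the stated formula for $\langle\mathbf{M}^{[f]}\rangle_t$.

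For $\mathbf{N}^{[f]}$, Proposition~\ref{propmeasureform} expresses $\mathcal{E}(f,g)=\int_E g\,d\nu_f$ for $g\in D(\mathcal{E})\cap C_c(E)$, where $\nu_f$ is a signed finite Radon measure of compact support whose Jordan components $\nu_f^{\pm}$ have bounded continuous density against the smooth measures $\mu_B^{\varrho,n,s}\in\mathbf{S}_{\scriptscriptstyle{00}}$. Proposition~\ref{propS00ex}(ii) therefore places both $\nu_f^{+}$ and $\nu_f^{-}$ in $\mathbf{S}_{\scriptscriptstyle{00}}$. The classical identification of the continuous part in the Fukushima decomposition (\cite[Theo.~5.1.3]{FOT94}) then recognizes $\mathbf{N}^{[f]}$ as the CAF of bounded variation corresponding via the Revuz correspondence (with the sign dictated by (\ref{equdecomp})) to $\nu_f$. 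Applying Theorem~\ref{theoRevuz}, Proposition~\ref{propS00ex}(i), and Remark~\ref{remS00plus} term by term to the expression for $\nu_f$ reconstructs the claimed pointwise integrand for $\mathbf{N}^{[f]}_t$.

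The main obstacle is the careful bookkeeping of the $2^n$-fold summation over $B\subset I$ together with the sign conventions: one must match the densities appearing in $\nu_f$ (written in Proposition~\ref{propmeasureform} relative to $\mu_B^{\varrho,n,s}$ in interior and boundary pieces) to the pointwise integrands $\sum_{i\in B}(\partial_i^2 f+\partial_if\,\partial_i\ln\varrho)$ and $\frac{1}{s}\sum_{i\in I\setminus B}\partial_if$ in $\mathbf{N}^{[f]}_t$, while correctly tracking the indicator functions $\mathbbm{1}_{E_+(B)}$ produced by the Revuz PCAFs. Once the key observation is made that the Revuz PCAF of $\mu_B^{\varrho,n,s}$ is exactly $\int_0^t\mathbbm{1}_{E_+(B)}(\mathbf{X}_\tau)\,d\tau$, the apparent combinatorial complexity collapses and both formulas in the theorem follow directly from Propositions~\ref{propenergymeasure} and~\ref{propmeasureform} together with Theorem~\ref{theoRevuz}.
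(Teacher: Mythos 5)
Your proposal is correct and follows essentially the same route as the paper: invoke the Fukushima decomposition, identify $\langle\mathbf{M}^{[f]}\rangle$ via the energy measure from Proposition \ref{propenergymeasure}, and identify $\mathbf{N}^{[f]}$ as a CAF of bounded variation by splitting $\nu_f$ from Proposition \ref{propmeasureform} into positive and negative parts in $\mathbf{S}_{\scriptscriptstyle{00}}$ and applying Theorem \ref{theoRevuz}, Proposition \ref{propS00ex} and Remark \ref{remS00plus}. The paper packages the existence and identification steps into a single citation of \cite[Theo.~5.2.5]{FOT94} rather than Theorems 5.2.2 and 5.1.3 separately, but this is the same machinery.
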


\begin{remark}
Note that the decomposition (\ref{equdecomp}) is valid $\mathbf{P}^{n,s,\varrho}_x-\text{a.s.~for q.e. }x\in E$. This is weaker then the statement in \cite[Theo.~5.2.5]{FOT94} where the decomposition holds $\mathbf{P}^{n,s,\varrho}_x-\text{a.s.~for each }x\in E$. This is caused by the fact that in our setting we do not know if the \emph{absolute continuity condition} is fulfilled.  
\end{remark}

\begin{proof}
We have to check the assumptions of \cite[Theo.~5.2.5]{FOT94}. $f\in\mathcal{D}\subset D\big(\mathcal{E}\big)$ is clearly bounded and continuous. The measure $\nu_{\scriptscriptstyle{\langle f\rangle}}\in \mathbf{S}_{\scriptscriptstyle{00}}$ due to Proposition \ref{propenergymeasure}, Theorem \ref{theoRevuz}, Proposition \ref{propS00ex}(ii) and Remark \ref{remS00plus} applied inductively. In addition, these results yield that $\nu_{\scriptscriptstyle{\langle f\rangle}}$ is in Revuz correspondence with the PCAF
\begin{align*}
2\sum_{\varnothing\not=B\subset I}\int_0^t\sum_{i\in B}\big(\partial_if\big)^2\big(\mathbf{X}_\tau\big)\,\mathbbm{1}_{{E_+(B)}}\big(\mathbf{X}_\tau\big)\,d\tau.
\end{align*}
By Proposition \ref{propmeasureform} 
\begin{align*}
\mathcal{E}\big(f,g\big)=\big\langle\nu_f,g\big\rangle=\int_{E}g\,d\nu_f
\end{align*}
with 
\begin{align*}
\nu_f=\sum_{B\subset I}\Bigg(\sum_{i\in B}\Big(-\partial^2_i f-\partial_i f\partial_i \ln(\varrho)\Big)\Bigg)\,\lambda^{n,s,\varrho}_{\scriptscriptstyle{B}}-\frac{1}{s}\sum_{B\subset I}\Big(\sum_{i\in I\setminus B}\partial_i f\Big)\,\lambda^{n,s,\varrho}_{\scriptscriptstyle{B}}
\end{align*}
for all $f,g\in\mathcal{D}$. We can split the densities contained in $\nu_{f}$ into positive and negative part. This yields two positive Radon measures $\nu_{f}^{+}$ and $\nu_{f}^{-}$ such that $\nu_{f}=\nu_{f}^{+}-\nu_{f}^{-}$. These measures belong to $\mathbf{S}_{\scriptscriptstyle{00}}$ by Theorem \ref{theoRevuz}, Proposition \ref{propS00ex} and Remark \ref{remS00plus}. We can calculate the associated PCAFs $A^{\scriptscriptstyle{+}}$ and $A^{\scriptscriptstyle{-}}$ in the same way like in the case of $\nu_{\scriptscriptstyle{\langle f\rangle}}$. By \cite[Theo.~5.2.5]{FOT94} $\mathbf{N}^{[f]}_t=-A^{\scriptscriptstyle{+}}+A^{\scriptscriptstyle{-}}$ and we obtain that
\begin{multline*}
\mathbf{N}^{[f]}_t=\int_0^t\Big(\sum_{B\subset I}\Big(\sum_{i\in B}\Big(\partial^2_i f+\partial_i f\partial_i \ln(\varrho)\Big)\Big)\big(\mathbf{X}_\tau\big)\,\mathbbm{1}_{{E_+(B)}}\big(\mathbf{X}_\tau\big)\Big)\\
+\Big(\sum_{B\subset I}\Big(\frac{1}{s}\sum_{i\in I\setminus B}\partial_i f\Big)(\mathbf{X}_\tau\big)\,\mathbbm{1}_{{E_+(B)}}\big(\mathbf{X}_\tau\big)\Big)\,d\tau.
\end{multline*}
\end{proof}

\begin{corollary}\label{coroskoro}
Let $j\in I$. We denote by $\pi_j:\mathbb{R}^n\to\mathbb{R}$, $x\mapsto x_j$, the projection on the $j$-th coordinate. Then under the assumptions of Theorem \ref{theorep1} the process $\mathbf{M}$ is characterized via its coordinate processes $\big(\mathbf{X}_t^j\big)_{t\ge 0}:=\big(\pi_j(\mathbf{X}_{t})\big)_{t\ge 0}$, $1\le j\le n$, by
\begin{multline}\label{rep}
\mathbf{X}_{t}^j-\mathbf{X}_0^j=\mathbbm{1}_{\mathring{E}}\big(\mathbf{X}_t\big)\,\sqrt{2}\,B^j_t+\int_{0}^{t}\partial_j\ln(\varrho)\big(\mathbf{X}_\tau\big)\mathbbm{1}_{\mathring{E}}\big(\mathbf{X}_\tau\big)\,d\tau\\
+\sum_{\varnothing\not=B\subsetneq I}\left\{\begin{array}{ll}
  \mathbbm{1}_{{E_+(B)}}\big(\mathbf{X}_t\big)\,\sqrt{2}\,B^j_t+\int_0^{t} \partial_j\ln(\varrho)\big(\mathbf{X}_\tau\big)\mathbbm{1}_{{E_+(B)}}\big(\mathbf{X}_\tau\big)\,d\tau, & \text{if }j\in B\\
  \frac{1}{s}\,\int_0^{t}\mathbbm{1}_{{E_+(B)}}\big(\mathbf{X}_\tau\big)\,d\tau, & \text{if }j\in I\setminus B
  \end{array}\right.\\
+\frac{1}{s}\int_0^{t}\mathbbm{1}_{\{(0,\ldots,0)\}}\big(\mathbf{X}_\tau\big)\,d\tau,
\end{multline}
where $(B^j_t)_{t\ge 0}$ is a one dimensional Brownian motion with $B^j_0=0$.
\end{corollary}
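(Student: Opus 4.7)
The strategy is to apply the Fukushima-type decomposition of Theorem~\ref{theorep1} to the coordinate projections $\pi_j$. Since $\pi_j \notin \mathcal{D} = C_c^2(E)$, I first introduce, for each $k \in \mathbb{N}$, a cutoff $\phi_k \in \mathcal{D}$ with $\phi_k(x) = x_j$ on $[0,k]^n$ and uniformly bounded first and second partial derivatives. Setting $\tau_k := \inf\{t \geq 0 \mid \mathbf{X}_t \notin [0,k]^n\}$, the conservativity of $\mathbf{M}$ from Theorem~\ref{theoprocess} together with continuity of sample paths forces $\tau_k \to \infty$ $\mathbf{P}_x^{n,s,\varrho}$-a.s.\ for $\mathcal{E}$-q.e.\ $x \in E$. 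On $[0,\tau_k]$ the processes $\phi_k(\mathbf{X}_t)$ and $\mathbf{X}_t^j$ coincide, and their decompositions agree there; passing to the limit $k \to \infty$ therefore produces a Fukushima decomposition for $\mathbf{X}_t^j - \mathbf{X}_0^j$.

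Using $\partial_i \pi_j = \delta_{ij}$ and $\partial_i^2 \pi_j = 0$, Theorem~\ref{theorep1} reduces to very explicit formulas. The bounded-variation part $\mathbf{N}^{[\pi_j]}$ splits, according to whether $j \in B$ or $j \in I \setminus B$, into a sum of drift contributions $\partial_j \ln(\varrho)(\mathbf{X}_\tau)\,\mathbbm{1}_{E_+(B)}(\mathbf{X}_\tau)$ for $B \ni j$ and pinning contributions $\tfrac{1}{s}\,\mathbbm{1}_{E_+(B)}(\mathbf{X}_\tau)$ for $B \not\ni j$. Isolating the extremal cases $B = I$ (yielding the $\mathring{E}$ term) and $B = \emptyset$ (yielding the $\{(0,\dots,0)\}$ term) recovers precisely the drift structure on the right-hand side of (\ref{rep}). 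In parallel, the quadratic variation of the MAF becomes
\[
\langle \mathbf{M}^{[\pi_j]}\rangle_t \;=\; 2\sum_{B \subset I,\; j\in B}\int_0^t \mathbbm{1}_{E_+(B)}(\mathbf{X}_\tau)\,d\tau \;=\; 2\int_0^t \mathbbm{1}_{\{x_j > 0\}}(\mathbf{X}_\tau)\,d\tau,
\]
while Proposition~\ref{propenergymeasure} applied to the combinations $\pi_j \pm \pi_k$ shows $\langle \mathbf{M}^{[\pi_j]}, \mathbf{M}^{[\pi_k]}\rangle \equiv 0$ for $j \neq k$, since $\partial_i \pi_j \cdot \partial_i \pi_k \equiv 0$.

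The remaining step is to identify $\mathbf{M}^{[\pi_j]}$ with a stochastic integral against a Brownian motion $B^j$. Since $\langle \mathbf{M}^{[\pi_j]}\rangle$ degenerates on the set $\{X^j = 0\}$, I enlarge the probability space by independent auxiliary standard Brownian motions $(W^j)_{j\in I}$ and define
\[
B^j_t \;:=\; \tfrac{1}{\sqrt{2}}\int_0^t \mathbbm{1}_{\{X_\tau^j > 0\}}\,d\mathbf{M}^{[\pi_j]}_\tau \;+\; \int_0^t \mathbbm{1}_{\{X_\tau^j = 0\}}\,dW^j_\tau.
\]
A direct computation gives $\langle B^j, B^k\rangle_t = \delta_{jk}\,t$, so by L\'evy's characterization the family $(B^j)_{j\in I}$ consists of independent standard Brownian motions. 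Because $\langle \mathbf{M}^{[\pi_j]}\rangle$ is carried by $\{X^j > 0\}$, the integral $\int_0^t \sqrt{2}\,\mathbbm{1}_{\{X_\tau^j > 0\}}\,dB^j_\tau$ reproduces $\mathbf{M}^{[\pi_j]}_t$, and the further splitting $\mathbbm{1}_{\{x_j > 0\}} = \sum_{B \ni j}\mathbbm{1}_{E_+(B)}$ yields exactly the noise terms on the right-hand side of (\ref{rep}). The main obstacle is precisely this Brownian representation: the degeneracy of $\langle \mathbf{M}^{[\pi_j]}\rangle$ on $\{X^j = 0\}$ forces the enlargement of the probability space and demands care in verifying joint independence of the $(B^j)_{j\in I}$, whereas the localization argument and the book-keeping reassembly of the drift and the quadratic variation into the form (\ref{rep}) are essentially routine once Theorem~\ref{theorep1} is in hand.
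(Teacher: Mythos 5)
Your argument is correct and follows the same skeleton as the paper's proof: cut off the coordinate projection to get an element of $\mathcal{D}$, localize with the stopping times $\tau_k=\inf\{t\ge 0\,|\,\mathbf{X}_t\notin[0,k]^n\}$ (which tend to infinity by conservativity and path continuity), apply Theorem \ref{theorep1}, and read off the drift bookkeeping from $\partial_i\pi_j=\delta_{ij}$, $\partial_i^2\pi_j=0$ together with the splitting $\mathbbm{1}_{\{x_j>0\}}=\sum_{B\ni j}\mathbbm{1}_{E_+(B)}$. Where you genuinely diverge is the identification of the martingale part. The paper notes that $\langle\mathbf{M}^{[\pi_j^k]}\rangle_{t\wedge\tau_k}=2(t\wedge\tau_k)$ ``as long as'' the process stays in the strata where the quadratic variation grows, and then invokes L\'evy's theorem together with Brownian scaling in this localized, somewhat informal sense; the cross-variation statement $\langle\mathbf{M}^{[\pi_j]},\mathbf{M}^{[\pi_k]}\rangle_t=2\delta_{jk}t$ is only recorded afterwards in Remark \ref{remBM}. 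You instead confront the degeneracy of $\langle\mathbf{M}^{[\pi_j]}\rangle_t=2\int_0^t\mathbbm{1}_{\{X^j_\tau>0\}}\,d\tau$ on $\{X^j=0\}$ head on, via the standard martingale representation device: enlarge the probability space by independent auxiliary Brownian motions, define $B^j$ as the indicated stochastic integral, check $\langle B^j,B^k\rangle_t=\delta_{jk}t$ (your polarization argument with $\pi_j\pm\pi_k$ and Proposition \ref{propenergymeasure} supplies the off-diagonal vanishing), and recover $\mathbf{M}^{[\pi_j]}_t=\int_0^t\sqrt{2}\,\mathbbm{1}_{\{X^j_\tau>0\}}\,dB^j_\tau$ because the quadratic variation is carried by $\{X^j>0\}$. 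This buys a cleaner and more rigorous treatment of the degenerate set, joint independence of the $(B^j)_{j\in I}$, and it makes precise what the indicator-times-$B^j_t$ terms in (\ref{rep}) actually mean (namely stochastic integrals, which is also the form (\ref{repintro2}) used in the introduction); the price is the enlargement of the probability space and filtration, where you should state explicitly that the $W^j$ are chosen independent of $\mathbf{F}_\infty$ so that the $\mathbf{M}^{[\pi_j]}$ remain martingales in the enlarged filtration. Apart from that routine caveat, and the equally routine consistency of the localized decompositions as $k$ varies (which the paper also passes over), your proof is complete.
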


\begin{proof}
We consider
\begin{align*}
\pi_j^k(x):=
\left\{\begin{array}{ll}
  x_j, & \text{if }x\in [0,k+1)^n\\
  0, & \text{if }x\in [k+2,\infty)^n
  \end{array}\right., \quad 1 \le j \le n, \, k \in\mathbb{N},\text{ such that }\pi_j^k\in\mathcal{D}.
\end{align*}
Furthermore, we define
\begin{align*}
\tau_k:=\inf\big\{t \ge 0 \,|\, \mathbf{X}_t \not\in [0,k]^n\big\}, \quad k \in {\mathbb N}. 
\end{align*}
$(\tau_k)_{k\in\mathbb{N}}$ is a sequence of stopping times with $\tau_k\uparrow\infty$ as $k\to\infty$. Now using the decomposition (\ref{equdecomp}) we obtain for $k\in\mathbb{N}$ and $j\in I$ the representation
\begin{multline*}
\mathbf{X}_{t\wedge\tau_k}^j-\mathbf{X}_0^j=\pi_j^k\big(\mathbf{X}_{t\wedge\tau_k}\big)-\pi_j^k\big(\mathbf{X}_0\big)=\mathbf{M}_{t\wedge\tau_k}^{[\pi_j^k]}+\mathbf{N}_{t\wedge\tau_k}^{[\pi_j^k]}\\
=\mathbf{M}_{t\wedge\tau_k}^{[\pi^k_j]}+\int_{0}^{t\wedge\tau_k}\partial_j\ln(\varrho)\big(\mathbf{X}_\tau\big)\mathbbm{1}_{\mathring{E}}\big(\mathbf{X}_\tau\big)\,d\tau\\
+\sum_{\varnothing\not=B\subsetneq I}\left\{\begin{array}{ll}
  \int_0^{t\wedge\tau_k} \partial_j\varrho\big(\mathbf{X}_\tau\big)\mathbbm{1}_{{E_+(B)}}\big(\mathbf{X}_\tau\big)\,d\tau, & \text{if }j\in B\\
  \frac{1}{s}\,\int_0^{t\wedge\tau_k}\mathbbm{1}_{{E_+(B)}}\big(\mathbf{X}_\tau\big)\,d\tau, & \text{if }j\in I\setminus B
  \end{array}\right.\\
+\frac{1}{s}\int_0^{t\wedge\tau_k}\mathbbm{1}_{\{(0,\ldots,0)\}}\big(\mathbf{X}_\tau\big)\,d\tau.
\end{multline*}
 Additionally we have that the quadratic variation
\begin{align*}
\left\langle \mathbf{M}^{[\pi^k_j]}\right\rangle_{t\wedge\tau_k}=2\,\big(t\wedge\tau_k\big)
\end{align*}
as long as $\mathbf{X}_{t\wedge\tau_k}$, $t\ge 0$, takes values in $E_+(B)$, $\varnothing\not=B\subset I$.
Hence in these situations for $k\in\mathbb{N}$ large enough $\mathbf{M}_{t\wedge\tau_k}^{[\pi^k_j]}=\mathbf{M}_{t}^{[\pi_j]}$ is a continuous, local martingale with quadratic variation $2t$. Thus by L\'{e}vy's theorem (see e.g.~\cite[Sect.~3.4,~Theo.~(4.1)]{Dur96}) together with the scaling property of Brownian motion we obtain
\begin{align*}
\mathbf{M}_{t}^{[\pi_j]}=B^j_t,\quad {t\ge 0},
\end{align*}
as long as $\mathbf{X}_{t}$, $t\ge 0$, takes values in $E_+(B)$, $\varnothing\not=B\subset I$, where $(B^j_t)_{t\ge 0}$ is a one dimensional Brownian motion with $B^j_0=0$. For $j\in I$ this yields the representation (\ref{rep}).
\end{proof}

\begin{remark}\label{remBM}
For $\varnothing\not=B\subsetneq I$ and $j,k\in B$ we obtain that $\left\langle \mathbf{M}^{[\pi_j]},\mathbf{M}^{[\pi_k]}\right\rangle_{t}=2\delta_{jk}t$, where $\delta_{jk}$ denotes the Kronecker delta. Therefore, on the boundary parts $E_{\scriptscriptstyle{+}}(B)$ of $\partial E$ the constructed process is a $(\#B)$-dimensional Brownian motion scaled by $\sqrt{2}$ when it spends time there. 
\end{remark}

\section{Application to the dynamical wetting model in $(d+1)$-dimension}

Let $d\in\mathbb{N}$ and $D_{\scriptscriptstyle{d}}:=(0,1]^d\subset\mathbb{R}^d$. For $N\in\mathbb{N}$ we define $D_{\scriptscriptstyle{d,N}}:=ND_{\scriptscriptstyle{d}}\cap\mathbb{Z}^d$, where $ND_{\scriptscriptstyle{d}}:=\big\{N\theta\,\big|\,\theta\in D_{\scriptscriptstyle{d}}\big\}$. Here $N$ stands for the scaling parameter. The discretized set $D_{d,\scriptscriptstyle{N}}$ is a \emph{microscopic} correspondence to the \emph{macroscopic} domain $D_{\scriptscriptstyle{d}}$ and given by $D_{\scriptscriptstyle{d,N}}=\big\{1,2,\ldots,N\big\}^d$. The boundary $\partial D_{\scriptscriptstyle{d,N}}$ of $D_{\scriptscriptstyle{d,N}}$ is defined by $\partial D_{\scriptscriptstyle{d,N}}:=\big\{x\not\in D_{\scriptscriptstyle{d,N}}\,\big|\,|x-y|_{\text{euc}}=1\mbox{ for some }y\in D_{\scriptscriptstyle{d,N}}\big\}$ and the closure $\overline{D_{d,\scriptscriptstyle{N}}}$ of $D_{\scriptscriptstyle{d,N}}$ is given by $\overline{D_{d,\scriptscriptstyle{N}}}:=D_{\scriptscriptstyle{d,N}}\cup\partial D_{d,\scriptscriptstyle{N}}$. Hence $\overline{D_{\scriptscriptstyle{d,N}}}=\big\{0,1,2,\ldots,N+1\big\}^d$. For fixed $N\in\mathbb{N}$ we consider the \emph{space of interfaces}
\begin{align*}
\overline{\Omega^+_{\scriptscriptstyle{d,N}}}:=[0,\infty)^{{D_{\scriptscriptstyle{d,N}}}}:=\Big\{\phi:{D_{\scriptscriptstyle{d,N}}}\to[0,\infty)\Big\}=\Big\{\phi:=(\phi_x)_{x\in {D_{\scriptscriptstyle{d,N}}}}\subset [0,\infty)^{N^d}\Big\}
\end{align*}
on ${D_{\scriptscriptstyle{d,N}}}$. Note that the variable $\phi_x$, $x\in {D_{\scriptscriptstyle{d,N}}}$, describes the height of an interface at position $x\in {D_{\scriptscriptstyle{d,N}}}$ measured with respect to the reference hyperplane ${D_{\scriptscriptstyle{d,N}}}$. Therefore, $\phi_x$, $x\in {D_{\scriptscriptstyle{d,N}}}$, is also called \emph{height variable}. We extend $\phi\in\overline{\Omega_{\scriptscriptstyle{d,N}}^+}$ to the boundary $\partial D_{\scriptscriptstyle{d,N}}$ by setting $\phi_x=0$ for all $x\in\partial D_{\scriptscriptstyle{d,N}}$. The restriction for the functions $\phi$ to take values in $[0,\infty)\subset\mathbb{R}$ reflects the fact that a hard wall is settled at height level $0$ of the interface. 

The potential energy of an interface $\phi\in\overline{\Omega_{\scriptscriptstyle{d,N}}^+}$ is given by a \emph{Hamiltonian} with \emph{zero boundary condition}, i.e.,
\begin{align}\label{equHam}
\overline{\Omega_{\scriptscriptstyle{d,N}}^+}\ni\phi\mapsto H^{\scriptscriptstyle{V}}_{\scriptscriptstyle{d,N}}(\phi):=\frac{1}{2}\sum_{\stackunder{|x-y|_{\text{euc}}=1}{x,y\in{{\overline{D_{\scriptscriptstyle{d,N}}}}}}}V\big(\phi_x-\phi_y\big)\in\mathbb{R},
\end{align}
where the pair interaction potential $V$ fulfills Condition \ref{condpotential} below.
\begin{condition}\label{condpotential}
$V:\mathbb{R}\to[-b,\infty)$, $b\in[0,\infty)$, is continuously differentiable and symmetric, i.e., $V(-r)=V(r)$ for all $r\in\mathbb{R}$. Moreover, the following integrability conditions are satisfied:
\begin{enumerate}
\item[(i)]
$\quad\kappa:=\int_{\mathbb{R}}\exp\big(-V(r)\big)\,dr<\infty$;
\item[(ii)]
$\mathbb{V}'(x,\cdot)\in L^2\big(\overline{\Omega_{\scriptscriptstyle{d,N}}^+};\mu_{n,s,\varrho}\big)$ for all $x\in {D_{\scriptscriptstyle{d,N}}}$, where
\begin{align*}
\overline{\Omega_{\scriptscriptstyle{d,N}}^+}\ni\phi\mapsto \mathbb{V}'(x,\phi):=\sum_{\stackunder{|x-y|_{\text{euc}}=1}{y\in{{\overline{D_{\scriptscriptstyle{d,N}}}}}}}V'\big(\phi_x-\phi_y\big)\in\mathbb{R}.
\end{align*}
\end{enumerate}
\end{condition}

\begin{remark}\label{remcondapp}
Condition \ref{condpotential} guarantees that $V(0)\in[-b,\infty)$, hence flat interfaces are natural elements in the space of interfaces $\overline{\Omega_{\scriptscriptstyle{d,N}}^+}$, i.e., occur with positive probability, see (\ref{repmeasure}) below. Furthermore, Condition \ref{condpotential} implies Conditions \ref{condHamza} and \ref{condweakdiff} (see Remark \ref{remcondequi}).
\end{remark}
A natural distribution on the space of interfaces $\Big(\overline{\Omega_{\scriptscriptstyle{d,N}}^+},\mathcal{B}\big(\overline{\Omega_{\scriptscriptstyle{d,N}}^+}\big)\Big)$ is given by the probability measure $\mu_{\scriptscriptstyle{d,N}}^{\scriptscriptstyle{V,s}}$ defined by
\begin{align}\label{repmeasure}
d\mu_{\scriptscriptstyle{d,N}}^{\scriptscriptstyle{V,s}}(\phi)=\frac{1}{Z_{\scriptscriptstyle{d,N}}^{\scriptscriptstyle{V,s}}}\exp\Big(-H^{\scriptscriptstyle{V}}_{\scriptscriptstyle{d,N}}(\phi)\Big)\,\prod_{x\in {D_{\scriptscriptstyle{d,N}}}}\Big(s\,d\delta_0^x+d\phi_+^x\Big),\quad\phi\in\overline{\Omega_{\scriptscriptstyle{d,N}}^+},
\end{align}
with pair interaction potential $V$ under Condition \ref{condpotential} and normalizing constant $Z_{\scriptscriptstyle{d,N}}^{\scriptscriptstyle{V,s}}$. $\mu_{\scriptscriptstyle{d,N}}^{\scriptscriptstyle{V,s}}$ is a \emph{finite volume Gibbs measure} conditioned on $[0,\infty)^{{D_{\scriptscriptstyle{d,N}}}}$.
The corresponding space of square integrable functions we denote by $L^2\Big(\overline{\Omega_{\scriptscriptstyle{d,N}}^+};\mu_{\scriptscriptstyle{d,N}}^{\scriptscriptstyle{V,s}}\Big)$. Next we define the probability density
\begin{align*}
\varrho(\phi):=\varrho^{\scriptscriptstyle{V,s}}_{\scriptscriptstyle{d,N}}(\phi):=\frac{1}{Z_{\scriptscriptstyle{d,N}}^{\scriptscriptstyle{V,s}}}\exp\Big(-H^{\scriptscriptstyle{V}}_{\scriptscriptstyle{d,N}}(\phi)\Big),\quad\phi\in\overline{\Omega_{\scriptscriptstyle{d,N}}^+}.
\end{align*}
Hence we can rewrite (\ref{repmeasure}) as
\begin{multline*}
d\mu_{\scriptscriptstyle{N^d,s,\varrho}}:=d\mu_{\scriptscriptstyle{d,N}}^{\scriptscriptstyle{V,s}}=\varrho\,\prod_{x\in {D_{\scriptscriptstyle{d,N}}}}\Big(s\,d\delta_0^x+d\phi^x_+\Big)\\
=\varrho\,\sum_{B\subset D_{\scriptscriptstyle{d,N}}}s^{N^d-\#B}\left(\prod_{x\in B}d\phi^x_+\prod_{y\in D_{\scriptscriptstyle{d,N}}\setminus B}d\delta^y_{0}\right)
=\varrho\,\sum_{B\subset D_{\scriptscriptstyle{d,N}}}d\lambda^{N^d,s}_{\scriptscriptstyle{B}}
=\varrho\,dm_{N^d,s},\quad\phi\in\overline{\Omega_{\scriptscriptstyle{d,N}}^+}.
\end{multline*}

For each $\phi\in\overline{\Omega_{\scriptscriptstyle{d,N}}^+}$ we denote by
\begin{align*}
D^{\scriptscriptstyle{\text{dry}}}_{\scriptscriptstyle{d,N}}(\phi):=\big\{x\in D_{\scriptscriptstyle{d,N}}\,\big|\phi_x=0\big\}\quad\mbox{and}\quad D^{\scriptscriptstyle{\text{wet}}}_{\scriptscriptstyle{d,N}}(\phi):=\big\{x\in D_{\scriptscriptstyle{d,N}}\,\big|\phi_x>0\big\},
\end{align*}
\emph{dry regions} and \emph{wet regions} associated with the interface $\phi$, respectively, and define for $A,B\subset D_{\scriptscriptstyle{d,N}}$,
\begin{align*}
\Omega_{\scriptscriptstyle{d,N,A}}^{+\scriptscriptstyle{\text{,dry}}}:=\Big\{\phi\in\overline{\Omega_{\scriptscriptstyle{d,N}}^+}\,\Big|\,D^{\scriptscriptstyle{\text{dry}}}_{\scriptscriptstyle{d,N}}(\phi)=A\Big\}\quad\mbox{and}\quad\Omega_{\scriptscriptstyle{d,N,B}}^{+\scriptscriptstyle{\text{,wet}}}:=\Big\{\phi\in\overline{\Omega_{\scriptscriptstyle{d,N}}^+}\,\Big|\,D^{\scriptscriptstyle{\text{wet}}}_{\scriptscriptstyle{d,N}}(\phi)=B\Big\},
\end{align*}
respectively.
\begin{remark}
The following decomposition of the state space is valid:
\begin{align*}
\overline{\Omega_{\scriptscriptstyle{d,N}}^+}=\dot{\bigcup}_{A\subset D_{\scriptscriptstyle{d,N}}}\Omega_{\scriptscriptstyle{d,N,A}}^{+\scriptscriptstyle{\text{,dry}}}=\dot{\bigcup}_{B\subset D_{\scriptscriptstyle{d,N}}}\Omega_{\scriptscriptstyle{d,N,B}}^{+\scriptscriptstyle{\text{,wet}}}.
\end{align*}
\end{remark}
Therefore, $\mu_{\scriptscriptstyle{N^d,s,\varrho}}=\sum_{B\subset D_{\scriptscriptstyle{d,N}}}\mu^{\scriptscriptstyle{N^d,s,\varrho}}_{\scriptscriptstyle{B}}$ with $\mu^{\scriptscriptstyle{N^d,s,\varrho}}_{\scriptscriptstyle{B}}:=\mu_{\scriptscriptstyle{N^d,s,\varrho}}\Big|_{\Omega_{\scriptscriptstyle{d,N,B}}^{+\scriptscriptstyle{\text{,wet}}}}$.

\begin{theorem}\label{theosumdiriapp}
Let $d,N\in\mathbb{N}$. For $s\in(0,\infty)$ we have that under Condition \ref{condpotential} 
\begin{align}\label{formapp}
\mathcal{E}^{N^d,s,\varrho}\big(F,G\big):=\sum_{\varnothing\not=B\subset D_{\scriptscriptstyle{d,N}}}\mathcal{E}^{N^d,s,\varrho}_{\scriptscriptstyle{B}}\big(F,G\big),\quad F,G\in \mathcal{D}=C_c^2\big(\overline{\Omega_{\scriptscriptstyle{d,N}}^+}\big)
\end{align}
with 
\begin{align*}
\mathcal{E}^{N^d,s,\varrho}_{\scriptscriptstyle{B}}\big(F,G\big):=\sum_{x\in B}\int_{\Omega_{\scriptscriptstyle{d,N,B}}^{+\scriptscriptstyle{\text{,wet}}}}\partial_xF\,\partial_x G\,d\mu^{\scriptscriptstyle{N^d,s,\varrho}}_{\scriptscriptstyle{B}},\quad\varnothing\not=B\subset D_{\scriptscriptstyle{d,N}},
\end{align*}
is a densely defined, positive definite, symmetric bilinear form, which is closable on\\ $L^2\big(\overline{\Omega_{\scriptscriptstyle{d,N}}^+};\mu_{\scriptscriptstyle{N^d,s,\varrho}}\big)$. Its closure $\big(\mathcal{E}^{N^d,s,\varrho},D(\mathcal{E}^{N^d,s,\varrho})\big)$ is a conservative, strongly local, regular, symmetric Dirichlet form on $L^2\big(\overline{\Omega_{\scriptscriptstyle{d,N}}^+};\mu_{\scriptscriptstyle{N^d,s,\varrho}}\big)$. 
\end{theorem}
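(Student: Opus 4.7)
The strategy is to recognize Theorem \ref{theosumdiriapp} as a direct specialization of Theorem \ref{theosumdiri} to the combinatorial setting of the wetting model. The plan is to relabel the lattice sites, verify Conditions \ref{conddensity} and \ref{condHamza} for the Gibbs density $\varrho = \varrho_{d,N}^{V,s}$, and then invoke Theorem \ref{theosumdiri} verbatim.

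First I would fix a bijection $\iota \colon D_{d,N} \to I_{N^d} = \{1,\ldots,N^d\}$ and use it to identify $\overline{\Omega_{d,N}^+} = [0,\infty)^{D_{d,N}}$ with $E_{N^d} = [0,\infty)^{N^d}$. Under this identification, the decomposition $\overline{\Omega_{d,N}^+} = \dot{\bigcup}_{B \subset D_{d,N}} \Omega_{d,N,B}^{+,\text{wet}}$ corresponds exactly to $E = \dot{\bigcup}_{B \subset I} E_{+}(B)$, the measure $\prod_x(s\,d\delta_0^x + d\phi_+^x)$ coincides with $m_{N^d,s}$ defined in (\ref{defmeasure}), and the form (\ref{formapp}) becomes the bilinear form (\ref{form1}) with $n=N^d$ and density $\varrho = \varrho_{d,N}^{V,s}$. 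Consequently the claim reduces to checking that $\varrho$ satisfies Conditions \ref{conddensity} and \ref{condHamza}.

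Next I would verify Condition \ref{conddensity}. Positivity is immediate: $\varrho(\phi) = (Z_{d,N}^{V,s})^{-1}\exp(-H^V_{d,N}(\phi)) > 0$ for every $\phi \in \overline{\Omega_{d,N}^+}$, since $V$ is real-valued. For $L^1$-integrability, we need $Z_{d,N}^{V,s} < \infty$; this is the only computational step of substance. I would argue by induction on $|B|$ in the decomposition $m_{N^d,s} = \sum_{B \subset D_{d,N}} s^{N^d - |B|}\lambda_B^{N^d,s}$, integrating out the variables $\phi_x$, $x \in B$, one at a time along a spanning path in $\overline{D_{d,N}}$ that starts at a boundary vertex (where $\phi \equiv 0$). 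At each step, fixing all other heights and using the lower bound $V \geq -b$ on the already-integrated edges, one reduces to an integral of the form $\int_{[0,\infty)} e^{-V(\phi_x - c)}\,d\phi_x \leq \kappa < \infty$ thanks to Condition \ref{condpotential}(i). Summing the finitely many terms over $B$ yields $Z_{d,N}^{V,s} < \infty$.

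For Condition \ref{condHamza}, the key observation is that $V \in C^1(\mathbb{R})$ forces $H^V_{d,N}$ and hence $\varrho$ to be continuous and strictly positive on the compact sets $\overline{E_+(B)} \cap K$ for every compact $K$. Therefore $\varrho^{-1}$ is locally bounded on each $\overline{E_+(B)}$, which means $R_\varrho(\overline{E_+(B)}) = \overline{E_+(B)}$ and the Hamza condition becomes vacuous. With Conditions \ref{conddensity} and \ref{condHamza} in hand, the conclusion follows by a word-for-word application of Theorem \ref{theosumdiri} (invoking Propositions \ref{propdense}, \ref{propclos}, \ref{propDirichlet}, \ref{propstronglocal}). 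The main obstacle is the bookkeeping step in establishing $Z_{d,N}^{V,s} < \infty$, since Condition \ref{condpotential} gives only one-dimensional integrability of $e^{-V}$; everything else is essentially a translation of notation.
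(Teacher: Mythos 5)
Your proposal is correct and takes essentially the same route as the paper: the paper's proof consists of invoking Remark \ref{remcondapp} (Condition \ref{condpotential} implies the general hypotheses) and then applying Theorem \ref{theosumdiri} under the identification of $\overline{\Omega_{\scriptscriptstyle{d,N}}^+}$ with $E_{N^d}$. You merely make explicit two points the paper leaves implicit, namely the verification of Condition \ref{conddensity} (finiteness of $Z_{\scriptscriptstyle{d,N}}^{\scriptscriptstyle{V,s}}$ by integrating the height variables along a spanning path, bounding the remaining potentials by $V\ge -b$ and using $\kappa<\infty$) and the fact that Condition \ref{condHamza} holds vacuously because $\varrho$ is continuous and strictly positive, so $R_\varrho\big(\overline{E_{\scriptscriptstyle{+}}(B)}\big)=\overline{E_{\scriptscriptstyle{+}}(B)}$.
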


\begin{remark}
Note that for functions in $\mathcal{D}$, $l\in\{1,2\}$ and $x\in D_{\scriptscriptstyle{d,N}}$ we denote by $\partial_x^l$ the partial derivative of order $l$ with respect to the variable $\phi_x$. In particular, $\partial_x:=\partial_x^1$.
\end{remark}

\begin{proof}
Use Remark \ref{remcondapp} and apply Theorem \ref{theosumdiri}.
\end{proof}

\begin{proposition}\label{propgenapp}
Suppose that 
Condition \ref{condpotential} is satisfied. There exists a unique, positive, self-adjoint, linear operator $\big(\mathcal{H}^{N^d,s,\varrho},D(\mathcal{H}^{N^d,s,\varrho})\big)$ on $L^2\big(\overline{\Omega_{\scriptscriptstyle{d,N}}^+};\mu_{N^d,s,\varrho}\big)$ such that
\begin{multline*}
D(\mathcal{H}^{N^d,s,\varrho})\subset D(\mathcal{E}^{N^d,s,\varrho})\quad\text{and}\quad\mathcal{E}^{N^d,s,\varrho}\big(F,G\big)=\Big(\mathcal{H}^{N^d,s,\varrho} F,G\Big)_{L^2(\overline{\Omega_{\scriptscriptstyle{d,N}}^+};\mu_{N^d,s,\varrho})}\\
\quad\text{for all }F\in D(\mathcal{H}^{N^d,s,\varrho}),~G\in D(\mathcal{E}^{N^d,s,\varrho}).
\end{multline*}
\end{proposition}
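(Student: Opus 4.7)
The plan is to reduce this statement directly to the general framework developed in Section~\ref{sectfuana}, specifically to Proposition~\ref{propgen}. The key observation is that Proposition~\ref{propgenapp} is nothing other than the specialization of Proposition~\ref{propgen} to the concrete setting of the wetting model with state space $\overline{\Omega_{\scriptscriptstyle{d,N}}^+} = [0,\infty)^{D_{\scriptscriptstyle{d,N}}}$ (which is of the form $[0,\infty)^n$ with $n = N^d$) and density $\varrho = \varrho^{\scriptscriptstyle{V,s}}_{\scriptscriptstyle{d,N}}$.

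First, I would verify that the hypotheses of Proposition~\ref{propgen} are met in this concrete setting. By Remark~\ref{remcondapp}, Condition~\ref{condpotential} implies both Condition~\ref{conddensity} and Condition~\ref{condHamza} (and, incidentally, Condition~\ref{condweakdiff}, although the latter is not needed for the mere existence of the generator). Next, Theorem~\ref{theosumdiriapp} provides the closed, symmetric, densely defined, positive definite bilinear form $\big(\mathcal{E}^{N^d,s,\varrho},D(\mathcal{E}^{N^d,s,\varrho})\big)$ on $L^2\big(\overline{\Omega_{\scriptscriptstyle{d,N}}^+};\mu_{\scriptscriptstyle{N^d,s,\varrho}}\big)$.

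Having established the assumptions, I would apply Proposition~\ref{propgen} (whose proof is itself an invocation of \cite[Coro.~1.3.1]{FOT94}, i.e., the Friedrichs representation theorem) to obtain the unique positive self-adjoint operator $\big(\mathcal{H}^{N^d,s,\varrho},D(\mathcal{H}^{N^d,s,\varrho})\big)$ on $L^2\big(\overline{\Omega_{\scriptscriptstyle{d,N}}^+};\mu_{\scriptscriptstyle{N^d,s,\varrho}}\big)$ such that $D(\mathcal{H}^{N^d,s,\varrho})\subset D(\mathcal{E}^{N^d,s,\varrho})$ and
\begin{align*}
\mathcal{E}^{N^d,s,\varrho}(F,G)=\big(\mathcal{H}^{N^d,s,\varrho} F,G\big)_{L^2(\overline{\Omega_{\scriptscriptstyle{d,N}}^+};\mu_{N^d,s,\varrho})}\quad \text{for all } F\in D(\mathcal{H}^{N^d,s,\varrho}),\ G\in D(\mathcal{E}^{N^d,s,\varrho}).
\end{align*}
There is no real obstacle here: the statement is a direct translation of an abstract result already proved, and the only nontrivial content sits in Theorem~\ref{theosumdiriapp} and the verification (already carried out in Remark~\ref{remcondapp}) that the probabilistic density $\varrho^{\scriptscriptstyle{V,s}}_{\scriptscriptstyle{d,N}}$ arising from the Gibbs measure satisfies the abstract integrability and Hamza-type hypotheses.
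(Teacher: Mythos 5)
Your proposal is correct and is exactly the paper's argument: the paper's proof reads ``Use Remark \ref{remcondapp} and apply Proposition \ref{propgen}.'' Your additional verification that the closed form from Theorem \ref{theosumdiriapp} and the integrability of the Gibbs density supply the hypotheses is precisely the content implicit in that one-line proof.
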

\begin{proof}
Use Remark \ref{remcondapp} and apply Proposition \ref{propgen}.
\end{proof}

For $F\in\mathcal{D}$ we define
\begin{align*}
\mathcal{L}^{N^d,s,\varrho}{F}:=\sum_{x\in D_{\scriptscriptstyle{d,N}}}\partial^2_x F+\sum_{x\in D_{\scriptscriptstyle{d,N}}}\partial_x F\,\partial_x (\ln\varrho)+\frac{1}{s}\sum_{x\in D_{\scriptscriptstyle{d,N}}}\partial_x F
\end{align*}
and
\begin{align*}
\mathcal{L}^{N^d,{\scriptscriptstyle{\infty}},\varrho} F:=\sum_{x\in D_{\scriptscriptstyle{d,N}}}\partial^2_x F+\sum_{x\in D_{\scriptscriptstyle{d,N}}}\partial_x F\,\partial_x (\ln\varrho).
\end{align*}
\begin{proposition}\label{propibpapp}
Suppose Condition \ref{condpotential} is satisfied. For functions 
\begin{align*}
F,G\in \mathcal{D}_{\scriptscriptstyle{\text{Wentzell}}}:=\Big\{H\in\mathcal{D}\,\Big|\,\big(\mathcal{L}^{N^d,s,\varrho}H\big)\big|_{\Omega_{\scriptscriptstyle{d,N,B}}^{+\scriptscriptstyle{\text{,wet}}}}=0\text{ for all }B\subsetneq D_{\scriptscriptstyle{d,N}}\Big\}
\end{align*}
we have the representation $\mathcal{E}^{N^d,s,\varrho}\big(F,G\big)=\Big(-\mathcal{L}^{N^d,{\scriptscriptstyle{\infty}},\varrho} F,G\Big)_{L^2(\overline{\Omega_{\scriptscriptstyle{d,N}}^+};\mu_{N^d,s,\varrho})}$.
\end{proposition}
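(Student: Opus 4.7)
The plan is to recognize Proposition~\ref{propibpapp} as the direct specialization of Proposition~\ref{propibp} to the finite volume wetting setting, so that the argument reduces to identifying the data and checking that the general framework applies.

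First, I fix any bijective enumeration of the $N^d$ sites of $D_{d,N}$ and set $n := N^d$. Under this identification the state space $\overline{\Omega_{d,N}^+}$ becomes $E = [0,\infty)^n$, each wet stratum $\Omega_{d,N,B}^{+,\text{wet}}$ corresponds to $E_+(B)$, the reference measure $\mu_{N^d,s,\varrho}$ agrees with $\mu_{n,s,\varrho}$, and the bilinear form in (\ref{formapp}) coincides with the form (\ref{form1}). Moreover, the operators $\mathcal{L}^{N^d,s,\varrho}$ and $\mathcal{L}^{N^d,\infty,\varrho}$ defined directly above the statement are exactly the operators $L^s$ and $L$ from Section~\ref{sectfuana}, so the subspace $\mathcal{D}_{\text{Wentzell}}$ appearing in Proposition~\ref{propibpapp} matches the one in Proposition~\ref{propibp}.

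Second, I verify the three hypotheses of Proposition~\ref{propibp}. The density $\varrho = (Z^{V,s}_{d,N})^{-1}\exp(-H^V_{d,N})$ is strictly positive everywhere on $\overline{\Omega_{d,N}^+}$, since $H^V_{d,N}$ is real-valued; its integrability against $m_{N^d,s}$ reduces to finiteness of the normalizing constant $Z^{V,s}_{d,N}$, which follows from Condition~\ref{condpotential}(i) and the lower bound $V \ge -b$ by a standard product estimate in which one integrates out the height variables $\phi_x$ site by site, each one-dimensional integral being controlled by $\kappa$ up to a bounded factor depending only on $b$, $d$, $N$. Thus Condition~\ref{conddensity} holds, and by Remark~\ref{remcondapp} Conditions~\ref{condHamza} and \ref{condweakdiff} are then automatic.

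With these identifications in place, Proposition~\ref{propibp} immediately yields the desired representation $\mathcal{E}^{N^d,s,\varrho}(F,G) = (-\mathcal{L}^{N^d,\infty,\varrho}F,G)_{L^2(\overline{\Omega_{d,N}^+};\mu_{N^d,s,\varrho})}$ for all $F,G \in \mathcal{D}_{\text{Wentzell}}$. No genuine obstacle arises; the only point requiring care is the purely notational translation between the abstract multi-index set $I = \{1,\ldots,N^d\}$ in Section~\ref{sectfuana} and the lattice $D_{d,N}$ that indexes the height variables here.
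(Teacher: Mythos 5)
Your proposal is correct and follows exactly the paper's route: the paper's proof is precisely ``Use Remark \ref{remcondapp} and apply Proposition \ref{propibp},'' i.e.\ identify $\overline{\Omega_{\scriptscriptstyle{d,N}}^+}$ with $E=[0,\infty)^{N^d}$, the strata $\Omega_{\scriptscriptstyle{d,N,B}}^{+\scriptscriptstyle{\text{,wet}}}$ with $E_+(B)$, and $\mathcal{L}^{N^d,s,\varrho}$, $\mathcal{L}^{N^d,\infty,\varrho}$ with $L^s$, $L$, and then invoke the general integration-by-parts result under Conditions \ref{conddensity}--\ref{condweakdiff}. Your extra check that $Z^{V,s}_{d,N}<\infty$ (hence Condition \ref{conddensity}) via Condition \ref{condpotential}(i) and $V\ge -b$ is a harmless, indeed welcome, elaboration of what the paper leaves implicit.
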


\begin{proof}
Use Remark \ref{remcondapp} and apply Proposition \ref{propibp}.
\end{proof}

\begin{remark}
Elements from $\mathcal{D}_{\scriptscriptstyle{\text{Wentzell}}}$ are said to fulfill a \emph{Wentzell type boundary condition}.
\end{remark}

\begin{theorem}\label{theoprocessapp}
Suppose that Condition \ref{condpotential} is satisfied. Then there exists a conservative diffusion process (i.e.~a strong Markov process with continuous sample paths and infinite life time)
\begin{align*}
\mathbf{M}^{N^d,s,\varrho}=\left(\mathbf{\Omega},\mathbf{F},(\mathbf{F}_t)_{t\ge 0},(\mathbf{X}_t)_{t\ge 0},(\mathbf{\Theta}_t)_{t\ge 0},(\mathbf{P}^{N^d,s,\varrho}_{\phi})_{\phi\in \overline{\Omega_{\scriptscriptstyle{d,N}}^+}}\right)
\end{align*}
with state space $\overline{\Omega_{\scriptscriptstyle{d,N}}^+}$ which is properly associated with $\big(\mathcal{E}^{N^d,s,\varrho},D(\mathcal{E}^{N^d,s,\varrho})\big)$.
$\mathbf{M}^{N^d,s,\varrho}$ is up to $\mu_{N^d,s,\varrho}$-equivalence unique. In particular, $\mathbf{M}^{N^d,s,\varrho}$ is $\mu_{N^d,s,\varrho}$-symmetric and has $\mu_{N^d,s,\varrho}$ as invariant measure.
\end{theorem}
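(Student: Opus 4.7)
The plan is to reduce the statement directly to Theorem \ref{theoprocess}, which already provides everything claimed once we verify that the hypotheses transfer from the wetting setting to the abstract finite-volume setting of Section \ref{sectfuana}. First I would identify the state space by noting that $D_{\scriptscriptstyle{d,N}}=\{1,\ldots,N\}^d$ has cardinality $N^d$, so after choosing an enumeration of $D_{\scriptscriptstyle{d,N}}$ we may identify $\overline{\Omega_{\scriptscriptstyle{d,N}}^+}=[0,\infty)^{D_{\scriptscriptstyle{d,N}}}$ with $E=[0,\infty)^{N^d}$, and the decomposition $\mu_{\scriptscriptstyle{N^d,s,\varrho}}=\varrho\,m_{N^d,s}$ already exhibited above in the discussion following (\ref{repmeasure}) matches the abstract reference measure from (\ref{defmeasure}).

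Next I would verify that Condition \ref{condpotential} implies Conditions \ref{conddensity} and \ref{condHamza} for the density $\varrho=\varrho^{\scriptscriptstyle{V,s}}_{\scriptscriptstyle{d,N}}=\frac{1}{Z_{\scriptscriptstyle{d,N}}^{\scriptscriptstyle{V,s}}}\exp(-H^{\scriptscriptstyle{V}}_{\scriptscriptstyle{d,N}})$; this is precisely the content of Remark \ref{remcondapp}. Integrability is clear from Condition \ref{condpotential}(i) because $V$ is bounded below, and positivity with continuity ensures that $\varrho$ is strictly positive and smooth enough so that the Hamza-type set $R_\varrho(\overline{E_{\scriptscriptstyle{+}}(B)})$ equals the full closure, making Condition \ref{condHamza} vacuous. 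Hence Theorem \ref{theosumdiriapp} yields the regular, strongly local, conservative, symmetric Dirichlet form $\big(\mathcal{E}^{N^d,s,\varrho},D(\mathcal{E}^{N^d,s,\varrho})\big)$ on $L^2\big(\overline{\Omega_{\scriptscriptstyle{d,N}}^+};\mu_{\scriptscriptstyle{N^d,s,\varrho}}\big)$.

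With these verifications in hand, I would simply invoke Theorem \ref{theoprocess} applied to $n=N^d$, $s\in(0,\infty)$ and $\varrho=\varrho^{\scriptscriptstyle{V,s}}_{\scriptscriptstyle{d,N}}$. This produces the associated conservative diffusion process $\mathbf{M}^{N^d,s,\varrho}$ with the required properties: proper association, $\mu_{\scriptscriptstyle{N^d,s,\varrho}}$-symmetry, invariance of $\mu_{\scriptscriptstyle{N^d,s,\varrho}}$, and uniqueness up to $\mu_{\scriptscriptstyle{N^d,s,\varrho}}$-equivalence.

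There is essentially no obstacle, since all the heavy lifting (closability, regularity, strong locality, conservativity, and the general existence result of \cite[Chap.~V,~Theo.~1.11]{MR92}) has already been carried out in Sections \ref{sectfuana} and \ref{sectprocess}. The only point that deserves a line is the dictionary between the abstract index set $I=\{1,\ldots,N^d\}$ and the microscopic lattice $D_{\scriptscriptstyle{d,N}}$; the decompositions $E_{\scriptscriptstyle{+}}(B)$ used in the abstract construction correspond under this identification to the wet-region strata $\Omega_{\scriptscriptstyle{d,N,B}}^{+\scriptscriptstyle{\text{,wet}}}$, so the form (\ref{formapp}) is literally the specialization of (\ref{form1}) to this situation and Theorem \ref{theoprocess} applies verbatim.
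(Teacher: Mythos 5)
Your proposal is correct and follows essentially the same route as the paper, whose proof of Theorem \ref{theoprocessapp} consists precisely of invoking Remark \ref{remcondapp} (Condition \ref{condpotential} guarantees the hypotheses of the abstract setting, with $n=N^d$ and $\varrho=\varrho^{\scriptscriptstyle{V,s}}_{\scriptscriptstyle{d,N}}$) and then applying Theorem \ref{theoprocess}. Your added details — the identification of $\overline{\Omega_{\scriptscriptstyle{d,N}}^+}$ with $[0,\infty)^{N^d}$, the strict positivity and continuity of $\varrho$ making the Hamza condition trivially satisfied, and integrability via Condition \ref{condpotential}(i) — simply spell out what the paper leaves to Remark \ref{remcondapp} and the construction of $\mu_{\scriptscriptstyle{N^d,s,\varrho}}$.
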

In the above theorem $\mathbf{M}^{N^d,s,\varrho}$ is canonical, i.e., $\mathbf{\Omega}=C^0\big([0,\infty),\overline{\Omega_{\scriptscriptstyle{d,N}}^+}\big)$, the space of continuous functions on $[0,\infty)$ into $\overline{\Omega_{\scriptscriptstyle{d,N}}^+}$, $\mathbf{X}_t(\omega)=\omega(t)$, $\omega\in\mathbf{\Omega}$. The filtration $(\mathbf{F}_t)_{t\ge 0}$ is the natural minimum completed admissible filtration obtained from the $\sigma$-algebras $\mathbf{F}^0_t:=\sigma\Big\{\mathbf{X}_{\tau}\,\Big|\,0\le \tau\le t\Big\}$, $t\ge 0$, and $\mathbf{F}:=\mathbf{F}_{\scriptscriptstyle{\infty}}:=\bigvee_{t\in[0,\infty)}\mathbf{F}_t$.

\begin{proof}
Use Remark \ref{remcondapp} and apply Theorem \ref{theoprocess}.
\end{proof}

\begin{theorem}\label{theomartingaleapp}
The diffusion process $\mathbf{M}^{N^d,s,\varrho}$ from Theorem \ref{theoprocessapp} is up to $\mu_{N^d,s,\varrho}$-equivalence the unique diffusion process having $\mu_{N^d,s,\varrho}$ as symmetrizing measure and solving the martingale problem for $\big(\mathcal{H}^{N^d,{s},\varrho},D(\mathcal{H}^{N^d,s,\varrho})\big)$, i.e., for all $G\in D(\mathcal{H}^{N^d,s,\varrho})$
\begin{align*}
\widetilde{G}(\mathbf{X}_t)-\widetilde{G}(\mathbf{X}_0)+\int_0^t \Big(\mathcal{H}^{N^d,{s},\varrho}G\Big)(\mathbf{X}_{\tau})\,d\tau,\quad t\ge 0,
\end{align*}
is an $\mathbf{F}_t$-martingale under $\mathbf{P}^{N^d,s,\varrho}_\phi$ (hence starting in $\phi$) for $\mathcal{E}^{N^d,s,\varrho}$-quasi all $\phi\in \overline{\Omega_{\scriptscriptstyle{d,N}}^+}$.
\end{theorem}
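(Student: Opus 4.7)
The plan is to deduce Theorem \ref{theomartingaleapp} by specializing the abstract martingale characterization Theorem \ref{theomartingale} to the wetting model, using the translation of parameters already set up in the section. First I would fix the identification $n := N^d$, so that the index set $I = \{1,\ldots,N^d\}$ may be canonically identified with $D_{\scriptscriptstyle{d,N}}$, giving $E = [0,\infty)^{N^d} = \overline{\Omega_{\scriptscriptstyle{d,N}}^+}$, $m_{n,s} = m_{N^d,s}$, and $\mu_{n,s,\varrho} = \mu_{N^d,s,\varrho} = \mu_{\scriptscriptstyle{d,N}}^{\scriptscriptstyle{V,s}}$ with density $\varrho = \varrho^{\scriptscriptstyle{V,s}}_{\scriptscriptstyle{d,N}} = \frac{1}{Z_{\scriptscriptstyle{d,N}}^{\scriptscriptstyle{V,s}}}\exp(-H^{\scriptscriptstyle{V}}_{\scriptscriptstyle{d,N}})$ as in the construction preceding the theorem. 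Under this identification, $(\mathcal{E}^{N^d,s,\varrho},D(\mathcal{E}^{N^d,s,\varrho}))$ from Theorem \ref{theosumdiriapp} coincides with $(\mathcal{E},D(\mathcal{E}))$ from Theorem \ref{theosumdiri}, and $(\mathcal{H}^{N^d,s,\varrho},D(\mathcal{H}^{N^d,s,\varrho}))$ from Proposition \ref{propgenapp} coincides with $(H,D(H))$ from Proposition \ref{propgen}. Similarly, the process $\mathbf{M}^{N^d,s,\varrho}$ of Theorem \ref{theoprocessapp} is exactly the instance of $\mathbf{M}$ from Theorem \ref{theoprocess}.

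Second, I would invoke Remark \ref{remcondapp}, which asserts that Condition \ref{condpotential} on the pair interaction potential $V$ implies Conditions \ref{condHamza} and \ref{condweakdiff}. Indeed, the continuous differentiability and symmetry of $V$ together with Condition \ref{condpotential}(i) ensure that $\varrho$ is strictly positive, continuously differentiable on $\overline{\Omega_{\scriptscriptstyle{d,N}}^+}$ and integrable with respect to $m_{N^d,s}$, so Conditions \ref{conddensity} and \ref{condweakdiff}(ii) hold. Since $\varrho>0$ everywhere, the set $R_\varrho$ coincides with $\overline{E_+(B)}$ for every $\varnothing\neq B\subset I$, so Condition \ref{condHamza} is automatic. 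Finally, $\partial_x\ln\varrho(\phi) = -\mathbb{V}'(x,\phi)$ for $x\in D_{\scriptscriptstyle{d,N}}$, and Condition \ref{condpotential}(ii) supplies the $L^2$-integrability required by Condition \ref{condweakdiff}(i) via the equivalence in Remark \ref{remcondequi}(ii).

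Having verified that all hypotheses of the abstract theory are met, the last step is a direct quotation: Theorem \ref{theomartingale} yields that, for every $G\in D(\mathcal{H}^{N^d,s,\varrho})$, the process
\begin{align*}
\widetilde{G}(\mathbf{X}_t) - \widetilde{G}(\mathbf{X}_0) + \int_0^t \big(\mathcal{H}^{N^d,s,\varrho}G\big)(\mathbf{X}_\tau)\,d\tau, \quad t \ge 0,
\end{align*}
is an $\mathbf{F}_t$-martingale under $\mathbf{P}^{N^d,s,\varrho}_\phi$ for $\mathcal{E}^{N^d,s,\varrho}$-quasi every $\phi\in\overline{\Omega_{\scriptscriptstyle{d,N}}^+}$, and that $\mathbf{M}^{N^d,s,\varrho}$ is the unique (up to $\mu_{N^d,s,\varrho}$-equivalence) $\mu_{N^d,s,\varrho}$-symmetric diffusion solving this martingale problem.

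There is no substantive obstacle here: the entire content of the proof is the observation that the wetting Hamiltonian produces a density falling inside the framework developed in the previous sections. The only point worth double-checking is that $\mathbb{V}'(x,\cdot)\in L^2(\overline{\Omega_{\scriptscriptstyle{d,N}}^+};\mu_{N^d,s,\varrho})$ for every $x\in D_{\scriptscriptstyle{d,N}}$ indeed delivers the full vector field $(\partial_x\ln\varrho)_{x\in D_{\scriptscriptstyle{d,N}}}$ in $L^2$, which follows because the index set $D_{\scriptscriptstyle{d,N}}$ is finite.
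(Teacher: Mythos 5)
Your proposal is correct and follows exactly the paper's route: the paper's proof of Theorem \ref{theomartingaleapp} is precisely ``use Remark \ref{remcondapp} and apply Theorem \ref{theomartingale}'' under the identification $n=N^d$, $E=\overline{\Omega_{\scriptscriptstyle{d,N}}^+}$, $\varrho=\varrho^{\scriptscriptstyle{V,s}}_{\scriptscriptstyle{d,N}}$. Your additional verification that Condition \ref{condpotential} yields Conditions \ref{conddensity}, \ref{condHamza} and \ref{condweakdiff} (via positivity of $\varrho$, $\partial_x\ln\varrho=-\mathbb{V}'(x,\cdot)$ and finiteness of $D_{\scriptscriptstyle{d,N}}$) is a sound, slightly more explicit spelling-out of what the paper delegates to Remark \ref{remcondapp}.
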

\begin{proof}
Use Remark \ref{remcondapp} and apply Theorem \ref{theomartingale}.
\end{proof}

\begin{corollary}\label{coroskoroapp}
Suppose that Condition \ref{condpotential} is satisfied. Let $x\in D_{\scriptscriptstyle{d,N}}$. We denote by $\pi_x:\overline{\Omega_{\scriptscriptstyle{d,N}}^+}\to[0,\infty)$, $\phi\mapsto \phi_x$, the projection on the $x$-th coordinate. The process $\mathbf{M}^{N^d,s,\varrho}$ is characterized via its coordinate processes $\big(\mathbf{X}_t^{x}\big)_{t\ge 0}:=\big(\pi_x(\mathbf{X}_{t})\big)_{t\ge 0}$ by
\begin{multline}\label{repsolapp}
\mathbf{X}_{t}^{x}-\mathbf{X}_0^{x}=\mathbbm{1}_{\Omega_{\scriptscriptstyle{d,N}}^+}\big(\mathbf{X}_{t}\big)\,\sqrt{2}\,B^x_t-\int_{0}^{t}\mathbb{V}'\big(x,\mathbf{X}_{\tau}\big)\mathbbm{1}_{\Omega_{\scriptscriptstyle{d,N}}^+}\big(\mathbf{X}_{\tau}\big)\,d\tau\\
+\sum_{\varnothing\not=B\subsetneq D_{\scriptscriptstyle{d,N}}}\left\{\begin{array}{ll}
  \mathbbm{1}_{\Omega_{\scriptscriptstyle{d,N,B}}^{+\scriptscriptstyle{\text{,wet}}}}\big(\mathbf{X}_{t}\big)\,\sqrt{2}\,B^x_t-\int_0^{t} \mathbb{V}'\big(x,\mathbf{X}_{\tau}\big)\mathbbm{1}_{\Omega_{\scriptscriptstyle{d,N,B}}^{+\scriptscriptstyle{\text{,wet}}}}\big(\mathbf{X}_{\tau}\big)\,d\tau, & \text{if }x\in B\\
  \frac{1}{s}\,\int_0^{t}\mathbbm{1}_{\Omega_{\scriptscriptstyle{d,N,B}}^{+\scriptscriptstyle{\text{,wet}}}}\big(\mathbf{X}_{\tau}\big)\,d\tau, & \text{if }x\in D_{\scriptscriptstyle{d,N}}\setminus B
  \end{array}\right.\\
+\frac{1}{s}\int_0^{t}\mathbbm{1}_{\{(0,\ldots,0)\}}\big(\mathbf{X}_{\tau}\big)\,d\tau,
\end{multline}
where $(B^x_t)_{t\ge 0}$ is a one dimensional Brownian motion with $B^x_0=0$,
\begin{align*}
\mathbb{V}'(x,\phi):=\sum_{\stackunder{|x-y|_{\text{euc}}=1}{y\in{{\overline{D_{\scriptscriptstyle{d,N}}}}}}}V'\big(\phi_x-\phi_y\big),\quad\phi\in \overline{\Omega_{\scriptscriptstyle{d,N}}^+},
\end{align*}
with pair interaction potential $V$.
\end{corollary}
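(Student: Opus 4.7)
The plan is to reduce the statement directly to Corollary \ref{coroskoro} via the identification of the wetting model with the general framework of Section \ref{sectfuana}. Concretely, set $n=N^d$ and identify $E=[0,\infty)^n$ with $\overline{\Omega_{\scriptscriptstyle{d,N}}^+}=[0,\infty)^{D_{\scriptscriptstyle{d,N}}}$ via any bijection $I_{N^d}\cong D_{\scriptscriptstyle{d,N}}$, mapping the index $j$ to the site $x\in D_{\scriptscriptstyle{d,N}}$, the coordinate $\pi_j$ to $\pi_x$, the subsets $B\subset I_{N^d}$ to subsets of $D_{\scriptscriptstyle{d,N}}$, and the boundary strata $E_+(B)$ to $\Omega_{\scriptscriptstyle{d,N,B}}^{+\scriptscriptstyle{\text{,wet}}}$. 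Under this dictionary the density $\varrho=\varrho^{\scriptscriptstyle{V,s}}_{\scriptscriptstyle{d,N}}$ of (\ref{repmeasure}) plays the role of the density in the general framework, and Remark \ref{remcondapp} ensures that Conditions \ref{conddensity}, \ref{condHamza} and \ref{condweakdiff} are all satisfied, so that Corollary \ref{coroskoro} is applicable to the diffusion $\mathbf{M}^{N^d,s,\varrho}$ constructed in Theorem \ref{theoprocessapp}.

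Next I would compute the logarithmic derivative of $\varrho$ explicitly to match the drift appearing in (\ref{rep}) with the $-\mathbb{V}'$ drift appearing in (\ref{repsolapp}). Since $\varrho(\phi)=\frac{1}{Z_{\scriptscriptstyle{d,N}}^{\scriptscriptstyle{V,s}}}\exp\bigl(-H^{\scriptscriptstyle{V}}_{\scriptscriptstyle{d,N}}(\phi)\bigr)$ with $H^{\scriptscriptstyle{V}}_{\scriptscriptstyle{d,N}}(\phi)=\tfrac{1}{2}\sum_{|x-y|_{\text{euc}}=1,\,x,y\in\overline{D_{\scriptscriptstyle{d,N}}}}V(\phi_x-\phi_y)$, using the symmetry $V(-r)=V(r)$ (hence $V'$ is odd) one obtains
\begin{align*}
\partial_x\ln\varrho(\phi)=-\partial_x H^{\scriptscriptstyle{V}}_{\scriptscriptstyle{d,N}}(\phi)=-\!\!\sum_{\stackunder{|x-y|_{\text{euc}}=1}{y\in\overline{D_{\scriptscriptstyle{d,N}}}}}\!\!V'(\phi_x-\phi_y)=-\mathbb{V}'(x,\phi),\quad x\in D_{\scriptscriptstyle{d,N}}.
\end{align*}
The factor of $\tfrac{1}{2}$ is absorbed because differentiation of the symmetric sum picks up the two neighbouring pairs $(x,y)$ and $(y,x)$ contributing identical terms. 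The vanishing of $\phi_y$ on $\partial D_{\scriptscriptstyle{d,N}}$ is encoded in the extension convention $\phi_y=0$ for $y\in\partial D_{\scriptscriptstyle{d,N}}$, which is consistent with Condition \ref{condpotential} and does not change the form of $\mathbb{V}'$.

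Substituting this identity into each drift term of the representation (\ref{rep}) from Corollary \ref{coroskoro} and rewriting the indexing in terms of the sites $x\in D_{\scriptscriptstyle{d,N}}$ and subsets $B\subset D_{\scriptscriptstyle{d,N}}$, while using $\mathring{E}\leftrightarrow \Omega^+_{\scriptscriptstyle{d,N}}$ and $E_+(B)\leftrightarrow\Omega_{\scriptscriptstyle{d,N,B}}^{+\scriptscriptstyle{\text{,wet}}}$, yields precisely formula (\ref{repsolapp}), with the Brownian motions $(B^j_t)_{t\ge 0}$ relabelled as $(B^x_t)_{t\ge 0}$.

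The only potential obstacle is the verification that the differentiation of $H^{\scriptscriptstyle{V}}_{\scriptscriptstyle{d,N}}$ with boundary sites treated as sources of $0$-valued neighbours interacts correctly with the definition of $\mathbb{V}'(x,\phi)$, and that the domain of allowed test functions is preserved under the identification $E\cong\overline{\Omega_{\scriptscriptstyle{d,N}}^+}$; both points are purely bookkeeping and follow from the conventions fixed at the beginning of the section. All remaining ingredients, namely the martingale part, the quadratic variation identification via L\'evy's characterisation, and the localisation via the stopping times $\tau_k$, are inherited verbatim from the proof of Corollary \ref{coroskoro} and require no adaptation.
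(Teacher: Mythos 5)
Your proposal is correct and follows exactly the paper's route: the paper's proof of Corollary \ref{coroskoroapp} is simply ``use Remark \ref{remcondapp} and apply Corollary \ref{coroskoro}'' with $n=N^d$, and your explicit computation $\partial_x\ln\varrho=-\mathbb{V}'(x,\cdot)$ (with the factor $\tfrac12$ cancelled by the oddness of $V'$) is precisely the identification the paper leaves implicit in its definitions. Nothing further is needed.
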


\begin{proof}
Use Remark \ref{remcondapp} and apply Corollary \ref{coroskoro}.
\end{proof}

\begin{remark}\label{remskoroapp}
For $\varnothing\not=B\subsetneq D_{\scriptscriptstyle{d,N}}$ we have by Remark \ref{remBM} that on the boundary parts $\Omega_{\scriptscriptstyle{d,N,B}}^{+\scriptscriptstyle{\text{,wet}}}$ of $\partial \big(\overline{\Omega_{\scriptscriptstyle{d,N}}^+}\big)$ the constructed process is a $(\#B)$-dimensional Brownian motion scaled by $\sqrt{2}$ when it spends time there. 
\end{remark}

\begin{remark}
(\ref{repsolapp}) provides a weak solution to (\ref{sde}) in the sense of N.~Ikeda and Sh.~Watanabe (see e.g.~\cite[Chap.~2]{WaIk89}) for $\mathcal{E}^{N^d,s,\varrho}$-quasi every starting point in $\overline{\Omega_{\scriptscriptstyle{d,N}}^+}$.
\end{remark}
\subsection*{Acknowledgment}
We thank Benedikt Heinrich, Tobias Kuna, Michael R\"ockner and Heinrich von Weizs\"acker for discussions and helpful comments. Financial support through the DFG project GR 1809/8-1  is
gratefully acknowledged.


\begin{thebibliography}{BDG01}

\bibitem[AR95]{AR95}
S.~Albeverio and M.~R{\"o}ckner.
\newblock Dirichlet form methods for uniqueness of martingale problems and
  applications.
\newblock In {\em Stochastic analysis ({I}thaca, {NY}, 1993)}, volume~57 of
  {\em Proc. Sympos. Pure Math.}, pages 513--528. Amer. Math. Soc., Providence,
  RI, 1995.

\bibitem[Bau81]{Bau81}
H.~Bauer.
\newblock {\em Probability theory and elements of measure theory}.
\newblock Academic Press Inc. [Harcourt Brace Jovanovich Publishers], London,
  1981.

\bibitem[BDG01]{BDG01}
E.~Bolthausen, J.-D. Deuschel, and G.~Giacomin.
\newblock Entropic repulsion and the maximum of the two-dimensional harmonic
  crystal.
\newblock {\em Ann. Probab.}, 29(4):1670--1692, 2001.

\bibitem[CV00]{CaVe00}
P.~Caputo and Y.~Velenik.
\newblock A note on wetting transition for gradient fields.
\newblock {\em Stochastic Process. Appl.}, 87(1):107--113, 2000.

\bibitem[DG00]{DeuGia00}
J.-D. Deuschel and G.~Giacomin.
\newblock Entropic repulsion for massless fields.
\newblock {\em Stochastic Process. Appl.}, 89(2):333--354, 2000.

\bibitem[DGZ05]{DGZ05}
J.-D. Deuschel, G.~Giacomin, and L.~Zambotti.
\newblock Scaling limits of equilibrium wetting models in {$(1+1)$}-dimension.
\newblock {\em Probab. Theory Related Fields}, 132(4):471--500, 2005.

\bibitem[DN07]{DeuNis07}
J.-D. Deuschel and T.~Nishikawa.
\newblock The dynamic of entropic repulsion.
\newblock {\em Stochastic Process. Appl.}, 117(5):575--595, 2007.

\bibitem[Dur96]{Dur96}
R.~Durrett.
\newblock {\em Stochastic calculus}.
\newblock Probability and Stochastics Series. CRC Press, Boca Raton, FL, 1996.
\newblock A practical introduction.

\bibitem[FO01]{FuOl01}
T.~Funaki and S.~Olla.
\newblock Fluctuations for {$\nabla\phi$} interface model on a wall.
\newblock {\em Stochastic Process. Appl.}, 94(1):1--27, 2001.

\bibitem[F{\=O}T94]{FOT94}
M.~Fukushima, Y.~{\=O}shima, and M.~Takeda.
\newblock {\em Dirichlet forms and symmetric {M}arkov processes}, volume~19 of
  {\em de Gruyter Studies in Mathematics}.
\newblock Walter de Gruyter \& Co., Berlin, 1994.

\bibitem[FS97]{FuSpo97}
T.~Funaki and H.~Spohn.
\newblock Motion by mean curvature from the {G}inzburg-{L}andau {$\nabla \phi$}
  interface model.
\newblock {\em Comm. Math. Phys.}, 185(1):1--36, 1997.

\bibitem[Fun03]{Fu03}
T.~Funaki.
\newblock Hydrodynamic limit for {$\nabla\phi$} interface model on a wall.
\newblock {\em Probab. Theory Related Fields}, 126(2):155--183, 2003.

\bibitem[Fun05]{Fu05}
T.~Funaki.
\newblock Stochastic interface models.
\newblock In {\em Lectures on probability theory and statistics}, volume 1869
  of {\em Lecture Notes in Math.}, pages 103--274. Springer, Berlin, 2005.

\bibitem[Gia02]{Ga02}
G.~Giacomin.
\newblock Limit theorems for random interface models of {G}inzburg-{L}andau
  {$\nabla\phi$} type.
\newblock In {\em Stochastic partial differential equations and applications
  ({T}rento, 2002)}, volume 227 of {\em Lecture Notes in Pure and Appl. Math.},
  pages 235--253. Dekker, New York, 2002.

\bibitem[IW89]{WaIk89}
N.~Ikeda and Sh. Watanabe.
\newblock {\em Stochastic differential equations and diffusion processes},
  volume~24 of {\em North-Holland Mathematical Library}.
\newblock North-Holland Publishing Co., Amsterdam, second edition, 1989.

\bibitem[MR92]{MR92}
Z.~M. Ma and M.~R{\"o}ckner.
\newblock {\em Introduction to the theory of (nonsymmetric) {D}irichlet forms}.
\newblock Universitext. Springer-Verlag, Berlin, 1992.

\bibitem[Sim63]{Sim63}
G.~F. Simmons.
\newblock {\em Introduction to topology and modern analysis}.
\newblock McGraw-Hill Book Co., Inc., New York, 1963.

\bibitem[Tru03]{Tru03}
G.~Trutnau.
\newblock Skorokhod decomposition of reflected diffusions on bounded
  {L}ipschitz domains with singular non-reflection part.
\newblock {\em Probab. Theory Related Fields}, 127(4):455--495, 2003.

\bibitem[VV03]{VoVo03}
H.~Vogt and J.~Voigt.
\newblock Wentzell boundary conditions in the context of {D}irichlet forms.
\newblock {\em Adv. Differential Equations}, 8(7):821--842, 2003.

\bibitem[Zam04]{Za04}
L.~Zambotti.
\newblock Fluctuations for a {$\nabla\phi$} interface model with repulsion from
  a wall.
\newblock {\em Probab. Theory Related Fields}, 129(3):315--339, 2004.

\end{thebibliography}
\end{document}